\newcommand\reallywidehat[1]{%
\savestack{\tmpbox}{\stretchto{%
  \scaleto{%
    \scalerel*[\widthof{\ensuremath{#1}}]{\kern-.6pt\bigwedge\kern-.6pt}%
    {\rule[-\textheight/2]{1ex}{\textheight}}
  }{\textheight}%
}{0.5ex}}%
\stackon[1pt]{#1}{\tmpbox}%
}
\definecolor{myred}{rgb}{0.75,0,0}
\definecolor{mygreen}{rgb}{0,0.5,0}
\definecolor{myblue}{rgb}{0,0,0.65}
\theoremstyle{plain}
\newtheorem{theorem}[subsubsection]{Theorem}
\newtheorem{proposition}[subsubsection]{Proposition}
\newtheorem{prop/constr}[subsubsection]{Proposition/Construction}
\newtheorem{lemma}[subsubsection]{Lemma}
\newtheorem{corollary}[subsubsection]{Corollary}
\theoremstyle{definition}
\newtheorem{definition}[subsubsection]{Definition}
\newtheorem{remark}[subsubsection]{Remark}
\newtheorem{conjecture}[subsubsection]{Conjecture}
\newtheorem*{claim*}{Claim}
\theoremstyle{remark}
\numberwithin{equation}{section}
\newcommand\nc{\newcommand}
\nc\on{\operatorname}
\nc\renc{\renewcommand}
\DeclareMathOperator\Hom{Hom}
\DeclareMathOperator\SL{SL}
\DeclareMathOperator\PGL{PGL}
\DeclareMathOperator\tr{tr}
\newcommand{\Qbar}{\overline{\mathbb{Q}}}
\newcommand{\Zbar}{\overline{\mathbb{Z}}}
\title{Density of integral points in the Betti moduli of quasi-projective varieties}
\author{Simone Coccia and Daniel Litt}
\date{\today}
\begin{document}

\begin{abstract}
	Let $Y$ be a smooth quasi-projective complex variety equipped with a simple normal crossings compactification. We show that integral points are potentially dense in the (relative) character varieties parametrizing   $\SL_2$-local systems on $Y$ with fixed algebraic integer traces along the boundary components. The proof proceeds by using work of Corlette-Simpson to reduce to the case of Riemann surfaces, where we produce an integral point with Zariski-dense orbit under the mapping class group.
\end{abstract}

\maketitle


\section{Introduction}\label{section:introduction}
\subsection{Conjecture and main result}
 Let $Y$ be a smooth complex variety equipped with a smooth projective simple normal crossings compactification $\overline{Y}$, with $D=\overline{Y}\setminus Y$. Given a commutative ring $R$ and an affine algebraic group $G/R$, the \emph{$G$-representation variety} $$\on{Hom}(\pi_1(Y), G)$$ is the affine $R$-scheme whose $S$-points for an $R$-algebra $S$ are $$\on{Hom}(\pi_1(Y), G)(S):=\on{Hom}(\pi_1(Y), G(S)).$$ The \emph{$G$-character variety} of $Y$ is the (categorical) quotient $$X_G(Y):=\on{Hom}(\pi_1(Y), G)/G,$$ where $G$ acts by conjugation.
 
 For each component $D_i$ of $D$, $i=1, \cdots ,n$, fix a small loop $\gamma_i$ around $D_i$ and an $R$-point $C_i$ in the adjoint quotient $(G/_{\text{ad}}G)(R)$. There is a natural map $$p_D: X_G(Y)\to (G/_{\text{ad}}G)^n$$ induced by the map $$\rho\mapsto (\rho(\gamma_i))_{i=1, \cdots, n}.$$ Setting $\underline{C}=(C_i)_{i=1,\cdots, n}$ we define the \emph{relative character variety} $$X_{G,\underline{C}}(Y):=p_D^{-1}(\underline{C}).$$ For example, if $G=\SL_2$, $X_{G,\underline{C}}(Y)$ parametrizes conjugacy classes of representations $\rho$ of $\pi_1(Y)$ into  $\SL_2$ with $\on{tr}(\rho(\gamma_i))$ fixed.
 
 The goal of this paper is to provide some evidence for the following conjecture.
\begin{conjecture}\label{conj:main-conjecture}
	Let $G$ be a Chevalley group over $\mathbb{Z}$, $K$ a number field, and $\mathscr{O}_K$ the ring of integers of $K$. Fix $\underline{C}\in (G/_{\text{ad}}G)(\mathscr{O}_K)^n$. Then integral points are potentially Zariski-dense in the $K$-scheme $X_{G, \underline{C}}(Y)_K.$ That is, there exists a finite extension $L/K$ such that the Zariski-closure of the $\mathscr{O}_L$-points of $X_{G, \underline{C}}(Y)$ contains $X_{G, \underline{C}}(Y)_K.$
\end{conjecture}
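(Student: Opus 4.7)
The plan is to implement the two-step strategy signalled in the abstract for $G = \SL_2$: reduce from arbitrary smooth quasi-projective $Y$ to the case of a punctured Riemann surface via work of Corlette--Simpson, and then, on a curve, produce a single integral point whose mapping class group orbit is Zariski-dense in the relevant component of the relative character variety.

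For the reduction step I would choose a smooth curve $C \subset \overline{Y}$ (for instance a complete intersection of very general hyperplanes) meeting $D$ transversally, so that a Lefschetz-type theorem for fundamental groups yields a surjection $\pi_1(C \setminus (C \cap D)) \twoheadrightarrow \pi_1(Y)$, and hence a closed embedding of character varieties
\[
X_{\SL_2, \underline{C}}(Y) \hookrightarrow X_{\SL_2, \underline{C}'}(C \setminus (C \cap D)),
\]
where $\underline{C}'$ records the induced conjugacy classes at the new punctures. The image of this embedding is cut out by the condition that a local system on $C \setminus (C \cap D)$ extends to one on $Y$. Here Corlette--Simpson's non-abelian Hodge-theoretic constraints (factorization through curves, structure of rigid local systems, Higgs-bundle theory) should let one control the irreducible components of the image well enough that potential density of integral points on the curve character variety forces potential density on the $Y$-subvariety.

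Once the problem is on a punctured Riemann surface, I would use Fricke coordinates to realize $X_{\SL_2, \underline{C}'}$ as an explicit affine scheme over $\mathbb{Z}$, and write down by hand an integral representation $\rho_0$ with the prescribed boundary traces. The mapping class group $\on{Mod}(C \setminus (C \cap D))$ acts on this character variety by polynomial automorphisms defined over $\mathbb{Z}$ (through the classical formulas for Dehn twists in trace coordinates), hence preserves integrality; so once a finite base change $L/K$ is allowed to absorb the algebraic integers $\on{tr}(C_i)$, the orbit of $\rho_0$ consists of $\mathscr{O}_L$-points.

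The main obstacle is showing that this orbit is Zariski-dense. Building on work of Goldman and Previte--Xia on the dynamics of $\on{Mod}$ on character varieties, one would choose $\rho_0$ to have Zariski-dense image in $\SL_2$ and to behave generically at each puncture, so as to rule out inclusion in any of the known $\on{Mod}$-invariant proper closed subvarieties (representations with finite image, reducible representations, those factoring through a Fuchsian subgroup, etc.). A careful analysis of the (almost unipotent) action of individual Dehn twists on Fricke coordinates, together with an irreducibility statement for the relevant component of $X_{\SL_2, \underline{C}'}$, should then close the argument. Propagating this density back to $X_{\SL_2, \underline{C}}(Y)$ via the embedding above completes the proof in the $\SL_2$ case; the extension to general Chevalley $G$ would require both a higher-rank analogue of the curve construction and a substantially more delicate study of mapping class group dynamics.
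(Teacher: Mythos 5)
The core gap is in your reduction to curves. Taking a Lefschetz curve section $C \subset \overline{Y}$ meeting $D$ transversally does give a surjection $\pi_1(C \setminus (C \cap D)) \twoheadrightarrow \pi_1(Y)$ and hence a \emph{closed embedding} $X_{\SL_2, \underline{C}}(Y) \hookrightarrow X_{\SL_2, \underline{C}'}(C \setminus (C \cap D))$, but this is the wrong direction to transport density of integral points. Density of integral points on the ambient curve character variety says nothing about density on a closed subvariety of positive codimension, and worse, the mapping class group of $C \setminus (C\cap D)$ does \emph{not} preserve the subvariety $X_{\SL_2, \underline{C}}(Y)$, so the dynamical argument you want to propagate back has no way to stay inside the locus of local systems that extend to $Y$. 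Your parenthetical hope that Corlette--Simpson's ``factorization through curves'' will let one ``control the irreducible components of the image'' is precisely the hard point, and it is not addressed.

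The paper's reduction is different in a crucial way: it applies the Corlette--Simpson (and Loray--Pereira--Touzet) classification \emph{directly} to the generic representation $\rho$ of a positive-dimensional component $W$ of $X_{G,\underline{C}}(Y)$ to get a fibration $f \colon Y \to Z$ over an orbicurve with $\rho$ pulled back from $Z$. This yields a map going the \emph{other} way, $f^* \colon X_{G,\underline{C}'}(Z) \to X_{G,\underline{C}}(Y)$, whose image contains $[\rho]$; after checking $\underline{C}'$ can be chosen integral (the paper's \autoref{lem:integral-conjugacy-class}), density of integral points on $X_{G,\underline{C}'}(Z)$ pushes forward to show $[\rho]$ lies in the closure of integral points, and $[\rho]$ being the generic point of $W$ finishes the argument. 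Zero-dimensional components with quasi-unipotent local monodromy are instead handled by Corlette--Simpson's integrality theorem for rigid local systems, and components with non-Zariski-dense image need a separate argument---neither of which appears in your sketch. On the curve side, your plan (explicit integral point, MCG action over $\mathbb{Z}$, density via dynamics) is broadly in line with what the paper does, though the paper organizes the density argument around Whang's notion of perfect fibers of a pants-decomposition trace map rather than by ruling out invariant subvarieties in the Goldman/Previte--Xia style, and the construction of the seed point requires a delicate inductive gluing over pants decompositions rather than a direct Fricke-coordinate choice. Finally, the statement is a conjecture and the paper proves it only for $\SL_2$ and $\PGL_2$; your closing caveat on general Chevalley $G$ is consistent with that.
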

Recall that here a Chevalley group is a smooth affine group scheme over $\mathbb{Z}$ with connected reductive fibers, admitting a fiberwise maximal $\mathbb{Z}$-torus; for example,  $\SL_{n, \mathbb{Z}},\PGL_{n, \mathbb{Z}}$, $\on{Sp}_{2n, \mathbb{Z}}$ are Chevalley. Some version of this conjecture has been considered by a number of people; for example see \cite[Question 5.4.3(2)]{litt2024motives}.

In this paper we verify \autoref{conj:main-conjecture} for $G=\SL_2,\PGL_2$:

\begin{theorem}\label{thm:main-theorem}
	Let $G=\SL_{2, \mathbb{Z}}$ or  $\PGL_{2,\mathbb{Z}}$. Fix a number field $K$ and $\underline{C}\in (G/_{\text{ad}}G)(\mathscr{O}_K)^n.$ Then integral points are potentially dense in $X_{G,\underline{C}}(Y).$
\end{theorem}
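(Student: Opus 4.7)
My plan follows the strategy suggested in the abstract: first reduce the general case to orbicurves via Corlette-Simpson's classification of rank $2$ local systems, and then handle the orbicurve case by exhibiting a single integral point whose orbit under the mapping class group is Zariski-dense in its component.

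\textbf{Reduction to orbicurves.} Decompose $X_{G,\underline{C}}(Y)$ into its irreducible components. The components parametrizing representations whose image lies in a Borel are essentially finite quotients of products of rank $1$ character varieties, hence twists of tori, on which potential density of integral points is classical. On the remaining components the generic representation is projectively Zariski-dense, and the Corlette-Simpson theorem on rank $2$ local systems of a smooth quasi-projective variety says that such a representation is either rigid --- in which case the component is a single $\overline{\mathbb{Q}}$-point, integral after enlarging $K$ --- or projectively factors through a morphism $f\colon Y \to \mathcal{C}$ to a hyperbolic orbicurve. In the latter case pullback along $f$ defines a morphism $f^{*}\colon X_{G,\underline{D}}(\mathcal{C}) \to X_{G,\underline{C}}(Y)$ of $\mathbb{Z}$-schemes (after absorbing a $Z(G)$-twist into the base), where $\underline{D}$ is determined by the images of the $\gamma_i$ under $f_{*}$, and its image is Zariski-dense in the given component. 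Because $f^{*}$ sends integral points to integral points, the problem reduces to potential density of integral points in $X_{G,\underline{D}}(\mathcal{C})$ for orbicurves.

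\textbf{The orbicurve case.} The mapping class group $\Mod(\mathcal{C})$ acts on $X_{G,\underline{D}}(\mathcal{C})$ by $\mathscr{O}_L$-scheme automorphisms (through its action on $\pi_1(\mathcal{C})$ as outer automorphisms, preserving the conjugacy class of each boundary loop), so the $\Mod(\mathcal{C})$-orbit of an integral representation is integral. It therefore suffices, on each geometric component of $X_{G,\underline{D}}(\mathcal{C})$, to produce a single integral representation with the prescribed boundary traces whose $\Mod(\mathcal{C})$-orbit is Zariski-dense. Exhibiting an integral representation with given algebraic-integer boundary traces is a short linear-algebra argument (using the presentation of $\pi_1(\mathcal{C})$ and the freeness obtained after removing a puncture).

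\textbf{The main obstacle.} The hardest step is the Zariski density of the $\Mod(\mathcal{C})$-orbit. The base cases are the four-punctured sphere and the once-punctured torus, whose relative character varieties are explicit affine Fricke--Markov cubic surfaces and on which $\Mod(\mathcal{C})$ acts by a group generated by Vieta-type polynomial involutions. The technical heart of the proof is therefore the statement that, for every choice of algebraic-integer boundary data, there exists an $\mathscr{O}_L$-point of the Fricke cubic whose forward orbit under these involutions is Zariski-dense --- an arithmetic-dynamical question in the spirit of the dynamics of Markov triples, to be attacked by choosing the starting point so that no non-trivial polynomial relation can survive iteration. Once the base cases are handled, higher-complexity orbicurves follow by restricting to such essential subsurfaces, and the main theorem follows from the Corlette-Simpson reduction above.
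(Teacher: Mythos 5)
Your proposal follows the same overall strategy as the paper: reduce to orbicurves via the classification of rank-two quasi-projective local systems, treat non-Zariski-dense representations separately, and on curves exhibit an integral point with Zariski-dense pure mapping class group orbit, with the Fricke--Markov cubics for $\Sigma_{0,4}$ and $\Sigma_{1,1}$ as the key base cases. However, there are a few gaps worth flagging.

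First, you cite only Corlette--Simpson, but that classification requires the boundary monodromy to be quasi-unipotent, whereas \autoref{thm:main-theorem} allows arbitrary $\underline{C}\in (G/_{\text{ad}}G)(\mathscr{O}_K)^n$. For non-quasi-unipotent boundary data the factorization through an orbicurve requires the Loray--Pereira--Touzet extension \cite{loray2016representations}; without it the reduction does not go through. Second, in the non-dense case you only treat representations into a Borel, but there is a second maximal Zariski-closed class, namely representations into the normalizer of a maximal torus (dihedral representations); these do not factor through $\pi_1(Y)^{\mathrm{ab}}$ directly and require an argument via an \'etale double cover and induction, as in \autoref{lemma:non-dense-reps}. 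Third, the orbicurve factorization you quote is only projective; for $\SL_2$ you wave at ``absorbing a $Z(G)$-twist into the base,'' but the clean route (and the one the paper takes) is to prove the $\PGL_2$ statement first and then lift integral $\PGL_2$-points to integral $\SL_2$-points after a controlled finite extension (\autoref{lemma:lifting-reps}). Related to this, for $\PGL_2$ of a closed surface, lifting a $\PGL_2$-representation either lands in $X_{\SL_2}(\Sigma)$ or in the moduli space $X_{g,-I}$ of $\SL_2$-local systems on the once-punctured surface with monodromy $-I$ at the puncture, and density on $X_{g,-I}$ needs its own argument (\autoref{theorem:pgl2-density-minus-id}); your sketch does not acknowledge this twisted sector. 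Finally, on the dynamical heart: saying one should pick a starting point ``so that no non-trivial polynomial relation survives iteration'' is the right instinct, but the workable mechanism is Whang's parametrization of perfect pants-fibers by $\mathbb{G}_m^{3g-3+n}$ on which the Dehn-twist action is by multiplication by eigenvalues of infinite multiplicative order, together with the $\mathcal{P}$-good notion that makes this inductive over subsurfaces---a fixed essential subsurface of type $(0,4)$ or $(1,1)$ alone is not enough to propagate density to all of $X_{g,n,k}$.
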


The proof proceeds by reduction to the case of curves, relying on Corlette-Simpson's and Loray-Pereira-Touzet's classification of rank $2$ local systems on quasi-projective varieties \cite{corlette2008classification, loray2016representations}. We handle the case where $Y$ is a curve (say, of genus $g$ with $n\geq 0$ punctures) by constructing, for every $\underline{C}$, an integral representation whose orbit under the pure mapping class group of a surface of genus $g$ with $n$ punctures is Zariski-dense in $X_{G, \underline{C}}(Y)$. As the action of the mapping class group preserves integrality, this suffices. 

Our results on Zariski-density of integral points in (relative) character varieties of surface groups are \autoref{theorem:main-theorem-sl2} and  \autoref{theorem:pgl2-density}. In particular, we show that if $K$ is the field of definition of $\underline{C}$, then there exists a degree $4$ extension $L/K$ such that $\mathscr{O}_L$-points are Zariski-dense in the relative $\SL_2$-character variety of a curve of genus $g$ with $n$ punctures; see \autoref{rem:degree-4}. Some such field extension is necessary; see \autoref{rem:extension-necessary}.

\subsection{Motivation and related work}
The primary antecedent to \autoref{conj:main-conjecture} is Simpson's conjecture on integrality of rigid local systems \cite[p.~9]{simpson1992higgs}, which is precisely the statement that integral points are Zariski-dense in $0$-dimensional components of $X_{G, \underline{C}}(Y)$, at least when $Y$ is projective. Even this case and its quasi-projective variant is open, though beautiful work of Esnault-Groechenig (in the case $G=GL_n$) \cite{esnault2018cohomologically} and Klevdal-Patrikis (for general $G$) \cite{klevdal-patrikis} prove that \emph{reduced} isolated points of $X_{G, \underline{C}}(Y)$ are integral, for $\underline{C}$ quasi-unipotent. De Jong-Esnault \cite{de2024integrality} show that, if non-empty, $X_G(Y)$ has a $\overline{\mathbb{Z}_\ell}$-point for every $\ell$ (and much more); this would evidently be a consequence of \autoref{conj:main-conjecture}. Under mild hypotheses they show $X_G(Y)$ has a $\overline{\mathbb{Z}}$-point. All of these results rely on the existence of \emph{$\ell$-adic companions}, due to Lafforgue \cite{lafforgue2002chtoucas} in dimension one and Drinfeld \cite{drinfeld2012conjecture} in general, ultimately relying on Lafforgue's work on the Langlands program for function fields over finite fields.

Why might one believe \autoref{conj:main-conjecture}? Aside from the fact that it generalizes Simpson's conjecture to positive-dimensional components of $X_{G, \underline{C}}(Y)$, it is also motivated by a conjecture of Campana \cite[Conjecture $13.23$]{campana2011special} predicting which varieties should have a potentially Zariski-dense set of $S$-integral points. One particular instance of such conjectures is that log Calabi-Yau varieties\footnote{A variety $Z$ is log Calabi-Yau if it admits a normal projective compactification $X$ with reduced boundary divisor $D$ such that $K_X+D\sim 0$.} admit an integral model with a Zariski-dense set of integral points (see \cite[Théorème $7.7$]{campana2011special}).
An expectation attributed to Kontsevich-Soibelman is that in many cases character varieties are \lq\lq cluster varieties\rq\rq{}, hence log Calabi-Yau (see the discussion after Conjecture $5$ of \cite{simpson2015harmonic}). Campana's conjecture then predicts that they should have a Zariski dense set of integral points. Whang \cite[Theorem 1.1]{whang2020global} has proven that relative  $\SL_2$-character varieties of surfaces are log Calabi-Yau, so that our \autoref{theorem:main-theorem-sl2} answers positively Campana's conjecture for such varieties. We remark that our result is stronger than the expectation of Campana's conjecture, as we prove potential density of integral points rather than $S$-integral ones.

Our results on Zariski-density of integral points for surface groups are closely related to the study of mapping class group dynamics on (relative) character varieties; indeed, we prove density by finding integral points with Zariski-dense mapping class group orbit. We rely on the study of the geometry of character varieties from \cite{whang:ant}. Recently Golsefidy-Tamam \cite{golsefidy-complete} (see also \cite{golsefidy2025closure} for a summary of results) have closely studied Zariski-density of mapping class group orbits in character varieties of surfaces; we expect we could have used their results for our purposes as well, though we have opted for a more self-contained exposition. In general, dynamics of mapping class groups on character varieties has been studied from a number of points of view by Goldman \cite{goldman2005mapping, goldman2009ergodicity}, Previte-Xia \cite{previte2000topological, previte2002topological}, and others, including the second-named author and collaborators \cite{lam2023finite, landesman2024canonical}.

Arithmetic aspects of $\SL_2$-character varieties of surfaces have recently been studied in the work of several authors. For instance, strong approximation results for surfaces of Markoff type (which are relative character varieties of the projective line with four punctures) have been established in the work of Bourgain-Gamburd-Sarnak \cite{bourgain2016markoff} and Chen \cite{chen2024nonabelian} (see also \cite{martin2025new} for a more elementary approach to part of Chen's work). Ghosh and Sarnak \cite{ghosh2022hasse} investigated the integral Hasse principle for a family of Markoff cubic surfaces, showing (among various things) that almost all surfaces admitting a $\mb{Z}_p$ solution at all primes contain a Zariski dense set of integral points. Whang \cite{whang:ant, whang:israel-journal} obtained a structure theorem for integral points on relative $\SL_2$-character varieties of surfaces by means of mapping class group descent, and applied this to the effective determination of integral points on curves in these varieties.

The second-named author will use the potential-density results proven here for some applications to the Ekedahl--Shepherd-Barron--Taylor conjecture for isomonodromy foliations on relative moduli of flat connections, in upcoming work with Yeuk Hay Joshua Lam, building on \cite{lam2025algebraicity}. From this point of view the potential density studied here is a ``non-abelian" analogue of the integral structure on singular cohomology. See \cite[\S5]{litt2024motives} for some philosophical discussion along these lines.

\subsection{Acknowledgments}
To be added after referee process is complete.

\section{Notation}

We will use the following notation:
\begin{itemize}
	\item $\Sigma_{g,n}$ is a smooth, orientable (topological) surface of genus $g$ with $n$ punctures.
	\item $\Gamma_{g,n}$ is the pure mapping class group of $\Sigma_{g,n}$, i.e.~the component group of the group of orientation-preserving homeomorphisms of $\Sigma_{g,n}$ that fix each puncture pointwise, equipped with the compact-open topology.
	\item Given a simple closed curve $a$ in $\Sigma_{g,n}$, $\tau_a$ denotes the Dehn twist along $a$, viewed as an element of the mapping class group of $\Sigma_{g,n}$;
	\item for a set of simple closed curves $\mc A=\{a_i\}_{i \in \mc I}$ in $\Sigma_{g,n}$, we denote by $\Gamma_{\mc A}$ the subgroup of the mapping class group of $\Sigma_{g,n}$ generated by $\{\tau_a \, \vert \, a \in \mc A \}$;
	\item $\mu_{\infty} \subset \Qbar$ is the set of roots of unity, and $E:=2\Re(\mu_\infty)$ is the set of real numbers of the form $\zeta+\zeta^{-1}$, for $\zeta\in \mu_\infty$.
\end{itemize}

\section{Dynamics on relative character varieties}
In  this section we will collect some results regarding the dynamics of mapping class group actions on relative  $\SL_2$-character varieties of surface groups. Except for \autoref{proposition:zariski-dense-orbit}, the material of this section is mostly recalled from \cite{whang:ant}. Ultimately we will apply these results to prove \autoref{thm:main-theorem} in the case of algebraic curves in \autoref{sec:potential-density-sl2} (for  $\SL_2$) and \autoref{sec:pgl2} (for $\PGL_2$). We will explain how to deduce \autoref{thm:main-theorem} from this case in \autoref{sec:corlette-simpson}.

Throughout this section we set $E\subset \overline{\mathbb{Q}}$ to be the set $E:=2\Re(\mu_\infty)$, i.e.~the set of real numbers of the form $\zeta+\zeta^{-1}$ for $\zeta$ a root of unity.

\subsection{Geometry of relative character varieties}

Let $\Sigma_{g,n}$ be an orientable topological surface of genus $g$, with $n$ punctures. Set $X_{g,n}:=X_{\SL_2}(\Sigma_{g,n})$ to be the $\SL_2$-character variety of $\pi_1(\Sigma_{g,n})$. The adjoint quotient of $SL_{2, \mathbb{Z}}, (SL_{2, \mathbb{Z}}/_{\text{ad}} SL_{2, \mathbb{Z}})$ is naturally isomorphic to the affine line over $\mathbb{Z}$ via the trace map. We set $k=(k_1, \cdots, k_n)\in \mathbb{A}^n(\overline{\mb Z})$ to be a tuple of algebraic integers and set $X_{g,n, k}$ to be the relative character variety of $\Sigma_{g,n}$, parametrizing $\SL_2$-representations of $\pi_1(\Sigma_{g,n})$ with trace $k_i$ about the $i$-th puncture of $\Sigma_{g,n}$.

Let $\mathcal{P}=a_1\cup \cdots\cup a_{3g-3+n}$ be a pants decomposition of $\Sigma_{g,n}$, i.e.~a collection of $3g-3+n$ simple closed curves in $\Sigma_{g,n}$ whose complement $\Sigma\setminus \mathcal{P}$ is homeomorphic to a disjoint union of copies of $\Sigma_{0,3}$. This induces a map $\tr_{\mathcal{P}} \colon X_{g,n, k} \to \mb{A}^{3g-3+n}$ given by the traces $\tr_{a_i}$ along the paths $a_i$ of $\mathcal{P}$. For $t=(t_1, \cdots, t_{3g-3+n}) \in \mb{A}^{3g-3+n}(\overline{\mathbb{Q}})$, we denote $X^\mathcal{P}_{k,t} \coloneqq \tr_{\mc P}^{-1}(t)$. Then the subgroup $\Gamma_{\mathcal{P}}$ of the mapping class group $\Gamma_{g,n}$ of $\Sigma_{g,n}$ generated by Dehn twists $\tau_{a_i}$ about the paths $a_i$ in $\mathcal{P}$ is a free abelian subgroup of the mapping class group whose action on $X_{g,n}$ (via outer automorphisms of $\pi_1(\Sigma_{g,n})$) preserves $X^\mathcal{P}_{k,t}$.

\subsubsection{Character varieties of $\Sigma_{1,1}$ and $\Sigma_{0,4}$} \label{subsec:1104}

We first give a description of the relative character varieties of $\Sigma_{1,1}$ and $\Sigma_{0,4}$, which turn out to be affine cubic surfaces of Markoff type. We refer to \cite[\S $2.3$]{whang:ant} for more details.

We first deal with $\Sigma_{1,1}$. Let $(\alpha, \beta, \gamma)$ be an optimal sequence of generators (see \cite[\S2A1]{whang:ant}) for $\pi_1 (\Sigma_{1,1})$, where $\gamma$ is a loop around the puncture. The map $(\tr_{\alpha}, \tr_{\beta}, \tr_{\alpha\beta}) \colon X_{1,1} \to \mb A^3$ is an isomorphism. We have the identity:
\begin{equation*}
	\tr_{\gamma} = \tr_{\alpha}^2 +  \tr_{\beta}^2+  \tr_{\alpha\beta}^2 -  \tr_{\alpha} \tr_{\beta}\tr_{\alpha \beta} -2.
\end{equation*}
Then, writing $(x,y,z)=(\tr_{\alpha}, \tr_{\beta}, \tr_{\alpha\beta})$, the relative character variety $X_{1,1,k}$ is defined by the cubic equation
\begin{equation*}
	x^2+y^2+z^2-xyz-2=k.
\end{equation*}
Let us now deal with $\Sigma_{0,4}$. Let $\gamma_1, \gamma_2,\gamma_3, \gamma_4$ be an optimal sequence of generators, where each $\gamma_i$ is a loop around the corresponding puncture. Let $k=(k_1,k_2,k_3,k_4) \in \mb A^4(\mb C)$ and set $(x,y,z)=(\tr_{\gamma_{1}\gamma_{2}},\tr_{\gamma_{2}\gamma_{3}},\tr_{\gamma_{1}\gamma_{3}})$. Then $X_{0,4,k}$ is defined by the cubic equation:
\begin{equation*}
	x^{2}+y^{2}+z^{2}+xyz=Ax+By+Cz+D
\end{equation*}
where
\begin{equation*}
	\begin{cases}
		A=k_{1}k_{2}+k_{3}k_{4}\\
		B=k_{1}k_{4}+k_{2}k_{3}\\
		C=k_{1}k_{3}+k_{2}k_{4}
	\end{cases}
	\quad \text{and} \quad D=4-\sum_{i=1}^{4}k_{i}^{2}-\prod_{i=1}^{4}k_{i}.
\end{equation*}   
In both cases we have the following:
\begin{lemma}\label{lemma:dominant}
	Let $t \in \mb C$ and let $X=X_{1,1,k}$ (resp. $X=X_{0,4,k}$). Let $\pi_y \colon X \to \mb A^1$ be the projection map $\pi_y(x,y,z)=y$, which coincides with the trace map $\tr_{\beta}$ (resp. $\tr_{\gamma_2\gamma_3}$). Then the restriction of $\pi_y$ to the curve $x=t$ is dominant. 
\end{lemma}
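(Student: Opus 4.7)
The plan is to argue directly from the explicit Markoff-type cubic equations recorded in \S\ref{subsec:1104}. Fix $x = t$ and let $C_t \subset \mathbb{A}^2_{y,z}$ denote the resulting plane curve, cut out by the polynomial $F_t(y,z)$ obtained by substitution. The key observation is that in each case $F_t$ is monic of degree $2$ in the variable $z$; explicitly,
\[
F_t(y,z) = z^2 + L(y)\, z + Q(y),
\]
where $L(y) = -ty$ and $Q(y) = y^2 + t^2 - 2 - k$ in the $\Sigma_{1,1}$ case, while $L(y) = ty - C$ and $Q(y) = y^2 + t^2 - At - By - D$ in the $\Sigma_{0,4}$ case.

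From this I would draw two consequences. First, $F_t$ is a nonzero polynomial, so $C_t$ is a pure one-dimensional affine plane curve. Second, for any $y_0 \in \mathbb{C}$ the fiber of $\pi_y|_{C_t}$ over $y_0$ is the vanishing locus of the monic quadratic $z \mapsto F_t(y_0, z)$, which contains at most two points. Thus $\pi_y|_{C_t}$ has finite fibers from a one-dimensional source, so by the fiber dimension theorem its image has dimension at least one in $\mathbb{A}^1$ and is therefore Zariski dense. In fact one can conclude surjectivity on $\mathbb{C}$-points, since the quadratic in $z$ has at least one complex root for every value of $y_0$.

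There is no substantive obstacle here: the entire content of the argument is that the quadratic-in-$z$ structure of these Markoff cubics forces $\pi_y$ to be finite-to-one on $C_t$, and hence dominant. The only potential worry — that the leading coefficient in $z$ might degenerate for some special value of $t$ — is ruled out immediately by the explicit formulas, where that coefficient is the constant $1$ independently of $t$, $k$, and the other parameters.
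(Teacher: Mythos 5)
Your proof is correct. The paper in fact states Lemma \ref{lemma:dominant} without supplying any argument, leaving it as an immediate consequence of the explicit equations recorded just above it; your proof is exactly the natural argument one would give. Substituting $x=t$ into the Markoff-type cubic does produce a polynomial monic of degree $2$ in $z$ (your formulas for $L(y)$ and $Q(y)$ check out in both cases), so for every $y_0 \in \mathbb{C}$ the monic quadratic $F_t(y_0,\cdot)$ has a root $z_0$, giving a point $(y_0,z_0)$ of the curve $x=t$ mapping to $y_0$. This shows $\pi_y$ restricted to that curve is surjective on $\mathbb{C}$-points, hence dominant, and the first (fiber-dimension) half of your argument is then not even needed. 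The observation you flag — that the leading coefficient in $z$ is identically $1$, independent of $t$, $k$, $A$, $B$, $C$, $D$ — is indeed the whole content, and it is correct.
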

We also collect here the following facts:
\begin{proposition}\label{proposition:degenerate-character-varieties}
	We have that:
	\begin{itemize}
		\item the character variety of the torus $X_{1,0}$ is defined by $x^2+y^2+z^2-xyz-4=0$;
		\item there is a single $\SL_2$-representation of $\pi_1(\Sigma_{1,1})$ up to conjugacy with monodromy $-I$ at the puncture, and it corresponds to the point $(0,0,0)$ of the Markoff surface $X_{1,1,-2} \colon x^2+y^2+z^2-xyz=0$.
	\end{itemize}
\end{proposition}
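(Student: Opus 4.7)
For the first claim, I would use the trace identity $\tr_\gamma = x^2+y^2+z^2-xyz-2$ recalled immediately before the proposition for $\Sigma_{1,1}$. The natural surjection $\pi_1(\Sigma_{1,1}) \twoheadrightarrow \pi_1(\Sigma_{1,0})$ killing $\gamma$ induces a closed embedding $X_{1,0} \hookrightarrow X_{1,1}$, whose image parametrizes (semisimple) representations with $\rho(\gamma) = I$. At the level of characters this condition is equivalent to $\tr_\gamma = 2$, since a semisimple element of $\SL_2$ with trace $2$ is the identity. Substituting $\tr_\gamma = 2$ into the identity yields the equation $x^2+y^2+z^2-xyz-4 = 0$. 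Conversely, every triple on this cubic arises from a commuting pair of diagonal matrices $\rho(\alpha) = \on{diag}(\lambda, \lambda^{-1})$, $\rho(\beta) = \on{diag}(\mu, \mu^{-1})$ for suitable $\lambda, \mu$, so the image is exactly this cubic surface.

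For the second claim, suppose $\rho([\alpha,\beta]) = -I$, equivalently $\rho(\alpha)\rho(\beta)\rho(\alpha)^{-1} = -\rho(\beta)$. Then $\rho(\beta)$ and $-\rho(\beta)$ share eigenvalues, forcing $\{\mu,\mu^{-1}\} = \{-\mu,-\mu^{-1}\}$, hence $\mu^2 = -1$; in particular $\tr\rho(\beta) = 0$ and $\rho(\beta)^2 = -I$. Symmetrically, $\tr\rho(\alpha) = 0$ and $\rho(\alpha)^2 = -I$. Since $\rho(\alpha)$ is semisimple (its minimal polynomial $t^2+1$ has distinct roots), conjugate the representation so that $\rho(\alpha) = \on{diag}(i,-i)$. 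The anti-commutation $\rho(\alpha)\rho(\beta) = -\rho(\beta)\rho(\alpha)$ then forces $\rho(\beta)$ to be anti-diagonal, and the residual action of the diagonal torus in $\SL_2$ (the stabilizer of $\rho(\alpha)$) acts transitively on anti-diagonal matrices of determinant $1$. This pins $\rho$ down to a single conjugacy class, which one sees directly satisfies $\tr\rho(\alpha\beta) = 0$, giving the point $(0,0,0) \in X_{1,1,-2}$.

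The main obstacle is bookkeeping in the uniqueness argument for the second part: one must carefully check that the full residual stabilizer of the normal form $\rho(\alpha) = \on{diag}(i,-i)$ is just the diagonal torus (the Weyl element sends $\rho(\alpha)$ to $-\rho(\alpha)$ and so does not lie in the stabilizer), and that this torus acts transitively on anti-diagonal elements of determinant $1$. Both are short computations, and neither involves anything beyond explicit $2\times 2$ matrix manipulations.
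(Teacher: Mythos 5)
The paper's ``proof'' of this proposition is just a citation to \cite{munoz2024coordinate} and \cite{logares2013hodge}, so you have not deviated from any written argument in the paper; rather you have supplied a direct, self-contained argument where the authors relied on the literature. Your two arguments are correct. For the first claim, the forward direction (image of $X_{1,0}$ lands in the cubic) follows from the trace identity as you say, and the converse is cleanly verified by the observation that with $x=\lambda+\lambda^{-1}$, $y=\mu+\mu^{-1}$ the two roots in $z$ of $z^2 - xyz + (x^2+y^2-4)=0$ are exactly $\lambda\mu+\lambda^{-1}\mu^{-1}$ and $\lambda\mu^{-1}+\lambda^{-1}\mu$, so every point of the cubic is realized by a commuting diagonal pair. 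For the second claim, your deduction that $\rho(\alpha)$ and $\rho(\beta)$ are both traceless with square $-I$, and the reduction to the normal form $\rho(\alpha)=\operatorname{diag}(i,-i)$ with $\rho(\beta)$ anti-diagonal acted on transitively by the centralizer torus, is exactly the right computation and correctly pins down the unique conjugacy class as $(0,0,0)$.

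One small caveat worth being explicit about: your argument for the first bullet establishes the identification at the level of $\mathbb{C}$-points (image of $X_{1,0}(\mathbb{C})$ in $\mathbb{A}^3$ equals the closed points of the cubic). Upgrading this to a statement that the character variety $X_{1,0}$, as a scheme, \emph{is} the cubic requires knowing that $X_{1,0}$ is reduced (and irreducible of dimension two), since the commuting-pairs locus in $\SL_2\times\SL_2$ could a priori be non-reduced before taking GIT quotient. This is true and is exactly what the cited references verify, but if you want a genuinely self-contained proof of the scheme-theoretic statement you would need to address it; otherwise the argument as written only proves the set-theoretic identification. Neither issue affects how the proposition is used later in the paper.
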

\begin{proof}
	See \cite[Theorem $6.3$]{munoz2024coordinate} and \cite[Section $4.2$]{logares2013hodge}.
\end{proof}

\subsubsection{Dynamics of relative character varieties}

\begin{definition}\label{definition:perfect-fiber}
	Let $\mc P$ be a pants decomposition of $\Sigma=\Sigma_{g,n}$, $k \in \mb A^n$ and $t \in \mb A^{3g-3+n}$. We say that $X^\mathcal{P}_{k,t}$ is \emph{perfect} if
	\begin{itemize}
		\item for all $a_i\in \mc P$, we have $\tr_{a_i}(X^\mathcal{P}_{k,t}) \neq \pm 2$  and
		\item for each $[\rho]$ in $X_{k,t}^{\mathcal{P}}(\mathbb{C})$, its restriction to each component of ${\Sigma \setminus \mc P}$ is irreducible, or $(g,n,k)=(1,1,2)$.
	\end{itemize}
\end{definition}
\begin{remark}\label{rem:P-good}
Note that both conditions above are really only conditions on $t$. For the first condition this is clear; for the second, it follows as an $\SL_2$-local system on $\Sigma_{0,3}$ is determined up to semisimplification by its 	three boundary traces. In particular (see \cite[Lemma 3.3]{whang:ant}) it is irreducible unless the three boundary traces $x,y,z$ satisfy $$x^2+y^2+z^2-xyz=4.$$

Note that the set of $t$ such that $X_{k,t}^{\mathcal{P}}$ is not perfect is a proper Zariski-closed subset of $\mathbb{A}^{3g-3+n}$.
\end{remark}

Let $X^\mathcal{P}_{k,t}$ be a perfect fiber. Fix $(\lambda_1, \dots, \lambda_{3g-3+n}) \in (\Qbar^{\times})^{3g-3+n}$ such that $\lambda_i+\lambda_i^{-1}=t_i$. We denote by $T_{z_i} \colon \mb{G}_m^{3g-3+n} \to \mb{G}_m^{3g-3+n}$ the map given by multiplication of the $i$-th coordinate by $\lambda_i$. We recall the following result from \cite[Proposition $4.3$]{whang:ant}:
\begin{proposition}[Whang]\label{proposition:whang-perfect-fibers}
	If $X^\mathcal{P}_{k,t}$ be a perfect fiber, then there is a morphism
	\begin{equation*}
		F \colon \mb{G}_m^{3g-3+n} \to X^\mathcal{P}_{k,t}
	\end{equation*}
	defined over $\Qbar$ satisfying the following:
	\begin{enumerate}
		\item at the level of $\Qbar$-points, $F$ is surjective with finite fibers,
		\item the action of $T_{z_i}$ on $\mb{G}_m^{3g-3+n}$ lifts the action of the Dehn twist $\tau_{a_i}$ on $X^\mathcal{P}_{k,t}$.
	\end{enumerate}
\end{proposition}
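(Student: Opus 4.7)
The plan is to exhibit each point of $X_{k,t}^{\mathcal{P}}$ as arising from gluing canonical local systems on the pairs of pants cut out by $\mathcal{P}$. Writing $\Sigma_{g,n}\setminus \mathcal{P} = \bigsqcup_{j=1}^{2g-2+n} P_j$, each $P_j$ is a copy of $\Sigma_{0,3}$ whose three boundary components each lie in $\mathcal{P}$ or at a puncture, and so have traces determined by $(k,t)$. The classical fact that an irreducible $\SL_2$-local system on $\Sigma_{0,3}$ is determined by its boundary traces (cf.\ the discussion in \autoref{rem:P-good}) means that, under the perfection hypothesis (outside the exceptional case $(g,n,k)=(1,1,2)$), the pants traces pin down a canonical conjugacy class of local system on each $P_j$.

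Next I would parametrize the possible gluings of these canonical pants local systems along the curves of $\mathcal{P}$. For each $a_i \in \mathcal{P}$, the two monodromies along $a_i$ coming from the adjacent pants both have trace $t_i$; because $t_i \neq \pm 2$, they are conjugate regular semisimple elements of $\SL_2$, and the set of conjugating matrices is a torsor under the centralizer $T_i \cong \mathbb{G}_m$. After fixing once and for all a basepoint in each torsor and a choice of eigenline (say the $\lambda_i$-eigenspace of the monodromy), this gluing ambiguity becomes a well-defined coordinate $z_i \in \mathbb{G}_m$. Assembling these data yields the desired map $F \colon \mathbb{G}_m^{3g-3+n} \to X^\mathcal{P}_{k,t}$, defined over $\overline{\mathbb{Q}}$ because the $\lambda_i$ and all the pants data live in $\overline{\mathbb{Q}}$.

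Now I would verify (1) and (2). Surjectivity is immediate from the construction, since every local system in $X^\mathcal{P}_{k,t}$ admits some such gluing presentation. Finite fibers follow because, once the pants restrictions are fixed up to $\SL_2$-conjugation, the only remaining ambiguity comes from a finite group of ``Weyl-type" choices (swapping the two eigenlines at each $a_i$, together with the overall center $\{\pm I\}$). For (2), the geometric description of $\tau_{a_i}$ as a cut-and-reglue by the monodromy itself means that its effect on the gluing identification along $a_i$ is precomposition by the $a_i$-monodromy. In the chosen eigenbasis this is multiplication by $\lambda_i$ on the $\lambda_i$-eigenline, which translates into the action $z_i \mapsto \lambda_i z_i$, i.e.~the map $T_{z_i}$.

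The main obstacle is the exceptional case $(g,n,k)=(1,1,2)$ admitted by \autoref{definition:perfect-fiber}. In this case the unique pair of pants has boundary traces $(t_1,t_1,2)$, which always satisfies the reducibility identity $x^2+y^2+z^2-xyz=4$, so the ``canonical local system on $P_j$" step of the argument fails. Handling this case requires an ad hoc direct parametrization, for instance using the explicit identification of $X_{1,1,2}$ with the affine cubic surface from \autoref{subsec:1104} and exhibiting $F \colon \mathbb{G}_m \to X_{1,1,2}^{\mathcal{P}}$ by writing down the one-parameter family of (in general reducible) representations with $\tr_\alpha = t_1$ and verifying the Dehn-twist action by direct computation. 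A secondary technical point is showing that all the relevant choices (basepoints, eigenspaces, Weyl choices) can be made over $\overline{\mathbb{Q}}$, which is automatic since everything in sight is algebraic.
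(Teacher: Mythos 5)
The paper does not prove this proposition; it is stated as a recollection and the proof is a citation to \cite{whang:ant}. Your reconstruction follows the same pants-gluing strategy as the cited source: cut along $\mathcal{P}$, use that an irreducible $\SL_2$-local system on $\Sigma_{0,3}$ is determined up to conjugacy by its boundary traces, parametrize the regluings along each $a_i$ by the centralizer torus of the (regular semisimple) monodromy, and observe that the Dehn twist $\tau_{a_i}$ regules by the $a_i$-monodromy, which on the chosen eigenline is multiplication by $\lambda_i$. That part of the sketch is sound, modulo one bookkeeping slip: once an eigenline at each $a_i$ is fixed the Weyl ambiguity is already spent, and what actually makes the fibers of $F$ finite is that $\Aut(\rho|_{P_j})=\{\pm I\}$ for each pair of pants $P_j$, so two gluing tuples yield the same class only when they differ by a compatible family of signs — a finite, not Weyl-type, phenomenon.

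The genuine gap is the one you yourself flag and then defer: the case $(g,n,k)=(1,1,2)$, which \autoref{definition:perfect-fiber} explicitly allows, breaks the construction because the unique pair of pants always has boundary traces $(t_1,t_1,2)$ satisfying $t_1^2+t_1^2+4-2t_1^2=4$, so its restriction is never irreducible and there is no ``canonical pants local system.'' Since the proposition in this paper is invoked in exactly this case (via \autoref{lemma:zariski-dense-fiber} and \autoref{lemma:existence-integral-representation-torus}), the deferred step must actually be carried out. It is not hard: $X_{1,1,2}$ coincides with $X_{1,0}:x^2+y^2+z^2-xyz-4=0$ by \autoref{proposition:degenerate-character-varieties}, every semisimple point comes from a pair of commuting diagonal matrices $\rho(\alpha)=\mathrm{diag}(\lambda_1,\lambda_1^{-1})$, $\rho(\beta)=\mathrm{diag}(\mu,\mu^{-1})$ with $\lambda_1+\lambda_1^{-1}=t_1\neq\pm 2$, and one checks directly that
\[
\mu\ \longmapsto\ \bigl(t_1,\ \mu+\mu^{-1},\ \lambda_1\mu+\lambda_1^{-1}\mu^{-1}\bigr)
\]
gives a bijection of $\mathbb{G}_m(\Qbar)$ onto the fiber $x=t_1$ of the cubic, with $\tau_{a_1}\colon\beta\mapsto\alpha\beta$ acting as $\mu\mapsto\lambda_1\mu$. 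Until you supply (or at least reduce to) an argument like this, the proposal does not establish the proposition at the level of generality actually used in the paper.
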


Recall that $E=\{\lambda + \lambda^{-1} \, \vert \, \lambda \in \mu_{\infty}\}$, where $\mu_{\infty}$ is the set of all roots of unity. Notice that, if $K$ is a number field, then $E \cap K$ is a finite set.

\begin{lemma}\label{lemma:zariski-dense-fiber}
	Let $\mc P$ be a pants decomposition of $\Sigma$ and let $p \in X$ be a point contained in a perfect fiber $X^{\mc{P}}_{k,t}$ of $\tr_{\mc P}$. If $t \in (\mb A^1(\overline{\mathbb{Q}}) \setminus E)^{3g-3+n}$, then $\Gamma_{\mathcal{P}} \cdot p$ is Zariski dense in $X^{\mc{P}}_{k,t}$.
\end{lemma}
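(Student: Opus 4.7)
The plan is to lift the orbit question along Whang's parametrization $F$ from \autoref{proposition:whang-perfect-fibers}, reducing to an explicit density statement on a torus.

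First, since $X^{\mathcal P}_{k,t}$ is perfect, \autoref{proposition:whang-perfect-fibers} supplies a $\overline{\mathbb{Q}}$-morphism $F\colon \mathbb{G}_m^{3g-3+n}\to X^{\mathcal P}_{k,t}$ which is surjective with finite fibers and which intertwines each coordinate dilation $T_{z_i}$ with the Dehn twist $\tau_{a_i}$. Pick any $\tilde p\in F^{-1}(p)$. Because $F$ conjugates the $\mathbb{Z}^{3g-3+n}$-action generated by the $T_{z_i}$ into the $\Gamma_{\mathcal P}$-action, we have $F(\langle T_{z_1},\ldots,T_{z_{3g-3+n}}\rangle\cdot\tilde p)=\Gamma_{\mathcal P}\cdot p$.

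Second, I would argue that it suffices to prove Zariski density of the lifted orbit. Indeed, $F$ is surjective, hence in particular dominant, and it is Zariski-continuous; so if $S\subset \mathbb{G}_m^{3g-3+n}$ is dense, then $X^{\mathcal P}_{k,t}=F(\overline{S})\subseteq\overline{F(S)}$, forcing $\overline{F(S)}=X^{\mathcal P}_{k,t}$.

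Third, I would verify the lifted orbit is Zariski dense by a direct calculation on the torus. The group $\langle T_{z_1},\ldots,T_{z_{3g-3+n}}\rangle$ acts on $\mathbb{G}_m^{3g-3+n}$ coordinatewise, so the orbit of $\tilde p=(\tilde p_1,\ldots,\tilde p_{3g-3+n})$ is the product
\[
\prod_{i=1}^{3g-3+n}\bigl(\tilde p_i\cdot\{\lambda_i^k:k\in\mathbb{Z}\}\bigr).
\]
By hypothesis $t_i=\lambda_i+\lambda_i^{-1}\notin E$, so $\lambda_i$ is not a root of unity, i.e.~has infinite order in $\mathbb{G}_m$; hence each factor is Zariski dense in $\mathbb{G}_m$. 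Since the Zariski closure of a product of subsets of varieties is the product of the closures, the full orbit is Zariski dense in $\mathbb{G}_m^{3g-3+n}$, and pushing forward by $F$ yields the claim.

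I do not anticipate a serious obstacle: the geometric content is entirely absorbed into \autoref{proposition:whang-perfect-fibers}, and the remainder is the elementary fact that a non-torsion element of $\mathbb{G}_m$ generates a Zariski-dense cyclic subgroup. The only point meriting attention is checking that $F$'s equivariance converts the abstract Dehn twists into coordinate dilations, so that the $\Gamma_{\mathcal P}$-orbit genuinely corresponds to the product-shaped orbit above.
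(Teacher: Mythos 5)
Your proposal is correct and follows essentially the same route as the paper's proof: lift $p$ along Whang's parametrization $F$, use the equivariance of $F$ to identify the $\Gamma_{\mathcal P}$-orbit with the image of a $\langle T_{z_i}\rangle$-orbit on the torus, observe that each $\lambda_i$ has infinite multiplicative order (since $t_i\notin E$), and push density forward by surjectivity. The paper states these steps more tersely, but the underlying argument is the same.
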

\begin{proof}
	Since the monodromy along $a_i$ has infinite order (by the assumption that no $t_i$ lies in $E$) and $\tr(a_i) \neq \pm 2$, the eigenvalues of the monodromy along $a_i$ must have infinite multiplicative order. By \autoref{proposition:whang-perfect-fibers}, the orbit of any point of $\mb{G}_m^{3g-3+n}$ under the $\langle T_{z_i}\rangle_{i=1,\cdots, 3g-3+n}$-action lifting the $\Gamma_{\mc P}$-action on $X_{k,t}^{\mc P}$ is Zariski dense. The claim follows from the surjectivity of  $\mb{G}_m^{3g-3+n}(\Qbar) \to X^{\mc{P}}_{k,t}(\Qbar)$.
\end{proof}


\begin{definition}\label{defn:p-good}
	Given a pants decomposition $\mc P$ of $\Sigma$, we say that $[\rho] \in X_{g,n,k}(\overline{\mathbb{Q}})$ is $\mc P$-\emph{good} if $t \coloneqq \tr_{\mc P}([\rho]) \in (\mb A^1(\overline{\mathbb{Q}})\setminus E)^{3g-3+n}$ and $X^{\mc P}_{k,t}$ is a perfect fiber.
\end{definition}

The following proposition will be our main tool for showing pure mapping class group orbits are Zariski-dense in relative character varieties.

\begin{proposition}\label{proposition:zariski-dense-orbit}
	Let $\mc P$ be a pants decomposition and let $p \in X_{g,n,k}(\Qbar)$ be a $\mc P$-good point. Then $\Gamma_{g,n} \cdot p$ is Zariski dense in $X_{g,n,k, \overline{\mathbb{Q}}}$.
\end{proposition}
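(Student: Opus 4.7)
The plan is to let $Z := \overline{\Gamma_{g,n} \cdot p}$, a closed, mapping-class-group-invariant subset of $X_{g,n,k,\overline{\mathbb{Q}}}$, and to show $Z = X_{g,n,k,\overline{\mathbb{Q}}}$ by iteratively enlarging the collection of fibers (of various trace maps) contained in $Z$. By \autoref{lemma:zariski-dense-fiber}, the $\mc P$-goodness of $p$ immediately yields $Z \supseteq X^{\mc P}_{k,t}$ where $t = \tr_{\mc P}(p)$.

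The main construction is a single elementary move on the pants decomposition. For each index $i$, the subsurface $\Sigma'_i \subset \Sigma$ obtained by cutting along $\mc P \setminus \{a_i\}$ is either $\Sigma_{1,1}$ or $\Sigma_{0,4}$ (depending on whether $a_i$ separates two distinct pairs of pants of $\mc P$, or two boundary components of a single pair of pants). I would choose a simple closed curve $b_i \subset \Sigma'_i$ such that $\mc P'_i := (\mc P \setminus \{a_i\}) \cup \{b_i\}$ is again a pants decomposition of $\Sigma$, with $\tr_{a_i}$ and $\tr_{b_i}$ realizing two of the three Markoff coordinates on the cubic-surface character variety of $\Sigma'_i$ (cf.\ \autoref{subsec:1104}). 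Using \autoref{lemma:dominant} inside $\Sigma'_i$ (together with the elementary observation that representations on $\Sigma'_i$ with prescribed boundary traces extend to representations on $\Sigma$ with the remaining fixed $\mc P$-traces, since each piece of $\Sigma \setminus \Sigma'_i$ is a pair of pants determined up to conjugacy by its boundary traces), the restriction $\tr_{b_i}|_{X^{\mc P}_{k,t}}$ is dominant onto $\mathbb{A}^1$. For Zariski-generic $t'_i$ in the image---avoiding the countable set $E$, the points $\pm 2$, and the proper Zariski-closed locus where $X^{\mc P'_i}_{k,\cdot}$ fails to be perfect (cf.\ \autoref{rem:P-good})---the tuple $t^{(i)} := (t_1,\ldots,t'_i,\ldots,t_{3g-3+n})$ defines a $\mc P'_i$-good fiber. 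Picking any $q \in X^{\mc P}_{k,t} \cap \tr_{b_i}^{-1}(t'_i) \subseteq Z$ and applying \autoref{lemma:zariski-dense-fiber} with $(\mc P'_i, q)$ yields $\Gamma_{\mc P'_i} \cdot q \subseteq Z$ Zariski dense in $X^{\mc P'_i}_{k, t^{(i)}}$; by closedness of $Z$, $X^{\mc P'_i}_{k, t^{(i)}} \subseteq Z$ for a Zariski-dense set of $t'_i$.

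Iterating this construction at each coordinate in turn---at each stage choosing a good point of the previously produced fiber and performing an elementary move at a new coordinate---after $3g-3+n$ iterations $Z$ contains a family of fibers $X^{\mc P^*}_{k,s}$ (for some pants decomposition $\mc P^*$ obtained from $\mc P$ by the sequence of moves) with $s$ ranging over a Zariski-dense subset of $\mathbb{A}^{3g-3+n}$. Since each such fiber has dimension $3g-3+n$ by \autoref{proposition:whang-perfect-fibers}, $Z$ has full dimension $6g-6+2n = \dim X_{g,n,k,\overline{\mathbb{Q}}}$, and combined with the (generic) irreducibility of $X_{g,n,k,\overline{\mathbb{Q}}}$---which holds under the $\mc P$-goodness assumption---we conclude $Z = X_{g,n,k,\overline{\mathbb{Q}}}$. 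The principal obstacle is verifying at each elementary move that the genericity conditions (avoiding $E$, $\pm 2$, and the non-perfect locus) can simultaneously be met within the Zariski-dense image of $\tr_{b_i}$; this is handled by observing that these three loci are respectively countable, finite, and proper Zariski-closed in $\mathbb{A}^1$, hence negligible inside any Zariski-dense subset.
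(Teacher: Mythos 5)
Your plan shares the core ingredients with the paper's proof—\autoref{lemma:zariski-dense-fiber}, the $(1,1)$ or $(0,4)$ subsurfaces $\Sigma_i$ containing each $a_i$, the auxiliary curves $b_i$, and \autoref{lemma:dominant} to vary one trace at a time—but the execution differs, and as written it has a real gap in the inductive bookkeeping.

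The paper never changes the pants decomposition. It reduces to showing $\tr_{\mc P}(\Gamma_{g,n}\cdot p)$ is Zariski dense in $\mb A^{3g-3+n}$ and proves this by induction: $\tr_{a_1,\dots,a_m}\bigl(\Gamma_{a_1,b_1,\dots,a_m,b_m}\cdot p\bigr)$ is dense in $\mb A^m$. The step from $m$ to $m+1$ works precisely because $a_{m+1}$ and $b_{m+1}$ are disjoint from $a_1,\dots,a_m$ (this is automatic: $b_{m+1}$ lies in the interior of $\Sigma_{m+1}$, whose boundary is built from the other curves of $\mc P$), so the Dehn twists $\tau_{a_{m+1}},\tau_{b_{m+1}}$ preserve $\tr_{a_1},\dots,\tr_{a_m}$. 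You instead iterate \emph{elementary moves} on the pants decomposition ($\mc P \to \mc P'_1 \to \cdots$), and you assert that after $3g-3+n$ steps $Z$ contains fibers over a dense subset of $\mb A^{3g-3+n}$. But you never say which pants decomposition is in force at step $m$, nor whether the Dehn twists used at step $m+1$ fix the coordinates that were determined at steps $1,\dots,m$; after the first move, the subsurfaces $\Sigma'_j$ and the candidate curves $b_j$ are taken with respect to a new decomposition, and the disjointness that makes the paper's induction close up has to be re-established. Passing from ``$\tr_{b_i}$ hits a dense set for each coordinate separately'' to ``$Z$ contains fibers over a dense subset of the product'' is exactly the content of the paper's induction, and it is missing from your argument.

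A few smaller points. (i) Your appeal to the ``(generic) irreducibility of $X_{g,n,k,\overline{\mb Q}}$ \emph{which holds under the $\mc P$-goodness assumption}'' is wrong as stated: $\mc P$-goodness is a condition on the point $p$ and the fiber through it, not a global property of the character variety, so it cannot imply irreducibility; if you need irreducibility you should cite it (e.g.\ Whang). In fact, if the density of the family of fibers is established correctly, the dimension count and irreducibility are unnecessary—$Z$ is closed and contains a dense set. (ii) Your ``elementary observation'' that any representation of $\pi_1(\Sigma'_i)$ with the prescribed boundary traces extends to a representation of $\pi_1(\Sigma)$ with the remaining $\mc P$-traces is not as automatic as you suggest (one must glue along several boundary curves simultaneously, and the pants graph of $\mc P$ may have cycles when $g>0$). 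The paper sidesteps this entirely by working with the $\Gamma_{a_i}$-orbit of the restriction $p|_{\Sigma_i}$—\autoref{lemma:zariski-dense-fiber} gives an infinite orbit inside the $1$-dimensional fiber of $X(\Sigma_i)$, and then \autoref{lemma:dominant} gives infinitely many values of $\tr_{b_i}$—so the restriction map's image never needs to be controlled. (iii) Saying $E$ is ``countable, hence negligible'' is imprecise since $\mb A^1(\overline{\mb Q})$ is itself countable; the paper avoids this by fixing a number field $K$ over which all orbit points are defined and using that $E\cap K$ is finite.
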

\begin{proof}
	Let $K$ be a number field containing the fields of definition of $p$ and $X_{g,n,k}$. In particular, we have that $\Gamma_{g,n} \cdot p \subseteq X_{g,n,k}(K)$. Let $\mc P = a_1 \cup \cdots \cup a_{3g-3+n}$ be a pants decomposition of $\Sigma_{g,n}$. 
	
	It is sufficient to prove that $\tr_{\mc P}(\Gamma_{g,n}\cdot p)$ is Zariski dense in $\mb{A}^{3g-3+n}$. Suppose this is the case. The set of $t \in \mb{A}^{3g-3+n}$ such that $X^{\mc{P}}_{k,t}$ is \emph{not} perfect is a proper Zariski closed subset of $\mb{A}^{3g-3+n}$ (see \autoref{rem:P-good}), and, since $E \cap K$ is finite, the same is true for the set of $t \in \mb{A}^{3g-3+n}$ such that at least one of the coordinates of $t$ lies in $E \cap K$. Thus, if $\tr_{\mc P}(\Gamma_{g,n}\cdot p)$ is Zariski dense in $\mb{A}^{3g-3+n}$, $\tr_{\mc P}(\Gamma_{g,n}\cdot p)$ would contain a Zariski dense set of $t \in (\mb A^1 \setminus E)^{3g-3+n}$ for which $X^{\mc{P}}_{k,t}$ is perfect, so the desired Zariski-density statement for $X_{g,n,k, \overline{\mathbb{Q}}}$ would follow from \autoref{lemma:zariski-dense-fiber}.
	
	We now show that $\tr_{\mc P}(\Gamma_{g,n}\cdot p)$ is Zariski dense in $\mb{A}^{3g-3+n}$.
	
	Let $\Sigma_i$ be the surface of type $(g',n')=(0,4)$ or $(1,1)$ obtained by gluing the components of $\Sigma\setminus \mathcal{P}$ bounded by $a_i$ along $a_i$. We have a natural restriction map $X_{k,t}^{\mc P}(\Sigma) \to X_{g', n', k_i'}(\Sigma_i)$, where $k_i'$ is the vector of traces naturally induced on the boundary of $\Sigma_i$.
 Moreover, the restriction of $p$ belongs to $X_{g', n', k_i'}(\Sigma_i)$ and is $\{a_i\}$-good with respect to the pants decomposition $\{a_i\}$ of $\Sigma_i$. Evidently $ X_{g', n', k_i'}(\Sigma_i)$ is defined over $K$.
	
	Let $b_i$ be a simple essential curve in $\Sigma_i$ such that $i(a_i,b_i)=1$ if $\Sigma_i$ is of type $(1,1)$ and such that $i(a_i,b_i)=2$ if $\Sigma_i$ is of type $(0,4)$, where $i(a,b)$ denotes the intersection number, as in \autoref{fig:curve-choice}. Recall that, for a set of simple closed curves $\mc C$, we denote by $\Gamma_{\mc C}$ the subgroup of the mapping class group generated by $\{\tau_a \, \vert \, a \in \mc C \}$, the Dehn twists about curves in $\mc C$. Moreover, we denote by $\tr_{\mc C} \colon X_{g,n,k} \to \mb A^{\mc C}$ the map given by the traces along all $a \in \mc C$. 
	
	\begin{figure}

\tikzset{every picture/.style={line width=0.75pt}} 

\begin{tikzpicture}[x=0.75pt,y=0.75pt,yscale=-1,xscale=1]

\draw    (140.57,111.69) .. controls (141.36,141.51) and (191.69,165.54) .. (191.69,197.84) .. controls (191.69,230.15) and (155.51,254.17) .. (122.48,254.17) .. controls (89.45,254.17) and (50.92,229.32) .. (50.92,197.01) .. controls (50.92,164.71) and (96.53,144) .. (96.53,112.52) ;
\draw    (98.1,197.01) .. controls (116.19,182.1) and (129.17,183.34) .. (146.08,196.18) ;
\draw    (91.81,192.87) .. controls (111.47,205.3) and (135.85,204.47) .. (150.01,193.7) ;
\draw    (96.53,112.52) .. controls (111.47,116.66) and (124.06,120.81) .. (140.57,111.69) ;
\draw    (96.53,112.52) .. controls (111.47,104.24) and (130.35,107.55) .. (140.57,111.69) ;
\draw [color={rgb, 255:red, 74; green, 144; blue, 226 }  ,draw opacity=1 ]   (122.09,202.4) .. controls (139.39,205.71) and (145.29,253.34) .. (127.99,254.17) ;
\draw [color={rgb, 255:red, 74; green, 144; blue, 226 }  ,draw opacity=1 ] [dash pattern={on 0.84pt off 2.51pt}]  (122.09,202.4) .. controls (105.84,201.84) and (112.65,252.65) .. (127.99,254.17) ;
\draw [color={rgb, 255:red, 208; green, 2; blue, 27 }  ,draw opacity=1 ]   (120.52,153.8) .. controls (203.36,152.14) and (188.15,231.11) .. (123.66,231.66) .. controls (59.17,232.22) and (49.74,153.25) .. (120.52,153.8) -- cycle ;
\draw    (265.86,106.45) .. controls (294.82,135.73) and (335.83,151.49) .. (382.48,106.45) ;
\draw    (252.44,141.23) .. controls (261,138.73) and (267.61,120.71) .. (265.86,106.45) ;
\draw    (258.66,227.82) .. controls (285.49,203.3) and (267.22,158.75) .. (252.44,141.23) ;
\draw    (279.27,251.35) .. controls (280.05,241.84) and (270.33,226.82) .. (258.66,227.82) ;
\draw    (279.27,251.35) .. controls (307.45,232.08) and (364.21,227.57) .. (395.9,251.35) ;
\draw    (395.9,251.35) .. controls (390.45,241.34) and (398.23,221.32) .. (409.89,221.82) ;
\draw    (409.89,221.82) .. controls (386.18,201.3) and (387.73,157.75) .. (403.67,135.23) ;
\draw    (403.67,135.23) .. controls (392.4,134.73) and (381.9,123.22) .. (382.48,106.45) ;
\draw    (252.44,141.23) .. controls (242.73,136.23) and (254.78,104.2) .. (265.86,106.45) ;
\draw    (279.27,251.35) .. controls (271.49,256.35) and (253.22,234.83) .. (258.66,227.82) ;
\draw    (382.48,106.45) .. controls (391.23,99.19) and (411.45,123.22) .. (403.67,135.23) ;
\draw    (395.9,251.35) .. controls (404.45,254.85) and (415.72,229.83) .. (409.89,221.82) ;
\draw [color={rgb, 255:red, 74; green, 144; blue, 226 }  ,draw opacity=1 ]   (326.83,134.75) .. controls (348.58,135.58) and (355.4,234.98) .. (330.76,235.53) ;
\draw [color={rgb, 255:red, 74; green, 144; blue, 226 }  ,draw opacity=1 ] [dash pattern={on 0.84pt off 2.51pt}]  (330.76,235.53) .. controls (310.84,236.08) and (310.31,134.47) .. (326.83,134.75) ;
\draw [color={rgb, 255:red, 208; green, 2; blue, 27 }  ,draw opacity=1 ]   (272.04,188.59) .. controls (273.61,210.13) and (391.58,196.87) .. (391.84,181.14) ;
\draw [color={rgb, 255:red, 208; green, 2; blue, 27 }  ,draw opacity=1 ] [dash pattern={on 0.84pt off 2.51pt}]  (272.04,188.59) .. controls (270.46,168.16) and (392.1,163.74) .. (391.84,181.14) ;

\draw (123,212.22) node [anchor=north west][inner sep=0.75pt]  [font=\tiny]  {$a_{i}$};
\draw (118.69,140.98) node [anchor=north west][inner sep=0.75pt]  [font=\tiny]  {$b_{i}$};
\draw (346.23,141.54) node [anchor=north west][inner sep=0.75pt]  [font=\tiny]  {$a_{i}$};
\draw (369.3,199.52) node [anchor=north west][inner sep=0.75pt]  [font=\tiny]  {$b_{i}$};

\end{tikzpicture}
\caption{Curves as in the proof of \autoref{proposition:zariski-dense-orbit}}	\label{fig:curve-choice}
	\end{figure}
	
	We claim that $\tr_{\mc P}(\Gamma_{a_1,b_1, \dots, a_{3g-3+n}, b_{3g-3+n}}\cdot p)$ is Zariski dense in $\mb{A}^{3g-3+n}$. We will show by induction that $\tr_{a_1,\dots,a_m}(\Gamma_{a_1,b_1, \dots, a_{m}, b_{m}}\cdot p)$ is Zariski dense in $\mb A^{m}$ for $m=1,\dots, 3g-3+n$.
	
	Let us first deal with the base case $m=1$. Applying \autoref{lemma:zariski-dense-fiber} to $p$, $X_{g', n', k_1'}(\Sigma_1)$ and the pants decomposition of $\Sigma_1$ induced by $a_1$, we obtain that $\Gamma_{a_1} \cdot p$ is infinite. It follows from \autoref{lemma:dominant} that $\tr_{b_1}(\Gamma_{a_1}\cdot p)$ is infinite and, since $\tr_{b_1}(\Gamma_{a_1}\cdot p) \subset K$, we have that $\tr_{b_1}(\Gamma_{a_1}\cdot p) \setminus E$ is infinite. Pick any point $p' \in \Gamma_{a_1} \cdot p$ such that $\tr_{b_1}(p') \notin E$ and $p'$ lies in a perfect fiber for $\tr_{b_1}$ (there are only finitely many exceptions, due to \autoref{rem:P-good}). By \autoref{lemma:zariski-dense-fiber} applied to $p'$, $ X_{g', n', k_1'}(\Sigma_1)$ and the pants decomposition of $\Sigma_1$ induced by $b_1$, we obtain that $\Gamma_{b_1}\cdot p'$ is infinite, and so $\tr_{a_1}(\Gamma_{b_1}\cdot p')$ is infinite by \autoref{lemma:dominant}. Thus, we have showed that $\tr_{a_1}(\Gamma_{a_1, b_1} \cdot p)$ is infinite.
	
	We now deal with the induction step. Assume that $\tr_{a_1,\dots,a_m}(\Gamma_{a_1,b_1, \dots, a_{m}, b_{m}}\cdot p)$ is Zariski dense in $\mb A^{m}$. Let $\mc S$ be the set of $p' \in \Gamma_{a_1,b_1, \dots, a_{m}, b_{m}}\cdot p$ such that the restriction of $p'$ to $\Sigma_{m+1}$ is $\{a_{m+1}\}$-good. Pick any point $p' \in \mc S$. The same reasoning of the previous paragraph shows that $\tr_{a_{m+1}}(\Gamma_{a_{m+1}, b_{m+1}} \cdot p')$ is infinite. Since $a_{m+1}$ does not intersect $a_i$ and $b_i$ for all $i \neq m+1$, we have that $\tr_{a_{m+1}}(p')=\tr_{a_{m+1}}(p)$. It follows by \autoref{rem:P-good} that $\tr_{a_1,\dots,a_m}(\mc S)$ is Zariski dense in $\mb A^{m}$. Using again that $a_{m+1}$ and $b_{m+1}$ do not intersect $a_i$ and $b_i$ for all $i \neq m+1$, we also have that $\tr_{a_1,\dots,a_m}(\Gamma_{a_{m+1}, b_{m+1}} \cdot p')=\tr_{a_1,\dots,a_m}(p')$. Then $\tr_{a_1,\dots,a_{m+1}}(\Gamma_{a_{m+1}, b_{m+1}}\cdot \mc S)$ is Zariski dense in $\mb A^{m+1}$, and therefore also $\tr_{a_1,\dots,a_{m+1}}(\Gamma_{a_1,b_1, \dots, a_{m+1}, b_{m+1}}\cdot p)$ is Zariski dense in $\mb A^{m+1}$. 
%
\end{proof}

\section{Construction of integral representations}\label{section:construction_integral_representation}
In this section we will construct certain integral local systems on $\Sigma_{g,n}$ whose mapping class group orbit is Zariski-dense in the appropriate relative character variety---essentially, integral $\mathcal{P}$-good points in the terminology of \autoref{defn:p-good}. We will do so by gluing local systems on subsurfaces. We first
 introduce a class of integral matrices $\mathcal{M}_K$ such that local systems with peripheral monodromy in $\mathcal{M}_K$ are especially well-suited for such gluing.

\begin{definition} Let $K$ be a number field with ring of integers $\mc{O}_K$. 
	We define $\mc M_{K} \subset \SL_2(\mc O_K)$ to be the set of matrices
	\begin{equation*}
		\mc M_{K} = \left\{ \begin{pmatrix}
			a & u^{-1}(ad-1) \\
			u & d \\
		\end{pmatrix}   \, \Big | \, u \in \mc O_K^{\times}, \, a,d \in \mc O_K  \right\}.
	\end{equation*}
\end{definition}
\begin{remark}
	$\mc M_K$ is closed under inversion.
\end{remark}

\subsection{Surfaces of genus $0$}

\begin{lemma}\label{lemma:existence-integral-matrices-pant}
	Let $k_1, k_2 \in \mc O_K$ and let $A \in \mc M_K$. Then there exists a quadratic extension $L$ of $K$ and matrices $M_1, M_2 \in \SL_2(\mc O_L)$ such that $\tr M_i = k_i$ and $M_2=AM_1$. 
\end{lemma}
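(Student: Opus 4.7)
The plan is to parametrize $M_1$ by setting its lower-left entry to zero, reducing the problem to a single monic quadratic over $\mathcal{O}_K$ whose splitting field supplies $L$. Concretely, take the ansatz $M_1 = \begin{pmatrix} x & y \\ 0 & w \end{pmatrix}$. The conditions $\det M_1 = 1$ and $\tr M_1 = k_1$ become $xw = 1$ and $x+w = k_1$, forcing $x$ to be a root of the monic integral polynomial $T^2 - k_1 T + 1 \in \mathcal{O}_K[T]$. Setting $L := K(\sqrt{k_1^2 - 4})$ gives $x, w \in \mathcal{O}_L$, and $[L:K] \leq 2$.

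The remaining condition is $\tr(AM_1) = k_2$. Writing $A = \begin{pmatrix} a & u^{-1}(ad-1) \\ u & d \end{pmatrix}$ and computing directly, one finds
\[
\tr(AM_1) = ax + uy + dw.
\]
This is linear in $y$ with leading coefficient $u \in \mathcal{O}_K^\times$, so the unique solution
\[
y := u^{-1}\bigl(k_2 - ax - dw\bigr)
\]
lies in $\mathcal{O}_L$. Setting $M_2 := AM_1$ then automatically yields a matrix in $\SL_2(\mathcal{O}_L)$ with $\tr M_2 = k_2$ by construction, so the lemma follows.

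I do not anticipate any real obstacle: the definition of $\mathcal{M}_K$ is tailored precisely so that $u \in \mathcal{O}_K^\times$, which is the feature allowing $y$ to be solved integrally rather than only over a localization. The degree-two bound on $L/K$ is essentially sharp, achieved whenever $k_1^2 - 4$ fails to be a square in $K$, which explains why one cannot in general stay over $K$ itself. As a byproduct, $M_2$ will in fact lie in $\mathcal{M}_L$: its lower-left entry is $ux$, which is a unit in $\mathcal{O}_L$ since $xw = 1$. This observation will presumably be useful when iterating the lemma to glue local systems across pants decompositions in the sequel.
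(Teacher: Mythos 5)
Your proof is correct and follows essentially the same strategy as the paper's: exploit the unit lower-left entry of $A$ to solve linearly for $y$, reducing the system to a monic quadratic over $\mathcal{O}_K$ whose splitting field supplies $L$. The only difference is that you fix the lower-left entry $z$ of $M_1$ to be zero at the outset, whereas the paper leaves $z$ as a free parameter in $\mathcal{O}_K$ and shows the resulting quadratic in $x$ is monic with coefficients in $\mathcal{O}_K[z]$; your choice $z=0$ is a harmless simplification that yields exactly the paper's argument specialized to that case.
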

\begin{proof}
	Set 
	\begin{equation*}
		A=\begin{pmatrix}
			a & b \\
			c & d \\
		\end{pmatrix},
		M_1=\begin{pmatrix}
			x & y \\
			z & w \\
		\end{pmatrix}
	\end{equation*}
	so that our problem is equivalent to solving the following system
	\begin{equation*}
		\begin{cases}
			ax+bz+cy+dw=k_2\\
			x+w=k_1\\
			xw-yz=1\\
		\end{cases}
	\end{equation*}
	for $x,y,z,w \in \overline{\mb Z}$. This is equivalent to solving
	\begin{equation*}
		\begin{cases}
			(a-d)x+bz+cy=k_2-dk_1\\
			x^2-k_1x+yz+1=0\\
		\end{cases}
	\end{equation*}
	with $x,y,z \in \overline{\mb Z}$. Since $A \in \mc M_K$, we have $c \in \mc O_K^{\times}$, so the first equation gives
	\begin{equation*}
		y=c^{-1}(k_2-dk_1)-c^{-1}bz-c^{-1}(a-d)x.
	\end{equation*} 
	Substituting this expression for $y$ into the second equation, we get a monic quadratic equation in $x$ with coefficients in $\mc O_K[z]$. In particular, for any $z \in \mc O_K$, there exists a quadratic extension $L$ of $K$ and $x \in \mc O_L$ solving the equation. This concludes the proof.
\end{proof}

\begin{corollary}\label{corollary:existence-integral-representation-pant}
	Let $k_1, k_2 \in \mc O_K$ and let $A \in \mc M_K$ such that
	\begin{equation}\label{equation:numerical-condition-irreducibility}
		(\tr A)^2 + k_1^2+k_2^2-k_1k_2\tr A-2 \neq 2.
	\end{equation}
	Then there exists a quadratic extension $L$ of $K$ and a representation $\rho \colon \pi_1(\Sigma_{0,3}) \to \SL_2(\mc O_L)$ with monodromy $A$ along one puncture and trace $k_1$ and $k_2$ along the two other punctures, such that $\rho\otimes L$ is absolutely irreducible.
\end{corollary}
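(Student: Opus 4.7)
The plan is to use Lemma \ref{lemma:existence-integral-matrices-pant} to manufacture a pair of generators of a suitable $\SL_2$-representation of $\pi_1(\Sigma_{0,3})$, and then to check absolute irreducibility via the trace criterion recalled in Remark \ref{rem:P-good}. First I would fix the standard presentation $\pi_1(\Sigma_{0,3}) = \langle \gamma_1, \gamma_2, \gamma_3 \mid \gamma_1 \gamma_2 \gamma_3 = 1 \rangle$, with $\gamma_i$ a small loop about the $i$-th puncture. Applying Lemma \ref{lemma:existence-integral-matrices-pant} yields a quadratic extension $L/K$ and matrices $M_1, M_2 \in \SL_2(\mathcal{O}_L)$ with $\tr M_i = k_i$ and $M_2 = A M_1$.

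Next, define $\rho \colon \pi_1(\Sigma_{0,3}) \to \SL_2(\mathcal{O}_L)$ by $\rho(\gamma_1) = M_1$ and $\rho(\gamma_2) = M_2^{-1}$; since $\pi_1(\Sigma_{0,3})$ is free on $\gamma_1, \gamma_2$, this determines $\rho$, and the relation $\gamma_1 \gamma_2 \gamma_3 = 1$ then forces $\rho(\gamma_3) = (M_1 M_2^{-1})^{-1} = M_2 M_1^{-1} = A$. So the monodromy about $\gamma_3$ is literally the prescribed matrix $A$, while the monodromies about $\gamma_1, \gamma_2$ have traces $k_1, k_2$, as required. For absolute irreducibility I would invoke Remark \ref{rem:P-good}: a two-dimensional $\SL_2$-representation of $\pi_1(\Sigma_{0,3})$ is reducible (and, since $\pi_1(\Sigma_{0,3})$ is non-abelian, absolutely reducible is the same as reducible on the semisimplification) if and only if its three boundary traces $(x,y,z)$ satisfy $x^2 + y^2 + z^2 - xyz = 4$. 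Substituting $(x,y,z) = (k_1, k_2, \tr A)$, the hypothesis $(\tr A)^2 + k_1^2 + k_2^2 - k_1 k_2 \tr A - 2 \neq 2$ is precisely the negation of this identity, so $\rho \otimes L$ is absolutely irreducible.

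I do not anticipate a substantive obstacle: Lemma \ref{lemma:existence-integral-matrices-pant} carries all of the arithmetic content of producing the integral generators $M_1, M_2$, and the irreducibility test is an immediate translation of the hypothesis. The only point requiring care is the convention that pins the monodromy about the third puncture to equal $A$ on the nose rather than some conjugate; this is what motivates the asymmetric choice $\rho(\gamma_2) = M_2^{-1}$, combined with the identity $M_2 M_1^{-1} = A$ coming from the relation $M_2 = A M_1$ provided by the lemma.
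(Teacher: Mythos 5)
Your proof is correct and takes essentially the same approach as the paper: apply \autoref{lemma:existence-integral-matrices-pant} to produce the generator matrices, then verify absolute irreducibility using the trace criterion of \cite[Lemma 3.3]{whang:ant} (as recalled in \autoref{rem:P-good}). You have simply filled in the bookkeeping — the choice $\rho(\gamma_1)=M_1$, $\rho(\gamma_2)=M_2^{-1}$, forcing $\rho(\gamma_3)=M_2M_1^{-1}=A$ — that the paper leaves to the reader.
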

\begin{proof}
	The existence of a representation with the desired local monodromy and traces follows immediately from \autoref{lemma:existence-integral-matrices-pant}; irreducibility follows from \cite[Lemma $3.3$]{whang:ant}.
\end{proof}

\begin{lemma}\label{lemma:induction-step}
	Assume $\mc O_K^{\times}$ is infinite.
	Fix $k\in \mc O_K$ and $A \in \mc M_K$. Then there exists infinitely many pairs of matrices $(B,M) \in \mc M_K \times \SL_2(\mc O_K)$ such that $ABM=I$, $\tr M = k$, $\tr B \notin E$ and the traces of $A,B$ and $M$ satisfy \eqref{equation:numerical-condition-irreducibility}. Moreover, there are infinitely many values of $\tr B$ as $B$ varies among the above solutions.
\end{lemma}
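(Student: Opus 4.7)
The plan is to use that $B$ is \emph{forced} by the relation $ABM=I$, i.e.\ $B = A^{-1}M^{-1}$, so the task reduces to parametrizing $M \in \SL_2(\mc O_K)$ with $\tr M = k$ in a way that automatically forces $B \in \mc M_K$ and makes $\tr B$ vary over infinitely many values avoiding a finite bad set. I would use the hypothesis $A \in \mc M_K$ (i.e.\ the $(2,1)$-entry of $A$ is a unit) to arrange this via a single linear substitution.

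Write $A = \begin{pmatrix} a & b \\ c & d \end{pmatrix}$ with $c \in \mc O_K^\times$. Fix any $u \in \mc O_K^\times$ and consider the one-parameter family
\[
M(x) \defeq \begin{pmatrix} x & u^{-1}(x(k-x)-1) \\ u & k-x \end{pmatrix}, \qquad x \in \mc O_K,
\]
which visibly lies in $\SL_2(\mc O_K)$ with $\tr M(x) = k$. A direct computation gives $B = A^{-1}M(x)^{-1}$ with $(2,1)$-entry equal to $c(x-k) - au$. Since $c$ is a unit, the substitution $x = k + ac^{-1}u + v$ with $v \in \mc O_K^\times$ makes this entry $cv \in \mc O_K^\times$; combined with $\det B = 1$ this places $B$ in $\mc M_K$.

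Next I would compute $\tr B = ax + bu + cu^{-1}(x(k-x)-1) + d(k-x)$, which is a polynomial in $x$ with leading coefficient $-cu^{-1} \neq 0$; after the substitution it becomes a non-constant quadratic polynomial in $v$. As $v$ ranges over the (infinite) set $\mc O_K^\times$, a non-constant polynomial over a field takes infinitely many values, so $\{\tr B(v) : v \in \mc O_K^\times\}$ is infinite. The set $E \cap K$ is finite (as $K$ is a number field), and the irreducibility condition \eqref{equation:numerical-condition-irreducibility} excludes at most two values of $\tr B$ for the given $\tr A$ and $k$. Removing the finitely many bad $v$'s yields infinitely many pairs $(B,M(x))$ satisfying all the required conditions, with infinitely many distinct values of $\tr B$.

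The main obstacle is the condition $B \in \mc M_K$: a priori $B = A^{-1}M^{-1}$ is just in $\SL_2(\mc O_K)$, and forcing its $(2,1)$-entry to be a unit is what is delicate. The assumption $A \in \mc M_K$ makes this step trivial via the linear substitution described above, but if $c$ were a non-unit we would have no reason to expect infinitely many integral solutions. Everything else (the trace condition, the irreducibility inequality, avoiding $E$) is automatic once the parametrization is set up, since each imposes only a finite constraint on $v$.
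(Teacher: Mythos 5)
Your proof is correct and close in spirit to the paper's, with a dual choice of parametrization. The paper parametrizes $B$ directly (writing $B$ with lower-left entry a unit $v$ and solving for its remaining entries as rational functions of a unit parameter $s$ so that $\tr AB = k$), whereas you parametrize $M$ with $\tr M = k$ built in, and recover $B = A^{-1}M^{-1}$, then force $B$'s $(2,1)$-entry to be a unit by the linear substitution $x = k + ac^{-1}u + v$. Both hinge on the same two facts: the $(2,1)$-entry of $A$ being a unit (so the forcing substitution is available), and $\mc O_K^{\times}$ being infinite (so a non-constant polynomial takes infinitely many values on it, only finitely many of which lie in $E\cap K$ or violate \eqref{equation:numerical-condition-irreducibility}). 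Your version is marginally cleaner in that $\tr B$ ends up an honest polynomial in the unit $v$ rather than a rational function in $s$, so the "finitely many bad parameters" bookkeeping is a touch more transparent; otherwise the two arguments are interchangeable.
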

\begin{proof}
	Since $M=(AB)^{-1}$, we need to find $B \in \mc M_K$ such that $\tr AB=k$. Assume that $A= \begin{pmatrix} a & u^{-1}(ad-1) \\ u & d  \end{pmatrix}$. Pick any $v \in \mc O_K^{\times}$ and let $B = \begin{pmatrix} x & v^{-1}(xw-1) \\ v & w  \end{pmatrix}$ with
	 \begin{equation*}
	 	\begin{cases}
	 		x=s^{-1}k-u^{-1}vd+s^{-1}(u^{-1}v+uv^{-1})\\
	 		w=u^{-1}v(s-a)\\
	 	\end{cases}
	 \end{equation*}
	for a suitable $s \in \mc O_K^{\times}$ that we will choose so that the required conditions on $\tr B$ will be satisfied. One can verify that the above choice of $B$ gives $\tr AB=k$. Using the above expressions for $x$ and $w$ one sees immediately that both $\tr B=x+w$ and $(\tr A)^2 + (\tr B)^2+k^2-k\tr A\tr B-4$ are non-constant rational functions in $K(s)$. In particular, for all but finitely many $s \in \mc O_K^{\times}$, the condition of \eqref{equation:numerical-condition-irreducibility} is satisfied and we have $\tr B \notin E$, proving the claim.
\end{proof}

\begin{proposition}\label{proposition:existence-representation-punctured-sphere}
	Let $K$ be a number field, $k=(k_1,\dots,k_{n}) \in (\mc O_K)^n$, $M \in \mc M_K$, $n \ge 3 $. Assume $\mc O_K^{\times}$ is infinite. There exists a pants decomposition $\mathcal{P}$ of $\Sigma=\Sigma_{0,n+1}$, a quadratic extension $L$ of $K$ and a $\mc P$-good integral representation $\rho \colon \pi_1(\Sigma) \to \SL_2(\mc O_L)$ such that
	\begin{itemize}
		\item the trace of the monodromy at the $i$-th puncture is $k_i$ for $i=1, \dots, n$,
		\item the monodromy at the $(n+1)$-st puncture is $M$.
	\end{itemize}
\end{proposition}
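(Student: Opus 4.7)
The plan is to induct on $n \geq 3$, building up a ``linear'' pants decomposition of $\Sigma_{0,n+1}$: at each step peel off one pair of pants using \autoref{lemma:induction-step}, keeping the monodromy along the newly created interior curve in $\mc M_K$ so that the Lemma may be reapplied, and close up at the end with \autoref{corollary:existence-integral-representation-pant} (the only place a quadratic extension enters).

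For the base case $n = 3$, take $\mc P = \{a_1\}$ separating $\Sigma_{0,4}$ into pairs of pants $P_1$ (containing punctures $n+1$ and $1$) and $P_2$ (containing punctures $2$ and $3$). Since $\mc M_K$ is closed under inversion, $M^{-1} \in \mc M_K$; applying \autoref{lemma:induction-step} with $A = M^{-1}$ and $k = k_1$ produces $A_1 \in \mc M_K$ and a representation of $\pi_1(P_1)$ realising the required boundary data, with $\tr A_1 \notin E$ and the irreducibility inequality \eqref{equation:numerical-condition-irreducibility} satisfied for the triple $(\tr M, \tr A_1, k_1)$. The last clause of the Lemma allows infinitely many values of $\tr A_1$, so I further impose
\begin{equation*}
(\tr A_1)^2 + k_2^2 + k_3^2 - k_2 k_3 \tr A_1 \neq 4
\end{equation*}
(a finite bad set). \autoref{corollary:existence-integral-representation-pant}, applied to $P_2$ with input $A_1^{-1} \in \mc M_K$ and traces $(k_2, k_3)$, then yields a quadratic extension $L/K$ and an absolutely irreducible representation of $\pi_1(P_2)$ over $\mc O_L$ with boundary monodromy $A_1^{-1}$ along $a_1$. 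The two representations glue via van Kampen, since the matrices assigned to $a_1$ from the two sides are mutual inverses (as dictated by the orientation reversal of the boundary loop).

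For the inductive step $n \geq 4$, apply \autoref{lemma:induction-step} with $A = M^{-1}$ and $k = k_1$ to obtain $A_1 \in \mc M_K$ and a representation on the front pair of pants $P_1$ (punctures $n+1$ and $1$, new boundary $a_1$). The complementary surface is a copy of $\Sigma_{0,n}$ with trace data $(k_2, \ldots, k_n) \in \mc O_K^{n-1}$ and matrix monodromy $A_1^{-1} \in \mc M_K$ at the new puncture; apply the induction hypothesis to obtain a pants decomposition $\mc P'$, a quadratic extension $L/K$, and a $\mc P'$-good integral representation. Glue along $a_1$ to obtain an integral representation on $\Sigma_{0, n+1}$ with pants decomposition $\mc P = \mc P' \cup \{a_1\}$. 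The $\mc P$-good property is inherited: traces along $\mc P$ avoid $E$ (by the Lemma on $a_1$, by induction on $\mc P'$), and irreducibility on each pair of pants holds by \eqref{equation:numerical-condition-irreducibility} for pieces produced by the Lemma (including $P_1$) and by the absolute irreducibility in \autoref{corollary:existence-integral-representation-pant} for the single closing piece at the bottom of the induction.

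The real technical work is contained in \autoref{lemma:induction-step} and \autoref{corollary:existence-integral-representation-pant}; at the level of this proposition the main thing to watch is bookkeeping---that each newly produced ``interior'' matrix lies in $\mc M_K$ so that the induction can proceed, and that one uses the final clause of \autoref{lemma:induction-step} (infinitely many achievable values of $\tr B$) to secure the irreducibility hypothesis of the Corollary at the closing step. Both features are designed into the earlier results, so I expect no serious obstacle beyond careful indexing of the pants decomposition.
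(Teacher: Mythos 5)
Your proposal is correct and follows essentially the same approach as the paper: induct on $n$ by peeling off a pair of pants containing the distinguished $(n+1)$-st puncture via \autoref{lemma:induction-step} (which keeps the new interior matrix in $\mc M_K$ so the recursion can continue), recurse on the remaining $\Sigma_{0,n}$, and close the base case $n=3$ with \autoref{corollary:existence-integral-representation-pant} — the single place a quadratic extension enters. The only differences are cosmetic: you absorb $k_1$ into the first pair of pants while the paper absorbs $k_n$, and you feed $M^{-1}$ into the lemma where the paper uses $M$ directly; both are just orientation/labelling conventions, and the extra condition you impose on $\tr A_1$ to feed the Corollary at the base case is exactly the paper's choice of $M'$ when $n=3$.
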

\begin{proof}
	Pick a simple loop $\gamma$ in $\Sigma$ that separates the $n+1$-st and the $n$-th puncture from the rest. This gives a decomposition $\Sigma_{0,n+1}=\Sigma_1 \cup \Sigma_{2}$ where $\Sigma_1$ is a pair of pants. Using \autoref{lemma:induction-step} we obtain an irreducible  $\SL_2(\mc O_K)$-representation on $\Sigma_1$ with monodromy of trace $k_n$ along the $n$-th puncture, monodromy $M$ along the $n+1$-th puncture and monodromy $M' \in \mc M_K$ along $\gamma$, with $\tr M' \notin E$. Moreover, when $n=3$, we choose $M'$ so that $\tr M'$, $k_1$ and $k_{2}$ satisfy the condition of \eqref{equation:numerical-condition-irreducibility}.

	We will argue by induction on $n$.
	
	If $n=3$, $\Sigma_2$ is a pair of pants, and using \autoref{corollary:existence-integral-representation-pant} we find a quadratic extension $L$ of $K$ and an irreducible $\mc O_L$-representation on $\Sigma_2$ with monodromy $M'$ along the puncture corresponding to $\gamma$. We may then glue along $\gamma$ the two representations we constructed on $\Sigma_1$ and $\Sigma_2$, thereby obtaining an $\mc O_L$ representation on $\Sigma_{0,4}$ satisfying the sought conditions.
	
	If $n \ge 4$, by the inductive hypothesis there exists a quadratic extension $L$ of $K$ and an $\mc O_L$-representation on $\Sigma_2$ satisfying the conditions of \autoref{proposition:existence-representation-punctured-sphere} with monodromy $M'$ along the puncture corresponding to $\gamma$. We may then glue along $\gamma$ the representations on $\Sigma_1$ and $\Sigma_2$, thereby obtaining a representation on $\Sigma_{0,n+1}$ satisfying the sought conditions.
\end{proof}

\subsection{Surfaces of positive genus}

\begin{lemma}\label{lemma:existence-representation-two-holed-torus}
	Assume $\mc O_K^{\times}$ is infinite. Let $M \in \mc M_K$. Then there exists a pants decomposition $\mc P$ of $\Sigma_{1,2}$ and a $\mc P$-good representation $\rho \colon \pi_1(\Sigma_{1,2}) \to \SL_2(\mc O_K)$ such that the monodromies along the first and second puncture are $M$ and $M^{-1}$, respectively.
\end{lemma}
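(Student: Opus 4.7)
The strategy is to construct $\rho$ directly, exploiting the fact that $\pi_1(\Sigma_{1,2})$ is free of rank three on generators $a, b, c_1$ (with $c_2 = c_1^{-1}[b,a]$ forced by the surface relation $[a,b]c_1 c_2 = 1$). Any choice of commuting $A, B \in \SL_2(\mc O_K)$, together with $\rho(c_1) := M$, extends uniquely to a homomorphism $\rho \colon \pi_1(\Sigma_{1,2}) \to \SL_2(\mc O_K)$ with $\rho(c_2) = M^{-1}$.

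By Dirichlet's unit theorem applied to the hypothesis that $\mc O_K^\times$ is infinite, pick $\lambda \in \mc O_K^\times$ of infinite multiplicative order, and set $A := \text{diag}(\lambda, \lambda^{-1})$, $B := I$. Take the pants decomposition $\mc P = \{\gamma_1, \gamma_2\}$, where $\gamma_1$ is a simple closed curve in the free homotopy class of $a$ and $\gamma_2$ is one representing $ac_1$. Concretely, cutting $\Sigma_{1,2}$ along $\gamma_1$ produces a four-holed sphere with boundaries $\gamma_1^\pm, c_1, c_2$, and cutting further along $\gamma_2$ separates $\{\gamma_1^+, c_1\}$ from $\{\gamma_1^-, c_2\}$; a short pants-relation calculation shows both resulting pants $P_1, P_2$ have the same boundary trace vector $(\tr A, \tr M, \tr(AM))$.

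Next, verify $\mc P$-goodness. Writing $M = \begin{pmatrix} a_0 & b_0 \\ u & d_0 \end{pmatrix}$, we have $\tr A = \lambda + \lambda^{-1}$ and $\tr(AM) = \lambda a_0 + \lambda^{-1} d_0$. The conditions $\tr_{\gamma_i}(\rho) \notin E \cup \{\pm 2\}$ become two non-constant Laurent inequalities in $\lambda$ (using that $E \cap K$ is finite), while by \cite[Lemma 3.3]{whang:ant} perfectness of both pants amounts to
\begin{equation*}
(\tr A)^2 + (\tr M)^2 + (\tr AM)^2 - (\tr A)(\tr M)(\tr AM) - 4 \ne 0.
\end{equation*}
A direct expansion collapses the left-hand side to $-b_0 u \cdot (\lambda - \lambda^{-1})^2$, which is nonzero whenever $b_0 \ne 0$ and $\lambda \ne \pm 1$. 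A sufficiently generic integer power of a fixed infinite-order unit $\lambda_0 \in \mc O_K^\times$ then satisfies all three conditions simultaneously.

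The main obstacle is the degenerate subcase $b_0 = 0$ (equivalently, $M$ lower triangular), where $(0,1)$ is a common eigenvector of $M$ and the diagonal $A$, forcing the pants representations to be reducible. I would handle it by replacing $A$ with $\begin{pmatrix} \lambda & 1 \\ 0 & \lambda^{-1} \end{pmatrix}$, whose eigenvectors are disjoint from those of $M$; the corresponding perfectness polynomial specializes to $u^2 \ne 0$ at $\lambda = 1$, so it is not identically zero in $\lambda$, and the same genericity argument concludes.
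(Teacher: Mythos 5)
Your proposal follows essentially the same blueprint as the paper — build $\rho$ from commuting $A,B$ with $\rho(c_1)=M$, $\rho(c_2)=M^{-1}$, choose a pants decomposition consisting of the handle curve and a separating curve, and reduce $\mathcal{P}$-goodness to explicit trace inequalities using \cite[Lemma 3.3]{whang:ant} — but there is a gap in the genericity step. You restrict $A$ to be diagonal (resp.\ upper triangular with a \emph{fixed} off-diagonal entry $1$ in the fallback), so that $\tr(AM)=\lambda a_0+\lambda^{-1}d_0$. When $M$ is anti-diagonal, i.e.\ $a_0=d_0=0$ (take $M=\begin{pmatrix}0 & -u^{-1}\\ u & 0\end{pmatrix}\in\mathcal{M}_K$, which is a perfectly valid element of $\mathcal{M}_K$), this expression is identically zero for every $\lambda$, and $0\in E$ (as $0=i+i^{-1}$). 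So the ``two non-constant Laurent inequalities in $\lambda$'' are not both non-constant, and no choice of $\lambda$ makes $p$ $\mathcal{P}$-good. Your fallback $A=\begin{pmatrix}\lambda & 1\\0 & \lambda^{-1}\end{pmatrix}$ (which you only deploy when $b_0=0$, not here) would give $\tr(AM)=u$, still constant in $\lambda$, so it does not repair the issue either when $u\in E$. Note also that $b_0\neq 0$ in this anti-diagonal case, so the obstruction is not caught by your stated degenerate subcase at all.

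The paper avoids this by keeping $A=\begin{pmatrix}\lambda & t\\0 & \lambda^{-1}\end{pmatrix}$ with $t\in\mathcal{O}_K$ a genuinely free parameter; then the trace along the separating curve is of the form $\lambda d_0 - ut + \lambda^{-1} a_0$, which is non-constant in $t$ precisely because $u$ is a unit. The paper's irreducibility argument is also phrased slightly differently — as the absence of a common fixed point of $A$, $M$, $M^{-1}A$ on $\mathbb{P}^1$ rather than through the Fricke polynomial — but that is equivalent to your computation $\tr([A,M])-2=-b_0 u(\lambda-\lambda^{-1})^2$, which is correct and in fact a nice explicit form. To complete your proof, replace ``$A:=\operatorname{diag}(\lambda,\lambda^{-1})$, $B:=I$'' by ``$A:=\begin{pmatrix}\lambda & t\\0 & \lambda^{-1}\end{pmatrix}$, $B$ any element of $\SL_2(\mathcal{O}_K)$ commuting with $A$'' and vary both $\lambda$ and $t$; then the trace along $\gamma_2$ is never a constant Laurent polynomial, and the rest of your argument goes through.
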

\begin{proof}
	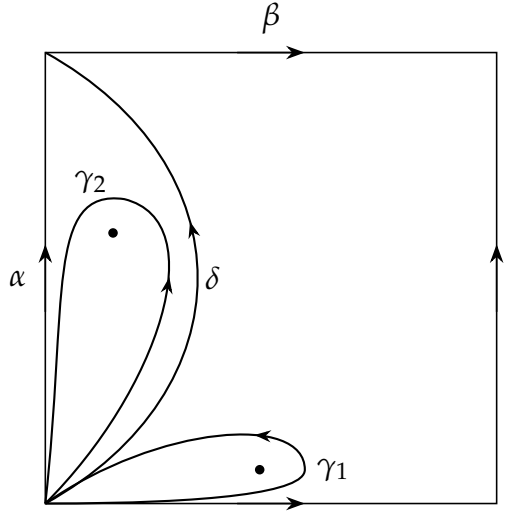
\begin{figure}[h]
		\begin{tikzpicture}[scale=1.5] 
			\def\L{4} 
			\coordinate (O) at (0,0); 
			\coordinate (TopLeft) at (0,\L); 
			
			\draw[semithick] (O) rectangle (\L,\L);
			
			\draw[-{Stealth[length=2.5mm, width=1.8mm]}, semithick] (0, \L*0.5 - 0.3) -- (0, \L*0.5 + 0.3);
			\node[left=0.10cm] at (0, \L/2) {$\alpha$};
			
			\draw[-{Stealth[length=2.5mm, width=1.8mm]}, semithick] (\L, \L*0.5 - 0.3) -- (\L, \L*0.5 + 0.3);
			
			\draw[-{Stealth[length=2.5mm, width=1.8mm]}, semithick] (\L*0.5 - 0.3, 0) -- (\L*0.5 + 0.3, 0);
			
			\draw[-{Stealth[length=2.5mm, width=1.8mm]}, semithick] (\L*0.5 - 0.3, \L) -- (\L*0.5 + 0.3, \L);
			\node[above=0.10cm] at (\L/2, \L) {$\beta$};
			
			\path[draw, thick, postaction={decorate, decoration={markings, mark=at position 0.6 with {\arrow{Stealth[length=2.2mm, width=1.6mm]}}}}]
			(O) .. controls (1.5, 0) and (2.3, 0.1) .. (2.3, 0.3)  
			.. controls (2.3, 0.8) and (1.0, 0.7) .. (O);        
			\node at (2.55, 0.3) {$\gamma_1$}; 
			\fill (1.9, 0.3) circle (1.2pt);   
			
			\path[draw, thick, postaction={decorate, decoration={markings, mark=at position 0.65 with {\arrow{Stealth[length=2.2mm, width=1.6mm, reversed]}}}}]
			(O) .. controls (0.2, 1.8) and (0.0, 2.8) .. (0.7, 2.7)   
			.. controls (1.5, 2.5) and (1.0, 1.0) .. (O);        
			\node at (0.4, 2.85) {$\gamma_2$};  
			\fill (0.6, 2.4) circle (1.2pt);    
			
			\path[draw, thick, postaction={decorate, decoration={markings, mark=at position 0.6 with {\arrow{Stealth[length=2.2mm, width=1.6mm]}}}}]
			(O) .. controls (1.8, 1.0) and (1.8, 3.0) .. (TopLeft); 
			\node[right=0.1cm] at (1.25, 2.0) {$\delta$}; 
			
		\end{tikzpicture}
		\caption{Generators of the fundamental group of a two-holed torus}
		\label{figure:two-holed-torus}
		\centering
	\end{figure}
	We consider generators $\alpha,\beta, \gamma_1,\gamma_2$ of $\pi_1(\Sigma_{1,2})$ as in \autoref{figure:two-holed-torus}, and we consider the pants decomposition $\mc P$ given by the paths $\alpha$ and $\delta$. The only relations in the fundamental group are $\alpha\beta\alpha^{-1}\beta^{-1}\gamma_1\gamma_2=1$ and $\delta= \gamma_2\alpha$.
	
	A representation $\rho$ as in the statement has to satisfy $\rho(\gamma_1)=M$ and $\rho(\gamma_2)=M^{-1}$, so that a $\mc P$-good $\rho$ is completely determined by $A\coloneqq \rho(\alpha)$ and $B \coloneqq \rho(\beta)$, with the following conditions:
	\begin{itemize}
		\item $A,B \in \SL_2(\mc O_K)$ and $ABA^{-1}B^{-1}=I$;
		\item $\tr A, \tr M^{-1}A \notin E$;
		\item The condition of \eqref{equation:numerical-condition-irreducibility} holds for $\tr A, \tr(M^{-1}A)$ and $\tr M$.
	\end{itemize}
	As an immediate consequence of the infinitude of $\mc O_K^{\times}$, we have that there exists infinitely many matrices $A=\begin{pmatrix} \lambda & t \\ 0 & \lambda^{-1}  \end{pmatrix}$ with $\lambda \in \mc O_K^{\times}$ and $t \in \mc O_K$ such that $\tr A, \tr M^{-1}A \notin E$ and the condition of \eqref{equation:numerical-condition-irreducibility} holds for $\tr A, \tr(M^{-1}A)$ and $\tr M$. This last condition follows from the fact that \eqref{equation:numerical-condition-irreducibility} corresponds to the representation being irreducible when restricted to the pair of pants, which means that $A, M, M^{-1}A$ have no common fixed point when viewed as linear automorphisms of $\mb P^1$: the fixed points of $A$ are $[1:0]$ and $[t:\lambda^{-1}-\lambda]$, so that suitably picking $\lambda$ and $t$ they are never fixed points of $M$ (we are using that the bottom left entry of $M$ is a unit, since $M \in \mc M_K$). 
	
	Finally, picking any $B \in \SL_2(\mc O_K)$ commuting with $A$, we see that the resulting representation satisfies the sought properties.
\end{proof}

To construct integral representations on a once-punctured torus, we introduce a subset of $\mc M_K$.
\begin{definition}
Let $\mc N_{K} \subset \SL_2(\mc O_K)$ be the set of matrices
\begin{equation*}
	\mc N_{K} = \left\{ \left[ \begin{pmatrix}
		a & u^{-1}(a-1) \\
		u & 1 \\
	\end{pmatrix} , \begin{pmatrix}
	v & 0 \\
	0 & v^{-1} \\
	\end{pmatrix} \right]   \, \Big | \, u,v,v-v^{-1} \in \mc O_K^{\times}, \, v \notin \mu_{\infty}, \, a \in \mc O_K  \right\}
\end{equation*}
\end{definition}
\begin{remark}
	$\mc N_K$ is closed under inversion.
\end{remark}
\begin{remark}\label{remark:special-unit}
	As long as $\mc O_K$ contains a unit $v \in \mc O_K^{\times}\setminus \mu_{\infty}$ such that $v-v^{-1}$ is also a unit, then $\mc N_K$ is infinite. For instance, it is sufficient that $ \sqrt{5} \in K$, so that $\frac{1+\sqrt{5}}{2} \in \mc O_K^{\times}$.
\end{remark}
\begin{remark}\label{remark:unit-scaling-nk}
	For $\lambda \in \mc O_K^{\times}$ we have
	\begin{equation*}
		\left[ \begin{pmatrix}
			a\lambda & u^{-1}(a-1)\lambda^{-1} \\
			u\lambda & \lambda^{-1} \\
		\end{pmatrix} , \begin{pmatrix}
			v & 0 \\
			0 & v^{-1} \\
		\end{pmatrix} \right]= \left[ \begin{pmatrix}
		a & u^{-1}(a-1) \\
		u & 1 \\
		\end{pmatrix} , \begin{pmatrix}
		v & 0 \\
		0 & v^{-1} \\
		\end{pmatrix} \right]
	\end{equation*}
	In particular, if $\mc O_K^{\times}$ is infinite, for any $M \in \mc N_K$ the set $\left\{ \tr A \, \Big | \, \left[A, \begin{pmatrix} 
		i & 0 \\
		0 & -i \\
	\end{pmatrix}\right]=M \right\}$ is infinite.
\end{remark}

\begin{remark}\label{remark:trace-of-commutator-matrix}
The matrices of $\mc N_K$ are of the form
\begin{equation*}
	M=\begin{pmatrix}
		a(1-v^{-2})+ v^{-2} & u^{-1}a(a-1)(1-v^2)\\[5pt]
		u(1-v^{-2}) & a(1-v^2)+v^2
	\end{pmatrix}
\end{equation*}
so that $\mc N_K \subset \mc M_K$.
We also have
\begin{equation}\label{equation:trace-formula}
	\tr M= -a(v-v^{-1})^2+v^2+v^{-2}
\end{equation} 
so that, since $-(v-v^{-1})^2$ is a unit, for any $k \in \mc O_K$ there exists a matrix $M \in \mc N_K$ with $\tr M=k$.
\end{remark}

The following result is a consequence of the definition of $\mc N_K$:
\begin{lemma}\label{lemma:existence-integral-representation-torus}
	For any $C \in \mc N_K$ there exists a pants decomposition $\mc P$ of $\Sigma_{1,1}$ and a $\mc P$-good representation $\rho \colon \pi_1(\Sigma_{1,1}) \to \SL_2(\mc O_K)$ with monodromy $C$ along the puncture.
\end{lemma}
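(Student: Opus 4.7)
The plan is to set $\rho(\alpha) = A$ and $\rho(\beta) = D$, where $\alpha, \beta \in \pi_1(\Sigma_{1,1})$ are standard generators with $[\alpha, \beta]$ bounding the puncture, and $C = [A, D]$ is the decomposition furnished by the definition of $\mc N_K$. This immediately produces an $\SL_2(\mc O_K)$-valued representation with peripheral monodromy $C$, so the remaining task is to choose a pants decomposition $\mc P$ for which $\rho$ is a $\mc P$-good point.

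The first attempt is $\mc P = \{\beta\}$. From the definition of $\mc N_K$, $\tr \rho(\beta) = v+v^{-1}$ lies outside $E$ (since $v \notin \mu_\infty$) and is distinct from $\pm 2$ (since $v - v^{-1}$ is a unit, so $v \neq \pm 1$). For the perfect fiber condition, the three boundary traces of the pair of pants $\Sigma_{0,3} = \Sigma_{1,1}\setminus \beta$ are $(v+v^{-1}, v+v^{-1}, \tr C)$, and by Whang's irreducibility criterion (recalled in Remark \ref{rem:P-good}) the restriction of $\rho$ fails to be irreducible exactly when $\tr C \in \{2, v^2+v^{-2}\}$. Using the trace formula in Remark \ref{remark:trace-of-commutator-matrix}, these exceptional cases correspond to $a = 1$ and $a = 0$ respectively. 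When $a = 1$ one has $\tr C = 2$, so the exception $(g,n,k)=(1,1,2)$ in the definition of perfect fiber applies; when $a \notin \{0, 1\}$ the restriction is genuinely irreducible. In both subcases, $\rho$ is $\mc P$-good with $\mc P = \{\beta\}$.

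The case $a = 0$ requires a separate pants decomposition. The plan is to take $\mc P = \{\alpha\beta^n\}$, which represents a simple closed curve on $\Sigma_{1,1}$ (since $\alpha\beta^n$ is a primitive element of the free group $\pi_1(\Sigma_{1,1})$), for a suitable $n \in \mb Z$. A direct matrix computation using $a = 0$ yields $\tr \rho(\alpha\beta^n) = \tr(AD^n) = v^{-n}$. Now $v^{-n}$ is a unit, hence never $\pm 2$; since $v$ has infinite order and $E \cap K$ is finite, only finitely many $n$ give $v^{-n} \in E$; and the analogous reducibility analysis for $\Sigma_{1,1}\setminus \alpha\beta^n$ (whose boundary traces become $(v^{-n}, v^{-n}, v^2+v^{-2})$) shows that the perfect fiber condition fails for at most two further values of $n$, namely those satisfying $v^{-n} = \pm(v+v^{-1})$. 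Any $n$ avoiding all these finite exceptional sets produces the required $\mc P$-good representation.

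The main obstacle is the case $a = 0$: the natural pants decomposition $\{\beta\}$ gives a reducible restriction to $\Sigma_{0,3}$, and the fix is to cut along a nearby slope $(1, n)$ on the once-punctured torus, exploiting the parametrization of simple closed curves on $\Sigma_{1,1}$ by primitive elements of $\pi_1$.
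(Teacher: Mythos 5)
Your proof is correct, but it takes a genuinely different route from the paper. The paper's proof exploits the unit-scaling identity of \autoref{remark:unit-scaling-nk}: replacing $A = \begin{pmatrix} a & u^{-1}(a-1) \\ u & 1 \end{pmatrix}$ by $A_\lambda = \begin{pmatrix} a\lambda & u^{-1}(a-1)\lambda^{-1} \\ u\lambda & \lambda^{-1} \end{pmatrix}$ leaves the commutator $C$ unchanged while making $\tr A_\lambda = a\lambda + \lambda^{-1}$ range over an infinite set as $\lambda$ runs through $\mathcal{O}_K^\times$. Setting $\rho(\beta) = A_\lambda$ and $\mathcal{P}=\{\beta\}$, the paper then chooses $\lambda$ so that $\tr A_\lambda \notin E$ and $(\tr A_\lambda)^2 \neq 2 + \tr C$ simultaneously, which disposes of all cases uniformly with a single pants decomposition. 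You instead freeze the generators at the matrices appearing in the definition of $\mathcal{N}_K$ (so $\tr_\beta = v + v^{-1}$ is fixed) and compensate by varying the pants decomposition: $\{\beta\}$ works for $a \neq 0$ (with the $(1,1,2)$ exception absorbing $a = 1$), and for $a = 0$ you pass to $\{\alpha\beta^n\}$, computing $\tr(AD^n) = v^{-n}$ and using the infinite order of $v$ together with finiteness of $E \cap K$ to dodge the bad locus. Your reducibility analysis in both decompositions is correct, and the case split $a = 0, 1$ matches the trace formula \eqref{equation:trace-formula} as you claim. One small imprecision: simple closed curves on $\Sigma_{1,1}$ are not parametrized by arbitrary primitive elements of $\pi_1$; the cleaner justification that $\alpha\beta^n$ is simple is that it is $\tau_\beta^n(\alpha)$, the image of the simple curve $\alpha$ under a power of the Dehn twist about $\beta$. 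The paper's unit-scaling argument is shorter and avoids the case distinction; yours has the merit of exhibiting more explicitly how the geometry of the once-punctured torus (the family of slopes) interacts with the arithmetic constraints.
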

\begin{proof}
	The fundamental group of $\Sigma_{1,1}$ is generated by paths $\alpha, \beta, \gamma$ with the condition $[\alpha, \beta]\gamma=1$, where $\gamma$ is the path going around the puncture. By \autoref{remark:unit-scaling-nk}, there exists $A,B \in\SL_2(\mc O_K)$ such that $[A,B]=C$, $\tr A \notin E$ and $(\tr A)^2 \neq 2+\tr C$. Let $\mc P = \{ \beta \}$ and consider the representation $\rho \colon \pi_1(\Sigma_{1,1}) \to \SL_2(\mc O_K)$ defined by $\rho(\alpha)=B$, $\rho(\beta)=A$ and $\rho(\gamma)=C$. We claim that $\rho$ is $\mc P$-good. If $\tr C = 2$, we are done by \autoref{definition:perfect-fiber} (since we are in the $(g,n,k)=(1,1,2)$ case) and the fact that $\tr A \notin E$. If $\tr C \neq 2$, we should check that \eqref{equation:numerical-condition-irreducibility} is satisfied by $\tr A, \tr A, \tr C$. Indeed \eqref{equation:numerical-condition-irreducibility} can be rewritten as
	\begin{equation*}
		((\tr A)^2 - (2+\tr C))(2-\tr C) \neq 0,
	\end{equation*}
	which is true by the hypotheses on $\tr A$ and $\tr C$.
\end{proof}

We may now construct integral $\mc P$-good representations on surfaces with one puncture:
\begin{proposition}\label{proposition:existence-representation-positive-genus}
	Let $K$ be a number field, $g \ge 1$ and $M \in \mc N_K$ such that $\tr M \notin E$. Assume that $\mc O_K^{\times}$ is infinite. There exists a pants decomposition $\mathcal{P}$ of $\Sigma=\Sigma_{g,1}$ and a $\mc P$-good representation $\rho \colon \pi_1(\Sigma) \to \SL_2(\mc O_K)$ with monodromy $M$ at the puncture.
\end{proposition}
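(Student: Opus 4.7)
The plan is to induct on $g$, with the base case $g=1$ following immediately from \autoref{lemma:existence-integral-representation-torus} applied to $C=M$, which is valid since $M\in \mc N_K$ by hypothesis.

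For the inductive step ($g\geq 2$), I would split $\Sigma_{g,1}$ along a separating simple closed curve $\gamma$ into two subsurfaces $\Sigma_{1,2}\cup_{\gamma}\Sigma_{g-1,1}$, arranging so that the puncture of $\Sigma_{g,1}$ lies on the $\Sigma_{1,2}$ side. By \autoref{remark:trace-of-commutator-matrix}, $M\in\mc M_K$, so \autoref{lemma:existence-representation-two-holed-torus} produces a pants decomposition $\mc P_1$ of $\Sigma_{1,2}$ and a $\mc P_1$-good integral representation $\rho_1$ with monodromy $M$ at the original puncture and $M^{-1}$ at $\gamma$. The inductive hypothesis applied to $(\Sigma_{g-1,1}, M)$ (using $M\in\mc N_K$ and $\tr M\notin E$) yields a pants decomposition $\mc P_2$ and a $\mc P_2$-good integral representation $\rho_2$ on $\Sigma_{g-1,1}$ with monodromy $M$ at the puncture. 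With $\gamma$ oriented as the boundary of $\Sigma_{1,2}$ and oppositely as the boundary of $\Sigma_{g-1,1}$, the boundary monodromies of $\rho_1$ and $\rho_2$ agree exactly as matrices in $\SL_2(\mc O_K)$, so van Kampen's theorem produces a representation $\rho\colon \pi_1(\Sigma_{g,1})\to \SL_2(\mc O_K)$ with the required puncture monodromy $M$.

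Taking $\mc P\coloneqq \mc P_1\cup\{\gamma\}\cup \mc P_2$ as a pants decomposition of $\Sigma_{g,1}$, it remains to verify $\rho$ is $\mc P$-good. The trace along any $a\in \mc P_i$ is not $\pm 2$ by the $\mc P_i$-goodness of $\rho_i$, and the trace along $\gamma$ is $\tr M\notin E\supset\{\pm 2\}$. Each component of $\Sigma_{g,1}\setminus \mc P$ is a component of either $\Sigma_{1,2}\setminus \mc P_1$ or $\Sigma_{g-1,1}\setminus \mc P_2$, so the irreducibility condition (or the $(1,1,2)$-exception) is inherited from the $\mc P_i$-goodness of the $\rho_i$.

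The main point of delicacy is ensuring that the gluing preserves integrality, which requires the boundary monodromies to match \emph{exactly} as matrices in $\SL_2(\mc O_K)$ rather than merely up to conjugation. This is precisely why $\mc N_K\subset \mc M_K$ was designed as it was, together with the closure of $\mc N_K$ under inversion: the lemmas in this section were engineered so that the monodromies produced along gluing curves lie in a class supporting further gluing, powering the induction without needing to pass to any field extension.
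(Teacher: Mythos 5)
Your proposal is correct and follows essentially the same route as the paper's proof: induction on $g$, with base case from \autoref{lemma:existence-integral-representation-torus}, and inductive step splitting off a $\Sigma_{1,2}$ via \autoref{lemma:existence-representation-two-holed-torus} and gluing along the separating curve using the inductive hypothesis. The only cosmetic difference is bookkeeping: the paper invokes the inductive hypothesis with $M^{-1}\in\mc N_K$ to match $M^{-1}$ on both sides of $\gamma$, while you invoke it with $M$ and flip the orientation of $\gamma$ when passing to the other subsurface---these are equivalent since $\mc N_K$ is closed under inversion and $\tr M=\tr M^{-1}$.
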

\begin{proof}
	We proceed by induction on $g$. When $g=1$, this is \autoref{lemma:existence-integral-representation-torus}. If $g>1$, consider a separating path $\gamma$ cutting $\Sigma$ into two surfaces $\Sigma_1$ and $\Sigma_2$, where $\Sigma_1$ is a surface of type $(1,2)$ containing the puncture of $\Sigma$ (and with the other puncture along $\gamma$), and $\Sigma_2$ is of type $(g-1,1)$ with a puncture along $\gamma$. By \autoref{lemma:existence-representation-two-holed-torus} there exists an integral representation of $\pi_1(\Sigma_1)$ with monodromy $M$ along the puncture of $\Sigma$ and monodromy $M^{-1}$ along $\gamma$, and satisfying the conditions of \autoref{proposition:existence-representation-positive-genus}. Since $M^{-1} \in \mc N_K$, by the induction hypothesis there exists an integral representation on $\Sigma_2$ satisfying the conditions of \autoref{proposition:existence-representation-positive-genus}, and with monodromy $M^{-1}$ along $\gamma$. We may then glue the two representation and obtain the sought integral representation on $\Sigma$.
\end{proof}

\autoref{proposition:existence-representation-positive-genus} does not cover the case where the trace of the monodromy at the puncture belongs to $E$. In order to treat this case, we need the following:
\begin{lemma}\label{lemma:induction-step-family-N}
	Let $k\in \mc O_K \cap E$ and let $A \in \mc M_K$ with $\tr A \notin E$. Assume that there exists $v \in \mc O_K^{\times} \setminus \mu_{\infty}$ for which $v-v^{-1} \in \mc O_K^{\times}$. Then there exists a quadratic extension $L$ of $K$ and matrices $M \in \SL_2(\mc O_L)$ and $B \in \mc N_L$ such that $ABM=I$, $\tr M = k$, $\tr B \notin E$ and the traces of $A,B$ and $M$ satisfy the condition of \eqref{equation:numerical-condition-irreducibility}.
\end{lemma}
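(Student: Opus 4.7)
The plan mirrors that of \autoref{lemma:induction-step}, but restricts $B$ to lie in $\mc N_L$. Setting $M = (AB)^{-1}$, the condition $\tr M = k$ becomes $\tr AB = k$. Write $A = \bigl(\begin{smallmatrix} p & q \\ r & s \end{smallmatrix}\bigr)$ with $r \in \mc O_K^\times$ and $q = r^{-1}(ps-1)$, and fix $v$ as in the hypothesis. By \autoref{remark:trace-of-commutator-matrix}, any $B \in \mc N_L$ with this choice of $v$ is parameterized by $(u, a)$ with $u \in \mc O_L^\times$ and $a \in \mc O_L$.

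Because $r$, $1-v^2$ and $1-v^{-2}$ are units in $\mc O_K$ (the latter two follow from $v-v^{-1} \in \mc O_K^\times$, using $1-v^{-2} = v^{-2}(v^2-1)$), the equation $\tr AB = k$ can be rearranged, after dividing by the unit leading coefficient, into a monic quadratic
\begin{equation*}
  a^2 + \alpha(u)\, a + \beta(u) = 0,
\end{equation*}
where $\alpha(u) = u(s-pv^{-2})/r - 1$ and $\beta(u) = u(pv^{-2}+sv^2-k)/(r(1-v^2)) - qu^2 v^{-2}/r$. For any $u \in \mc O_K^\times$, both $\alpha(u), \beta(u) \in \mc O_K$, so the two roots $a_u$ lie in $\mc O_L$ for some quadratic extension $L/K$. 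Setting $B$ to be the corresponding element of $\mc N_L$ and $M := (AB)^{-1}$, we obtain $M \in \SL_2(\mc O_L)$ with $\tr M = k$ by construction.

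It remains to choose $u \in \mc O_K^\times$ so that $\tr B \notin E$ and condition \eqref{equation:numerical-condition-irreducibility} holds for $\tr A, \tr B, k$. By \eqref{equation:trace-formula}, $\tr B = -a_u(v-v^{-1})^2 + v^2 + v^{-2}$. The set of ``forbidden'' values of $\tr B$ is finite: \eqref{equation:numerical-condition-irreducibility} excludes at most two values of $\tr B$, while $\tr B \in E$ means $\tr B = \zeta + \zeta^{-1}$ for a root of unity $\zeta$; since $\tr B$ lies in a quadratic extension of $K$, any such $\zeta$ satisfies $[\mb Q(\zeta):\mb Q] \leq 4[K:\mb Q]$, and only finitely many roots of unity have bounded degree over $\mb Q$. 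For each forbidden value of $\tr B$, the corresponding $a_u$ is determined, and substituting into $a_u^2 + \alpha(u) a_u + \beta(u) = 0$ yields a polynomial equation in $u$ of degree $\leq 2$. Hence only finitely many $u \in \mc O_K^\times$ are bad, and since $\mc O_K^\times$ is infinite (containing $v$ of infinite order) a good $u$ exists.

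The main obstacle is the degenerate possibility that $a_u$ is identically constant in $u$, in which case the above counting argument collapses because a single value of $\tr B$ would correspond to all of $\mc O_K^\times$. This scenario would force both $\alpha(u)$ and $\beta(u)$ to be constant in $u$, hence $s = pv^{-2}$ and $q = 0$. Combining $q = r^{-1}(ps-1) = 0$ with $s = pv^{-2}$ gives $p^2 = v^2$, so $p = \pm v$, and then the vanishing of the $u$-coefficient of $\beta$ forces $k = pv^{-2}+sv^2 = \pm(v+v^{-1})$. But $\pm(v+v^{-1}) \in E$ would force $v \in \mu_\infty$, contradicting the choice of $v$. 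Thus the degenerate case does not occur under our hypotheses, and the plan goes through.
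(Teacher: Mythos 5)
Your overall strategy is the same as the paper's (take $B\in\mc N_L$ parameterized by $(u,a)$, reduce $\tr AB=k$ to a monic quadratic in $a$, then vary $u$ to avoid the finitely many forbidden values of $\tr B$), but the counting step has a gap. You argue that for each forbidden $a_0$, the equation $a_0^2+\alpha(u)a_0+\beta(u)=0$ is a polynomial in $u$ of degree $\le 2$, hence has finitely many solutions, and that the only way this can fail is if $\alpha$ and $\beta$ are \emph{both} constant in $u$ (which you correctly rule out). But the polynomial in $u$ can be identically zero for a particular $a_0$ even when $\alpha,\beta$ are not both constant. Concretely, if $q=0$, $a_0=0$, and $pv^{-2}+sv^2=k$ (equivalently, writing $s=p^{-1}$, $pv^{-2}+p^{-1}v^2=k$, which forces $pv^{-2}\in\mu_\infty$ since $k\in E$), then $\beta\equiv 0$ even though $\alpha$ may be non-constant, and $a_0=0$ is a root of the quadratic for \emph{every} $u$. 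One checks that $a=0$ yields $\tr B=v^2+v^{-2}$ together with both $A$ and $B$ lower-triangular, so this root \emph{is} forbidden by \eqref{equation:numerical-condition-irreducibility}. Thus in this scenario every $u$ has a forbidden root, and your conclusion ``only finitely many $u\in\mc O_K^\times$ are bad'' is not justified.

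The fix is to argue as the paper does: because the coefficients of the monic quadratic in $a$ are non-constant in $u$, the \emph{set of all roots} $\bigcup_u\{a_1(u),a_2(u)\}$ is infinite (in the problematic scenario above, the second root $-\alpha(u)$ still varies), and since only finitely many $a$-values are forbidden (by Northcott/bounded-degree for the $E$-condition, and because \eqref{equation:numerical-condition-irreducibility} is a quadratic constraint in $a$), there exists a good pair $(u,a)$. Your non-degeneracy argument ruling out $\alpha,\beta$ both constant is itself correct, though it takes a slightly longer route: you use the $\beta$-coefficient condition $q=0$ to deduce $p=\pm v$, hence $k=\pm(v+v^{-1})$, and invoke $v\notin\mu_\infty$; the paper notices that the conditions already force $\tr A=k$, contradicting $\tr A\notin E$ against $k\in E$ directly, without needing $q=0$.
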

\begin{proof}
	We argue as in \autoref{lemma:induction-step}. Let $A= \begin{pmatrix} x & y \\ z & w  \end{pmatrix} \in \mc M_K$, so that $z \in \mc O_K^{\times}$. We will choose $L$ and $B \in \mc N_L$ of the form
	\begin{equation*}
		B=\begin{pmatrix}
			a(1-v^{-2})+ v^{-2} & u^{-1}a(a-1)(1-v^2)\\[5pt]
			u(1-v^{-2}) & a(1-v^2)+v^2
		\end{pmatrix}.
	\end{equation*}
	Our aim is to pick $a \in \mc O_L$ and $u \in \mc O_L^{\times}$ so that $\tr AB=k$, where $L$ is a suitable quadratic extension of $K$. The equation $\tr AB=k$ is
	\begin{equation}\label{equation:trace-of-product}
		x(a(1-v^{-2})+ v^{-2})+yu(1-v^{-2})+zu^{-1}a(a-1)(1-v^2)+w(a(1-v^2)+v^2)=k
	\end{equation}
	After dividing by $z(1-v^2)u^{-1}$ (which is a unit), this becomes a monic quadratic equation in $a$ with $\mc O_K$-integral coefficients. Therefore, for any $u \in \mc O_K^{\times}$, there exists a quadratic extension $L$ of $K$ and $a \in \mc O_L$ that solves \eqref{equation:trace-of-product}. Moreover, we claim that the coefficients of this quadratic equation are non-constant polynomials in $u$. Indeed if this was not the case, one would have that
	\begin{equation*}
		\begin{cases}
			x+w=xv^{-2}+wv^2\\
			x+w=k\\
		\end{cases}
	\end{equation*}
	which would imply that $\tr A = k \in E$, contradicting the hypotheses. In particular, as $u$ varies in $\mc O_K^{\times}$, the $a$'s solving \eqref{equation:trace-of-product} vary in an infinite set.
	
	In order to complete the proof, we just need to show that by choosing a suitable $u \in \mc O_K^{\times}$ we can ensure that $\tr B \notin E$ and that the traces of $A,B$ and $M$ satisfy \eqref{equation:numerical-condition-irreducibility}.
	
	We claim that only for finitely many such $u$'s we have $\tr B \in E$. Indeed, if $a$ solves \eqref{equation:trace-of-product}, then $[\mb Q(a) : \mb Q] \le 2[K : \mb Q] $ and, if $\tr B \in E$, then from \eqref{equation:trace-formula} one deduces that the Weil height $h(a)$ is bounded, so Northcott's theorem implies that there are only finitely many choices of $u$ with $\tr B\in E$.
	
	Similarly, substituting the traces of $A, B$ and $M$ in \eqref{equation:numerical-condition-irreducibility}, one finds a quadratic equation in $a$ with constant coefficients, so for all but (at most) two values of $a$, the condition of \eqref{equation:numerical-condition-irreducibility} is satisfied.
	
	The statement follows by picking any $u \in \mc O_K^{\times}$ such that the corresponding $a$ does not belong to the finite set where the two conditions are not satisfied.
\end{proof}
\begin{proposition}\label{proposition:doubly-punctured-torus}
	Fix $k \in \mc O_K \cap E$ and $M \in \mc M_K$ with $\tr M \notin E$. Assume that there exists $v \in \mc O_K^{\times} \setminus \mu_{\infty}$ for which $v-v^{-1} \in \mc O_K^{\times}$. Then there exists a quadratic extension $L$ of $K$, a pants decomposition $\mc P$ of $\Sigma_{1,2}$ and a $\mc P$-good representation $\rho \colon \pi_1(\Sigma_{1,2}) \to \SL_2(\mc O_L)$ such that the monodromy along one puncture is $M$ and the monodromy along the other puncture has trace $k$.
\end{proposition}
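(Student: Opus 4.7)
The strategy is to cut $\Sigma_{1,2}$ along a simple closed curve $\gamma$ that separates it into a pair of pants $\Sigma_{0,3}$ carrying both original punctures (with the third boundary component being $\gamma$) and a once-punctured torus $\Sigma_{1,1}$ whose single puncture is identified with $\gamma$. We will then build compatible integral representations on the two pieces and glue them.

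First, I would apply \autoref{lemma:induction-step-family-N} with input matrix $M$ (which satisfies the hypothesis $\tr M\notin E$) and target trace $k\in\mc O_K\cap E$. This produces a quadratic extension $L/K$, a matrix $N\in\SL_2(\mc O_L)$ with $\tr N=k$, and a matrix $B\in\mc N_L$ with $\tr B\notin E$, such that $M B N=I$ and such that the triple $(\tr M,\tr B,k)$ satisfies the irreducibility inequality \eqref{equation:numerical-condition-irreducibility}. The relation $M B N=I$ defines an integral representation $\rho_1\colon\pi_1(\Sigma_{0,3})\to \SL_2(\mc O_L)$ sending the three standard boundary generators to $M,B,N$; absolute irreducibility of $\rho_1$ then follows from \eqref{equation:numerical-condition-irreducibility} via \cite[Lemma $3.3$]{whang:ant}.

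Second, since $\mc N_L$ is closed under inversion, $B^{-1}\in\mc N_L$. Applying \autoref{lemma:existence-integral-representation-torus} with $C=B^{-1}$ yields a pants decomposition $\mc P'$ of $\Sigma_{1,1}$ together with a $\mc P'$-good integral representation $\rho_2\colon\pi_1(\Sigma_{1,1})\to \SL_2(\mc O_L)$ whose peripheral monodromy is $B^{-1}$. Because the two boundary components of $\Sigma_{0,3}$ and $\Sigma_{1,1}$ identified along $\gamma$ carry opposite induced orientations, the compatibility $\rho_1(\gamma)\cdot\rho_2(\gamma)=B\cdot B^{-1}=I$ is exactly what is needed for $\rho_1$ and $\rho_2$ to glue to a well-defined representation $\rho\colon\pi_1(\Sigma_{1,2})\to \SL_2(\mc O_L)$ with monodromy $M$ at the first puncture and monodromy $N$ of trace $k$ at the second.

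Finally, I would verify that, with respect to the pants decomposition $\mc P\coloneqq\{\gamma\}\cup \mc P'$ of $\Sigma_{1,2}$ (which has the correct cardinality $2=3g-3+n$), the representation $\rho$ is $\mc P$-good in the sense of \autoref{defn:p-good}. The two conditions of \autoref{definition:perfect-fiber} are: (i) no trace along a curve of $\mc P$ equals $\pm 2$, which holds along $\gamma$ because $\tr B\notin E\supset\{\pm 2\}$ and along the curve in $\mc P'$ by the $\mc P'$-goodness of $\rho_2$; and (ii) irreducibility of the restriction to each component of $\Sigma_{1,2}\setminus\mc P$ (up to the $(1,1,2)$ exception). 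The restriction to the pair of pants containing the two original punctures is exactly $\rho_1$, which is irreducible by construction; irreducibility on the pair(s) of pants inside $\Sigma_{1,1}$ (or entry into the $(1,1,2)$ clause) follows from the $\mc P'$-goodness of $\rho_2$. The main content here is packaged in \autoref{lemma:induction-step-family-N}, which used a Northcott argument to force $\tr B\notin E$ while solving $\tr(MB)^{-1}=k$; once that lemma is in hand, the remaining work is the topological gluing together with a routine perfectness check.
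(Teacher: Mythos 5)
Your proof is correct and follows essentially the same strategy as the paper: cut $\Sigma_{1,2}$ along a separating curve $\gamma$ into a pair of pants $\Sigma_{0,3}$ (carrying both original punctures) and a once-punctured torus $\Sigma_{1,1}$, build the $\Sigma_{0,3}$-piece using Lemma~\ref{lemma:induction-step-family-N}, build the $\Sigma_{1,1}$-piece using Lemma~\ref{lemma:existence-integral-representation-torus}, and glue. The only differences are cosmetic: you track the boundary-orientation convention explicitly (hence invoke closure of $\mc N_L$ under inversion to use $B^{-1}$ on the torus side, whereas the paper glues with the same matrix $N$), and you spell out the $\mc P$-goodness verification that the paper leaves implicit.
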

\begin{proof}
	Take a separating path $\gamma$ around the two punctures of $\Sigma_{1,2}$, so that $\gamma$ cuts $\Sigma_{1,2}$ into surfaces $\Sigma_1$ and $\Sigma_2$ of type $(0,3)$ and $(1,1)$, respectively, so that the puncture of $\Sigma_2$ corresponds to $\gamma$.
	
	By \autoref{lemma:induction-step-family-N}, there exists a quadratic extension $L$ of $K$ and an irreducible  $\SL_2(\mc O_L)$-local system $\rho_1$ on $\Sigma_1$, such that the monodromy of $\rho_1$ along the puncture corresponding to $\gamma$ is a matrix $N \in \mc N_L$ with $\tr N \notin E$, and at another puncture the monodromy is $M$, and at the last puncture it has trace $k$.
	
	By \autoref{lemma:existence-integral-representation-torus}, there exists a pants decomposition $\mc P'$ of $\Sigma_2$ and a $\mc P'$-good  $\SL_2(\mc O_L)$-local system $\rho_2$ on $\Sigma_2$ with monodromy $N$ along the puncture.
	
	Let $\mc P \coloneqq \{\gamma \} \cup \mc P'$ be a pants decomposition of $\Sigma_{1,2}$. Then,
	gluing $\rho_1$ and $\rho_2$, we obtain a $\mc P$-good  $\SL_2(\mc O_L)$-representation on $\Sigma_{1,2}$ satisfying the sought conditions. 
\end{proof}

\section{Proof of potential density}\label{sec:potential-density-sl2}
In this section we prove potential density of integral points on relative $\SL_2$-character varieties of surface groups.

We first construct integral $\mathcal{P}$-good representations in general:
\begin{proposition}\label{proposition:existence-good-representation}
	Let $\Sigma$ be a surface of type $(g,n)$ with $3g-3+n>0$, $K$ be number field and $k=(k_1,\dots,k_n) \in (\mc O_K)^n$. Assume that there exists $v \in \mc O_K^{\times} \setminus \mu_{\infty}$ for which $v-v^{-1} \in \mc O_K^{\times}$.
	Then there exists a quadratic extension $L$ of $K$, a pants decomposition $\mathcal{P}$ and a $\mc P$-good representation $\rho \colon \pi_1(\Sigma) \to \SL_2(\mc O_L)$ such that $[\rho] \in X_{g, n, k}(\mc O_L)$.
\end{proposition}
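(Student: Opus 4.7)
The plan is to proceed by case analysis on $(g, n)$, constructing $\rho$ by choosing a separating curve $\gamma$, producing integral $\mc P$-good representations on the two components using the building blocks of Section~\ref{section:construction_integral_representation}, and gluing. The hypothesis on $v$ ensures $\mc O_K^\times$ is infinite and, via \autoref{remark:special-unit} together with \autoref{remark:trace-of-commutator-matrix}, that $\mc N_K$ contains matrices of infinitely many distinct traces. Since $E \cap K$ is finite, at each step we may select some $M \in \mc N_K \subset \mc M_K$ whose trace avoids $E$ and any further prescribed finite set of bad values.

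The case $g = 0$ (so $n \ge 4$) is immediate from \autoref{proposition:existence-representation-punctured-sphere} after choosing $M \in \mc N_K$ with $\tr M = k_n$ via \autoref{remark:trace-of-commutator-matrix}. The case $g \ge 2$, $n = 0$ is handled by decomposing $\Sigma = \Sigma_{g_1,1} \cup_\gamma \Sigma_{g_2,1}$ with $g_1, g_2 \ge 1$, picking $M \in \mc N_K$ with $\tr M \notin E$, and applying \autoref{proposition:existence-representation-positive-genus} on each side with monodromies $M$ and $M^{-1}$ (using that $\mc N_K$ is closed under inversion).

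The case $g \ge 1$, $n = 1$ further splits: if $g = 1$, \autoref{lemma:existence-integral-representation-torus} applied to $C \in \mc N_K$ with $\tr C = k_1$ suffices (no restriction on $\tr C$ is required); if $g \ge 2$ and $k_1 \notin E$, \autoref{proposition:existence-representation-positive-genus} applies directly with $M \in \mc N_K$, $\tr M = k_1$; if $g \ge 2$ and $k_1 \in E$, decompose $\Sigma_{g,1} = \Sigma_{1,2} \cup_\gamma \Sigma_{g-1,1}$ with the puncture in the $(1,2)$ piece, and combine \autoref{proposition:doubly-punctured-torus} on that piece (peripheral trace $k_1$, auxiliary monodromy $M \in \mc N_K$ with $\tr M \notin E$) with \autoref{proposition:existence-representation-positive-genus} on the $(g-1,1)$ piece with monodromy $M^{-1}$. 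Finally, for $g \ge 1$, $n \ge 2$: when $n \ge 3$, cut $\Sigma$ into $\Sigma_{0,n+1}$ (holding all original punctures) and $\Sigma_{g,1}$, apply \autoref{proposition:existence-representation-punctured-sphere} on the genus-zero side with auxiliary $M \in \mc N_K$, $\tr M \notin E$, and handle $\Sigma_{g,1}$ with monodromy $M^{-1}$ via the $(g,1)$ case just treated; when $n = 2$, replace the genus-zero side by a pair of pants handled by \autoref{corollary:existence-integral-representation-pant}, choosing $\tr M$ to also satisfy \eqref{equation:numerical-condition-irreducibility} (a nonvanishing polynomial constraint in $\tr M$).

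In every case, concatenating the pants decompositions from both subsurfaces with the gluing curve $\gamma$ yields a pants decomposition $\mc P$ of $\Sigma$. $\mc P$-goodness of the glued representation follows because the subsurface representations are themselves $\mc P$-good for their respective decompositions, while at $\gamma$ the conditions $\tr M \notin E$ and \eqref{equation:numerical-condition-irreducibility} enforce the required trace-avoidance and irreducibility on the two pairs of pants adjacent to $\gamma$. At most one of the invoked constructions contributes a quadratic extension of $K$, so $L/K$ is quadratic as claimed. The main technical point to verify carefully is the coordination of the $E$-avoidance and irreducibility conditions at the gluing curve, which is enabled by the finiteness of $E \cap K$ and the availability of infinitely many traces in $\mc N_K$.
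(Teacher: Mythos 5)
Your proposal is correct and follows essentially the same case-by-case strategy as the paper's own proof: cut $\Sigma$ along a separating curve (or curves), construct $\mc P$-good integral representations on the pieces using \autoref{proposition:existence-representation-punctured-sphere}, \autoref{proposition:existence-representation-positive-genus}, \autoref{proposition:doubly-punctured-torus}, \autoref{lemma:existence-integral-representation-torus}, and \autoref{corollary:existence-integral-representation-pant}, matching the monodromy along the gluing curve by choosing $M\in\mc N_K$ with $\tr M$ outside $E$ and any finite bad set, and then glue. The only differences from the paper are cosmetic (e.g.~the paper fixes $g_1=1$ in the $n=0$ splitting where you allow arbitrary $g_1,g_2\ge 1$, and you make the orientation convention $M$ versus $M^{-1}$ at $\gamma$ explicit), and your accounting of at most one quadratic extension is also exactly what the paper does.
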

\begin{proof}
	Recall that the hypothesis on $K$ implies that $\mc N_K$ and $\mc O_K^{\times}$ are infinite. We will distinguish various cases depending on the values of $n$ and $g$.
	
	If $n=0$, then $g \ge 2$. Take a separating path $\gamma$ cutting $\Sigma$ into two surfaces $\Sigma_1$ and $\Sigma_2$, where $\Sigma_1$ is a surface of type $(1,1)$ with a puncture along $\gamma$ and $\Sigma_2$ is of type $(g-1,1)$ with a puncture along $\gamma$. Pick $M \in \mc N_K$ with $\tr M \notin E$ (this always exists by \autoref{remark:trace-of-commutator-matrix}); by \autoref{proposition:existence-representation-positive-genus} there exist good integral representations on $\Sigma_1$ and $\Sigma_2$ with monodromy $M$ along $\gamma$, so that the claim follows by gluing.
	
	If $n=1$ and $g=1$, by \autoref{remark:trace-of-commutator-matrix} there exists $M \in \mc N_K$ such that $\tr M=k$, and the claim follows from \autoref{lemma:existence-integral-representation-torus}. If $n=1$, $g>1$ and $k \notin E$, then the same reasoning works by using \autoref{proposition:existence-representation-positive-genus}. Instead, if $k \in E$, we pick $M \in \mc N$ with $\tr M \notin E$ and we consider a separating path $\gamma$ cutting $\Sigma$ into two surfaces $\Sigma_1$ and $\Sigma_2$, where $\Sigma_1$ is a surface of type $(1,2)$ containing the puncture of $\Sigma$ (and with the other puncture along $\gamma$), and $\Sigma_2$ is of type $(g-1,1)$ with a puncture along $\gamma$. Applying \autoref{proposition:doubly-punctured-torus} to $\Sigma_1$ and \autoref{proposition:existence-representation-positive-genus} to $\Sigma_2$, and possibly taking a quadratic extension of $K$, we find good integral representations with monodromy $M$ along $\gamma$, and the claim follows by gluing.
	
	If $n=2$, then $g>0$, so that we can take a separating path $\gamma$ cutting $\Sigma$ into two surfaces $\Sigma_1$ and $\Sigma_2$, where $\Sigma_1$ is a pair of pants containing the two punctures of $\Sigma$ (and with the other puncture along $\gamma$), and $\Sigma_2$ is of type $(g,1)$ with a puncture along $\gamma$. Pick any $M \in \mc N_K$ such that $\tr M$, $k_1$ and $k_2$ satisfy \eqref{equation:numerical-condition-irreducibility} and $\tr M \notin E$. The claim then follows applying \autoref{corollary:existence-integral-representation-pant} and \autoref{proposition:existence-representation-positive-genus} and gluing the constructed representations with monodromy $M$ along $\gamma$.
	
	If $n \ge 3$ and $g=0$, then $n \ge 4$. Pick $M \in \mc M_K$ such that $\tr M = k_n$: applying \autoref{proposition:existence-representation-punctured-sphere} we find a good integral representation satisfying the sought-after trace conditions at the punctures, possibly defined over a quadratic extension of $K$.
	
	If $n \ge 3$ and $g>0$, take a separating path $\gamma$ cutting $\Sigma$ into two surfaces $\Sigma_1$ and $\Sigma_2$, where $\Sigma_1$ is of type $(0,n+1)$ and contains all the $n$ punctures of $\Sigma$ (and with the other puncture along $\gamma$), and $\Sigma_2$ is of type $(g,1)$ with a puncture along $\gamma$. Pick any $M \in \mc N_K$ with $\tr M \notin E$. The claim follows by applying \autoref{proposition:existence-representation-punctured-sphere} and \autoref{proposition:existence-representation-positive-genus} and gluing the constructed representations with monodromy $M$ along $\gamma$.
\end{proof}
Combining \autoref{proposition:existence-good-representation} and \autoref{proposition:zariski-dense-orbit} we obtain:
\begin{theorem}\label{theorem:point-with-dense-orbit}
	Let $\Sigma$ be a surface of type $(g,n)$ with $3g-3+n>0$, $K$ be number field, and $k=(k_1,\dots,k_n) \in (\mc O_K)^n$. There exists $p \in X_{g,n,k}(\overline{\mb Z})$ such that $\Gamma_{g,n} \cdot p$ is Zariski dense in $X_{g,n,k, \overline{\mathbb{Q}}}$.
\end{theorem}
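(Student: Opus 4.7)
The plan is simply to combine the two preceding results: \autoref{proposition:existence-good-representation} produces an integral $\mathcal{P}$-good point, and \autoref{proposition:zariski-dense-orbit} guarantees that any $\mathcal{P}$-good point has Zariski-dense pure mapping class group orbit. So at the level of strategy there is essentially no new content—the main job is to remove the arithmetic hypothesis on $K$ appearing in \autoref{proposition:existence-good-representation}.

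That hypothesis requires a unit $v \in \mc O_K^{\times}\setminus \mu_\infty$ for which $v-v^{-1}$ is also a unit, which can fail for the given $K$ (e.g.\ for $K=\mb Q$ or $K$ imaginary quadratic). The fix is to enlarge $K$: set $K':= K(\sqrt{5})$, so that the golden ratio $v:=(1+\sqrt{5})/2 \in \mc O_{K'}^{\times}$ is not a root of unity and satisfies $v-v^{-1}=1 \in \mc O_{K'}^{\times}$, as noted in \autoref{remark:special-unit}. Since $k \in (\mc O_K)^n \subseteq (\mc O_{K'})^n$, we are free to work over $K'$ throughout.

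Applying \autoref{proposition:existence-good-representation} over the enlarged field $K'$, we obtain a quadratic extension $L/K'$, a pants decomposition $\mathcal{P}$ of $\Sigma$, and a $\mathcal{P}$-good representation $\rho\colon \pi_1(\Sigma) \to \SL_2(\mc O_L)$ with $[\rho] \in X_{g,n,k}(\mc O_L)$. In particular $p:=[\rho] \in X_{g,n,k}(\overline{\mb Z})$, since $\mc O_L \subset \overline{\mb Z}$. Finally, \autoref{proposition:zariski-dense-orbit} applied to this $\mathcal{P}$-good point $p$ yields that $\Gamma_{g,n}\cdot p$ is Zariski dense in $X_{g,n,k,\overline{\mb Q}}$, which is the conclusion.

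There is no genuine obstacle: the bulk of the work has already been carried out in assembling the gluing construction of \autoref{proposition:existence-good-representation} (which inducts on $(g,n)$ by cutting along separating curves and invoking the pant/once-punctured torus/two-holed torus building blocks) and in the inductive trace-density argument of \autoref{proposition:zariski-dense-orbit}. The only design choice here is the base-field enlargement; using $\mb Q(\sqrt{5})$ is natural because the golden ratio is the simplest totally real unit for which both $v$ and $v-v^{-1}$ are units, ensuring $\mc N_{K'}$ is nonempty and that the technical lemmas of \S\ref{section:construction_integral_representation} apply verbatim.
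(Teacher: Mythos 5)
Your proof is correct and matches the paper's intended argument exactly: the theorem is deduced by combining \autoref{proposition:existence-good-representation} and \autoref{proposition:zariski-dense-orbit}, with the base-field enlargement to $K(\sqrt{5})$ (so that the unit hypothesis holds, per \autoref{remark:special-unit}) being precisely the step the paper leaves implicit and later quantifies in \autoref{rem:degree-4} as yielding a biquadratic extension.
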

Before establishing \autoref{thm:main-theorem} for relative $\SL_2$-character varieties of surfaces, we separately treat the following special case:
\begin{lemma}\label{lemma:finite-orbit-torus}
	The surface $X \colon x^2+y^2+z^2-xyz-4=0$ contains a Zariski dense set of $\mb Z$-points.
\end{lemma}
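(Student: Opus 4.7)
The plan is to exploit the identification $X = X_{1,0}$ of \autoref{proposition:degenerate-character-varieties}. Since any commuting pair of semisimple matrices in $\SL_2$ is simultaneously diagonalizable over $\overline{\mathbb{Q}}$, there is a dominant morphism
\[
F \colon \mathbb{G}_m^2 \longrightarrow X,\qquad (\lambda,\mu)\longmapsto \bigl(\lambda+\lambda^{-1},\ \mu+\mu^{-1},\ \lambda\mu + \lambda^{-1}\mu^{-1}\bigr),
\]
realizing $X$ as the quotient of $\mathbb{G}_m^2$ by the involution $(\lambda,\mu)\mapsto (\lambda^{-1},\mu^{-1})$; one verifies directly that the image of $F$ lands in $X$. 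So to exhibit a Zariski-dense subset of $X(\mathbb{Z})$, it suffices to exhibit a Zariski-dense subset $S \subseteq \mathbb{G}_m^2(\overline{\mathbb{Z}})$ on which each of the three coordinate functions $\lambda+\lambda^{-1}$, $\mu+\mu^{-1}$ and $\lambda\mu+\lambda^{-1}\mu^{-1}$ takes rational integer values.

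To produce such an $S$, I would fix a non-torsion real quadratic unit with integer trace, for instance $\mu_0 \coloneqq \frac{3+\sqrt{5}}{2} \in \overline{\mathbb{Z}}^{\times}$, which satisfies $\mu_0 + \mu_0^{-1} = 3$. The linear recursion $T_{k+1} = 3T_k - T_{k-1}$ (with $T_0 = 2$, $T_1 = 3$) then guarantees that $\mu_0^k + \mu_0^{-k} \in \mathbb{Z}$ for every $k \in \mathbb{Z}$. Consequently the lattice
\[
S \coloneqq \bigl\{(\mu_0^m,\mu_0^n) : (m,n)\in \mathbb{Z}^2\bigr\} \subset \mathbb{G}_m^2(\overline{\mathbb{Z}})
\]
is mapped by $F$ into $X(\mathbb{Z})$, since each of $\mu_0^m+\mu_0^{-m}$, $\mu_0^n+\mu_0^{-n}$, and $\mu_0^{m+n}+\mu_0^{-(m+n)}$ is an integer.

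Finally, I would verify that $S$ is Zariski dense in $\mathbb{G}_m^2$. Because $\mu_0$ has infinite multiplicative order, $\{\mu_0^m : m\in\mathbb{Z}\}$ is Zariski dense in $\mathbb{G}_m$. If a Laurent polynomial $f(\lambda,\mu)$ vanishes on $S$, then for each fixed $n$ the Laurent polynomial $f(\lambda,\mu_0^n)$ vanishes on the dense set $\{\mu_0^m\}$, and hence vanishes identically in $\lambda$; each coefficient of $f$ (viewed as a polynomial in $\lambda$) is then a Laurent polynomial in $\mu$ which vanishes at all $\mu_0^n$, and therefore vanishes identically. So $f \equiv 0$. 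Dominance of $F$ then forces $F(S)\subseteq X(\mathbb{Z})$ to be Zariski dense in $X$, as desired. No real obstacle arises in this argument; the only genuinely arithmetic ingredient is the existence of a real quadratic unit of infinite order with integer trace, which is classical.
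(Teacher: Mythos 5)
Your proof is correct, and it takes a genuinely different route from the paper's. The paper uses the Dehn-twist automorphism $\tau_x(x,y,z)=(x,z,xz-y)$ (which fixes $x$ and acts on $(y,z)$ by $\begin{pmatrix}0&1\\-1&x\end{pmatrix}$): for integers $n$ with $n\notin E$, i.e.~$|n|\ge 3$, the orbit of the integral point $(n,n,2)$ under $\tau_x$ is infinite, hence Zariski dense in the fiber $\{x=n\}\cap X$, and these fibers sweep out $X$. That argument is very much in the spirit of the rest of the paper, where density is systematically produced from dense mapping-class-group orbits of a single integral point. You instead uniformize $X$ directly by the Weyl covering $\mathbb{G}_m^2\to X$, $(\lambda,\mu)\mapsto(\lambda+\lambda^{-1},\mu+\mu^{-1},\lambda\mu+\lambda^{-1}\mu^{-1})$ (a 2-to-1 map realizing $X$ as the quotient by $(\lambda,\mu)\mapsto(\lambda^{-1},\mu^{-1})$), and push forward the dense lattice generated by a real quadratic unit with integer trace. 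This is arguably cleaner and entirely self-contained: it avoids both the explicit Dehn-twist formula and the need to argue that the fiber $\{x=n\}\cap X$ is an irreducible curve (so that ``infinite orbit'' $\Rightarrow$ ``dense in the fiber''), a point the paper leaves implicit. The one detail you state rather than prove is that the image of your map lands in $X$, but this is a short direct calculation that does check out. Your argument for the Zariski density of $S=\{(\mu_0^m,\mu_0^n)\}$ is also correct. In short: same conclusion, different and slightly more structural method, with a small bonus of not needing $n\notin E$.
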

\begin{proof}
	The automorphism $\tau_x(x,y,z)=(x,z,xz-y)$ of $X$ fixes $x$ and acts on $\begin{pmatrix}y\\z \end{pmatrix}$ as the matrix $\begin{pmatrix}0 & 1 \\-1 & x \\\end{pmatrix}$. Whenever $x \notin E$, we have that $\begin{pmatrix}0 & 1 \\-1 & x \\\end{pmatrix}^n-\begin{pmatrix}1 & 0 \\0 & 1\\\end{pmatrix}$ is invertible, which implies that $(x,0,0)$ is the only point of $\mathbb{A}^3$ with finite $\tau_x$-orbit and first coordinate equal to $x$. It follows that, for all $n \in \mb Z$, the point $(n,n,2)$ has a Zariski dense $\tau_x$-orbit in the curve $x=n$, proving the claim.
\end{proof}
We can finally prove the following:
\begin{theorem}\label{theorem:main-theorem-sl2} Let $\Sigma_{g,n}$ be an orientable surface of genus $g$ with $n$ punctures. Let $k\in \overline{\mathbb{Z}}^n$ be an $n$-tuple of algebraic integers.
	There exists a number field $K$ such that $X_{g,n,k}(\mc O_K)$ is Zariski dense.
\end{theorem}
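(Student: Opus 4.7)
The plan is to reduce the statement to \autoref{theorem:point-with-dense-orbit} combined with the elementary observation that the pure mapping class group $\Gamma_{g,n}$ acts on $X_{g,n,k}$ by automorphisms defined over $\mathbb{Z}$, so it preserves integral points.

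First I would dispose of the low-complexity cases not covered by \autoref{theorem:point-with-dense-orbit}, namely $3g-3+n \le 0$. For $(g,n) \in \{(0,0),(0,1),(0,2)\}$ the fundamental group is trivial or infinite cyclic, and $X_{g,n,k}$ is either empty or zero-dimensional with an obvious integral point (e.g.\ a diagonal representation with entries in $\mathcal{O}_K$, for $K$ containing a square root of one of the $k_i$). For $(g,n)=(0,3)$ the relative character variety is zero-dimensional; a single integral representation realizing the traces $k=(k_1,k_2,k_3)$ exists by the trace normal form (apply \autoref{lemma:existence-integral-matrices-pant} with $A = \bigl(\begin{smallmatrix} k_3 & -1 \\ 1 & 0 \end{smallmatrix}\bigr) \in \mc M_K$, which has $\tr A = k_3$, possibly passing to a quadratic extension). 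The remaining case $(g,n)=(1,0)$ is exactly \autoref{lemma:finite-orbit-torus}.

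Now assume $3g-3+n>0$. By \autoref{theorem:point-with-dense-orbit} there exists $p \in X_{g,n,k}(\overline{\mathbb{Z}})$ whose $\Gamma_{g,n}$-orbit is Zariski dense in $X_{g,n,k,\overline{\mathbb{Q}}}$; tracing through its proof (which invokes \autoref{proposition:existence-good-representation}), one sees that $p$ actually lies in $X_{g,n,k}(\mathcal{O}_K)$ for an explicit number field $K$ (a quadratic extension of any number field containing the entries of $k$ together with $\sqrt{5}$, to satisfy the unit hypothesis).

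The key remaining observation is that the $\Gamma_{g,n}$-action on $X_{g,n,k}$ is induced by outer automorphisms of $\pi_1(\Sigma_{g,n})$, which simply rewrite loops as words in the generators. Consequently, the action on the representation variety $\Hom(\pi_1(\Sigma_{g,n}),\SL_2)$ and on the character variety is given by polynomial maps with $\mathbb{Z}$-coefficients, and so preserves $\mathcal{O}_K$-valued points. Therefore $\Gamma_{g,n}\cdot p \subseteq X_{g,n,k}(\mathcal{O}_K)$, and since this orbit is Zariski dense in $X_{g,n,k,\overline{\mathbb{Q}}}$, the set $X_{g,n,k}(\mathcal{O}_K)$ is Zariski dense, as required.

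The main obstacle has, in effect, already been cleared in the previous sections: the construction of an integral $\mathcal{P}$-good representation (\autoref{proposition:existence-good-representation}) and the orbit-density proposition (\autoref{proposition:zariski-dense-orbit}) do all the work. At this final step only the integrality of the mapping class group action and the bookkeeping of degenerate $(g,n)$ remain, both of which are routine.
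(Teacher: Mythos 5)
Your proof is correct and follows essentially the same route as the paper: reduce to \autoref{theorem:point-with-dense-orbit} when $3g-3+n>0$ and use that the pure mapping class group action is defined over $\mathbb{Z}$ (hence preserves $\mathcal{O}_K$-points), then handle the small $(g,n)$ by hand, with $(1,0)$ via \autoref{lemma:finite-orbit-torus}. The only cosmetic differences are that you spell out the degenerate genus-$0$ cases in more detail than the paper (which simply notes they are points or empty and invokes \autoref{rem:P-good}), and your parenthetical about needing a square root of $k_i$ should really be a root of $\lambda^2-k_i\lambda+1$, i.e.\ involves $\sqrt{k_i^2-4}$; neither affects the argument.
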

\begin{proof}
	When $3g-3+n > 0$ the result follows from \autoref{theorem:point-with-dense-orbit}. If $3g-3+n \le  0$ and $g=0$, then the relative character variety is a point (see e.g.~\autoref{rem:P-good}), so the claim is trivial. If $3g-3+n \le  0$ and $g=1$, then $n=0$: by \autoref{proposition:degenerate-character-varieties} $X_{SL_2}(\Sigma_{1,0})$ is just the (full) character variety of a torus, that is $x^2+y^2+z^2-xyz-4=0$, so we conclude by \autoref{lemma:finite-orbit-torus}.
\end{proof}
\begin{remark}\label{rem:degree-4}
Unwinding the proof, one can check that if $K=\mathbb{Q}(k_1, \cdots, k_n)$ is the number field generated by the coordinates of $k\in \overline{\mathbb{Z}}^n$, then there exists a degree $4$ (in fact biquadratic) extension $L$ of $K$ such that $\mathscr{O}_L$-points are Zariski-dense in $X_{g,n,k, \overline{\mathbb{Q}}}$.
\end{remark}
\begin{remark}\label{rem:extension-necessary}
	Some extension is necessary; indeed, consider the variety $$x^2+y^2+z^2-xyz=3,$$ which is a relative character variety of $\Sigma_{1,1}$, by \autoref{subsec:1104}. Working mod $3$, we see that any $\mathbb{Z}$-point of this variety must have $x=y=z=0\bmod 3$. But then $x^2+y^2+z^2-xyz$ is divisible by $9$, and hence cannot equal $3$. Thus this relative character variety has no $\mathbb{Z}$-points.
\end{remark}

\section{ $\PGL_2$-character varieties}\label{sec:pgl2}
Let $\Sigma=\Sigma_{g,n}$ be an orientable surface of genus $g$ with $n$ punctures. The  $\PGL_2$-representation variety $\Hom(\pi_1(\Sigma),\PGL_2)$ is the affine scheme representing the functor $$A \mapsto \Hom(\pi_1(\Sigma),\PGL_2(A)).$$ The  $\PGL_2$-character variety $X_{\PGL_2}(\Sigma)$ is the (categorical) quotient $\Hom(\pi_1(\Sigma),\PGL_2)/\PGL_2$ under the action of  $\PGL_2$ by conjugation.

The regular function $\frac{\tr^2}{\det}$ descends from $\on{GL}_2$ to $\PGL_2$. Hence for each $a \in \pi_1(\Sigma)$ there is a regular function $f_a \coloneqq \frac{\tr_a^2}{\det_a} \colon \Hom(\pi_1(\Sigma), \PGL_2) \to \mb A^1 $ given by $\rho \mapsto \frac{\tr(\rho(a))^2}{\det(\rho(a))}$, which descend to a regular function $f_a \colon X_{\PGL_2}(\Sigma) \to \mb A^1$.
There is a natural morphism $f_{\partial \Sigma} \colon X_{\PGL_2}(\Sigma) \to \mb A^n$ sending $p \in X_{\PGL_2}(\Sigma)$ to the $n$-tuple of $f_a(p)$'s as $a$ varies along the $n$ boundary components of $\Sigma$. For $k \in \mb A^n$, we denote by $X_{\PGL_2, k}(\Sigma) \coloneqq f_{\partial \Sigma}^{-1}(k)$ the \emph{relative  $\PGL_2$-character variety} of $\Sigma$.

The goal of this section is to prove the following:
\begin{theorem}\label{theorem:pgl2-density}
	Let $\Sigma$ be a surface of type $(g,n)$ and $k \in \mb A^n(\Zbar)$. Then there exists a number field $K$ such that $X_{\PGL_2, k}(\Sigma)(\mc O_K)$ is Zariski dense.
\end{theorem}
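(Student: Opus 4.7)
The plan is to reduce Theorem \ref{theorem:pgl2-density} to Theorem \ref{theorem:main-theorem-sl2} via the central isogeny $\pi\colon \SL_{2,\mb Z} \to \PGL_{2,\mb Z}$. First I would take a number field $L$ containing square roots $k'_i$ of each $k_i$, so that $(k'_i)^2 = k_i$. For every sign vector $\epsilon = (\epsilon_1, \ldots, \epsilon_n) \in \{\pm 1\}^n$, composition with $\pi$ induces a morphism
\[
\phi_\epsilon\colon X_{\SL_2, \epsilon k'}(\Sigma) \longrightarrow X_{\PGL_2, k}(\Sigma),
\]
well-defined on boundary invariants because the function $f_a = \tr_a^2/\det_a$ on $X_{\PGL_2}(\Sigma)$ pulls back to $\tr_a^2$ on $X_{\SL_2}(\Sigma)$. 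Any two $\SL_2$-lifts of a fixed $\PGL_2$-representation differ by a character $\pi_1(\Sigma) \to \mu_2$, and there are only finitely many such characters, so each $\phi_\epsilon$ has finite fibers. In particular $\phi_\epsilon$ is quasi-finite, so its image in $X_{\PGL_2, k}(\Sigma)$ is constructible of the same dimension as the source, and the image of a Zariski-dense subset is Zariski-dense in the image of $\phi_\epsilon$.

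Next I would show that, for $n \geq 1$, the images of the various $\phi_\epsilon$ cover $X_{\PGL_2, k}(\Sigma)$. The assumption $n \geq 1$ plays a crucial role here: the group $\pi_1(\Sigma_{g,n})$ is then free, so every $\PGL_2$-representation of $\pi_1(\Sigma_{g,n})$ lifts to $\SL_2$ by lifting a set of free generators, and the boundary traces of any lift are automatically of the form $\epsilon_i k'_i$ for some sign vector $\epsilon$. By Theorem \ref{theorem:main-theorem-sl2}, there is for each $\epsilon$ a number field $L_\epsilon$ such that $X_{\SL_2, \epsilon k'}(\Sigma)(\mc O_{L_\epsilon})$ is Zariski-dense. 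Since $\pi$ is defined over $\mb Z$, each $\phi_\epsilon$ sends integral points to integral points. Taking $K$ to be the compositum of the finitely many $L_\epsilon$'s then yields a Zariski-dense set of $\mc O_K$-points in $\bigcup_\epsilon \phi_\epsilon(X_{\SL_2, \epsilon k'}(\Sigma)) = X_{\PGL_2, k}(\Sigma)$.

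The hard part will be the closed-surface case $n = 0$, where for $g \geq 1$ the variety $X_{\PGL_2}(\Sigma_g)$ has a component distinguished by nontrivial second Stiefel-Whitney class: its points are precisely the $\PGL_2$-representations of $\pi_1(\Sigma_g)$ that do not lift to $\SL_2$, so they lie outside every $\phi_\epsilon$. I would access this component by identifying it, up to a finite map, with the sublocus of $X_{\SL_2, -2}(\Sigma_{g,1})$ on which the monodromy along the puncture is exactly $-I$ (not merely of trace $-2$): indeed, any lift to $\SL_2$ of a $\PGL_2$-representation of $\pi_1(\Sigma_g)$ with nontrivial $w_2$ is an $\SL_2$-representation of the free group $\pi_1(\Sigma_{g,1})$ sending the boundary relator $\prod_i[a_i,b_i]$ to $-I$. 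The plan is then to adapt the gluing constructions of Section \ref{section:construction_integral_representation} to produce an integral $\mc P$-good representation of $\pi_1(\Sigma_{g,1})$ with puncture monodromy exactly $-I$, and to invoke \autoref{proposition:zariski-dense-orbit} as before. Low-dimensional sporadic cases (for instance $\Sigma_{1,0}$) should be handled by direct computation analogous to \autoref{lemma:finite-orbit-torus}.
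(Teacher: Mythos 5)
Your treatment of the $n\geq 1$ case matches the paper's argument in substance; the paper is slightly more economical, observing that one need only flip the last sign (so $X_{\SL_2,\sqrt{k}_+}\sqcup X_{\SL_2,\sqrt{k}_-}$ already dominates $X_{\PGL_2,k}$, rather than all $2^n$ sign vectors), but your version is correct and leads to the same conclusion. You also correctly isolate the genuine difficulty in the $n=0$ case, namely the component of $X_{\PGL_2}(\Sigma_{g,0})$ with nontrivial $w_2$, and you correctly identify the right object to study: what the paper calls $X_{g,-I}$, the quotient of $\{\rho\in\Hom(\pi_1(\Sigma_{g,1}),\SL_2)\,:\,\rho(\gamma)=-I\}$.

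The gap is in the final sentence of your plan, where you say you will ``invoke \autoref{proposition:zariski-dense-orbit} as before.'' That proposition cannot be applied to any point of $X_{g,-I}$: for any pants decomposition $\mathcal{P}$ of $\Sigma_{g,1}$, the pair of pants containing the puncture has boundary monodromies $-I$, $M$, $-M^{-1}$, so the restricted representation has abelian image and is never irreducible. Hence no fiber of $\tr_{\mathcal{P}}$ meeting $X_{g,-I}$ is ``perfect'' in the sense of \autoref{definition:perfect-fiber}, and no point of $X_{g,-I}$ is $\mathcal{P}$-good in the sense of \autoref{defn:p-good}. (Relatedly, even if one could apply it, \autoref{proposition:zariski-dense-orbit} would assert density of the orbit in $X_{g,1,-2}$, whereas the orbit actually lies in the proper subvariety $X_{g,-I}$, because the $-I$ locus is mapping-class-group-invariant.) The paper therefore rebuilds the machinery for this case: a modified trace map $\tr_{\mathcal{P},-I}$ dropping the coordinate along one boundary curve $\delta$ of the degenerate pair of pants (since $\rho(\delta)=-\rho(\gamma)^{-1}$ is determined), a relaxed perfect-fiber condition exempting that pair of pants, a Whang-type parametrization from $\mathbb{G}_m^{3g-3}$ rather than $\mathbb{G}_m^{3g-2}$, and a new matrix class $\mathcal{L}_K$ (commutators with $\mathrm{diag}(i,-i)$) replacing $\mathcal{N}_K$. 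The resulting density statement (\autoref{proposition:zariski-dense-orbit-minus-identity}) is only in a single fiber of $\tr_{\gamma}$, using Dehn twists disjoint from $\gamma,\delta$, and an extra argument is needed---constructing infinitely many $\mathcal{P}$-good integral points with distinct $\tr_\gamma$---to conclude density in all of $X_{g,-I}$. None of this is ``as before,'' so this part of your proposal would not go through without substantial new work.
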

We will do so by reducing the statement to the potential density of integral points on certain $\SL_2$-character varieties.

\subsection{Reduction to potential density on  $\SL_2$-character varieties}

Given $k=(k_1, \dots, k_n) \in \mb A^n(\Qbar)$, we let $\sqrt{k}_{+}=(\sqrt{k_1}, \dots, \sqrt{k_{n-1}}, \sqrt{k_n})$ and $\sqrt{k}_{-}=(\sqrt{k_1}, \dots, \sqrt{k_{n-1}}, -\sqrt{k_n})$ for a fixed choice of square roots. There are natural morphisms  $X_{\SL_2,\sqrt{k}_{+}}(\Sigma) \to X_{\PGL_2, k}(\Sigma)$ and $X_{\SL_2,\sqrt{k}_{-}}(\Sigma) \to X_{\PGL_2, k}(\Sigma)$. We have the following:
\begin{proposition}\label{proposition:pgl2-lift}
	Let $\Sigma$ be a surface of type $(g,n)$ with $n>0$ and let $k\in \mb A^n(\Qbar)$. Then each point of $X_{\PGL_2, k}(\Sigma)(\Qbar)$ lifts to either $X_{\SL_2,\sqrt{k}_{+}}(\Sigma)(\overline{\mb{Q}})$ or $X_{\SL_2,\sqrt{k}_{-}}(\Sigma)(\overline{\mb{Q}})$.
\end{proposition}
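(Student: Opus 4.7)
The plan is to exploit that $\Sigma_{g,n}$ has nonempty boundary when $n \geq 1$, so $\pi_1(\Sigma_{g,n})$ is free of rank $2g + n - 1$. This kills the $H^2$-obstruction (with coefficients in $\mathbb{Z}/2$) to lifting $\PGL_2$-representations to $\SL_2$, and all that will remain is to choose the signs of the boundary traces correctly. I will present $\pi_1(\Sigma_{g,n})$ on free generators $a_1, b_1, \ldots, a_g, b_g, \gamma_1, \ldots, \gamma_{n-1}$, where $\gamma_i$ is a loop around the $i$-th puncture for $i < n$, and $\gamma_n$ is expressed as an explicit word in the generators via the standard surface relation $[a_1,b_1]\cdots[a_g,b_g]\gamma_1\cdots\gamma_n = 1$.

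Given a polystable representative $\rho \colon \pi_1(\Sigma) \to \PGL_2(\Qbar)$ of a class in $X_{\PGL_2, k}(\Sigma)(\Qbar)$, I will lift each free generator arbitrarily under the quotient $q \colon \SL_2 \to \PGL_2$, obtaining a homomorphism $\tilde\rho \colon \pi_1(\Sigma) \to \SL_2(\Qbar)$. Polystability of $\tilde\rho$ will follow from that of $\rho$, since Borels and maximal tori of $\PGL_2$ pull back under $q$ to Borels and maximal tori of $\SL_2$; so $[\tilde\rho]$ defines a $\Qbar$-point of $X_{\SL_2}(\Sigma)$ mapping to $[\rho]$. For each boundary loop $\gamma_i$, since $\det(\tilde\rho(\gamma_i)) = 1$ and $f_{\gamma_i}$ descends from $\on{GL}_2$ to a well-defined regular function on $\PGL_2$, I will obtain $\tr(\tilde\rho(\gamma_i))^2 = f_{\gamma_i}(\rho) = k_i$, so $\tr(\tilde\rho(\gamma_i)) = \pm\sqrt{k_i}$.

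The final step is the sign adjustment. For each $i \in \{1, \ldots, n-1\}$, since $\gamma_i$ is a free generator, I can independently replace $\tilde\rho(\gamma_i)$ by $-\tilde\rho(\gamma_i)$, which flips the sign of $\tr(\tilde\rho(\gamma_i))$ without disturbing the value of $\tilde\rho$ on any other free generator. I will make these $n-1$ choices so that $\tr(\tilde\rho(\gamma_i)) = \sqrt{k_i}$ for $i = 1, \ldots, n-1$. The trace $\tr(\tilde\rho(\gamma_n))$ is then forced by the word expressing $\gamma_n$; it must equal $\pm\sqrt{k_n}$, and whichever sign occurs places $[\tilde\rho]$ in $X_{\SL_2, \sqrt{k}_+}(\Sigma)(\Qbar)$ or $X_{\SL_2, \sqrt{k}_-}(\Sigma)(\Qbar)$, as desired. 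I do not anticipate a serious obstacle: the argument is essentially elementary, and the only point worth checking carefully is that the lift $\tilde\rho$ is polystable (so that it genuinely defines a $\Qbar$-point of the $\SL_2$-character variety), which is handled uniformly by the compatibility of Borels and maximal tori under $q$.
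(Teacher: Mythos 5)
Your proposal is correct and is essentially the paper's argument: both exploit that $\pi_1(\Sigma_{g,n})$ is free for $n>0$ to lift generator by generator, and both normalize the signs of the lifts on $\gamma_1,\dots,\gamma_{n-1}$ so that the forced sign on $\gamma_n$ determines whether one lands in $X_{\SL_2,\sqrt{k}_+}$ or $X_{\SL_2,\sqrt{k}_-}$. The one cosmetic difference is that the paper lifts all $n$ boundary generators with trace $\sqrt{k_i}$ and then flips the lift of $\gamma_n$ if the relator evaluates to $-I$, whereas you work from a free basis and let $\gamma_n$'s trace be determined; also your polystability check, while harmless, is not needed, since the map on character varieties is induced from the equivariant map on representation varieties and so $\tilde\rho\mapsto\rho$ automatically gives $[\tilde\rho]\mapsto[\rho]$.
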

\begin{proof}
	Consider a standard presentation
	\begin{equation*}
		\pi_1(\Sigma)=\langle \alpha_1, \beta_1, \dots, \alpha_g, \beta_g, \gamma_1, \dots, \gamma_n \, \vert \, [\alpha_1,\beta_1] \cdots [\alpha_g,\beta_g] \gamma_1 \cdots \gamma_n=1 \rangle
	\end{equation*}
	of the fundamental group of $\Sigma$.
	Let $p \in  X_{\PGL_2, k}(\Sigma)(\Qbar)$ and let $\rho \in \Hom(\pi_1(\Sigma), \PGL_2(\Qbar))$ be a lift of $p$ to the representation variety. Choose $\widetilde{\rho}(\alpha_i), \widetilde{\rho}(\beta_i), \widetilde{\rho}(\gamma_i) \in \SL_2(\Qbar)$ lifting $\rho(\alpha_i), \rho(\beta_i), \rho(\gamma_i)$  such that $\tr \widetilde{\rho}(\gamma_i)=\sqrt{k_i}$ for $i=1, \dots, n$. In particular, we have that
	\begin{equation*}
		[\widetilde{\rho}(\alpha_1),\widetilde{\rho}(\beta_1)] \cdots [\widetilde{\rho}(\alpha_g),\widetilde{\rho}(\beta_g)] \widetilde{\rho}(\gamma_1) \cdots \widetilde{\rho}(\gamma_n)= \pm I,
	\end{equation*}
	where $I$ denotes the identity matrix. If the right hand side is $+I$, then $[\widetilde{\rho}]$ provides a lift of $p$ to $X_{\SL_2,\sqrt{k}_{+}}(\Sigma)(\overline{\mb{Q}})$. Otherwise, consider the  $\SL_2$-representation $\widetilde{\rho}_{-}$ which agrees with $\widetilde{\rho}$ on all generators of $\pi_1(\Sigma)$ except $\gamma_n$, where instead $\widetilde{\rho}_{-}(\gamma_n)\coloneqq -\widetilde{\rho}(\gamma_n)$. Then $[\widetilde{\rho}_{-}]$ provides a lift of $p$ to $X_{\SL_2,\sqrt{k}_{-}}(\Sigma)(\overline{\mb{Q}})$.
\end{proof}
\begin{corollary}\label{corollary:pgl2-lift}
	The map $X_{\SL_2,\sqrt{k}_{+}}(\Sigma) \sqcup X_{\SL_2,\sqrt{k}_{-}}(\Sigma) \to X_{\PGL_2, k}(\Sigma)$ is dominant.
\end{corollary}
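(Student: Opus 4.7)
The plan is to deduce this essentially immediately from \autoref{proposition:pgl2-lift}, using only that the relative character varieties in question are schemes of finite type over $\Qbar$ (or over a suitable number field), so that their $\Qbar$-points are Zariski-dense.

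First I would fix a field of definition: all three schemes $X_{\SL_2, \sqrt{k}_+}(\Sigma)$, $X_{\SL_2, \sqrt{k}_-}(\Sigma)$, and $X_{\PGL_2, k}(\Sigma)$ are defined over $\Qbar$ (in fact over the number field generated by the $k_i$ and the chosen square roots $\sqrt{k_i}$), and the natural maps $X_{\SL_2, \sqrt{k}_\pm}(\Sigma) \to X_{\PGL_2, k}(\Sigma)$ are morphisms of finite-type $\Qbar$-schemes. To show dominance, it suffices to show that the image is Zariski dense in the target.

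Next I would invoke \autoref{proposition:pgl2-lift}: every point of $X_{\PGL_2, k}(\Sigma)(\Qbar)$ lies in the image (set-theoretically) of the disjoint union $X_{\SL_2, \sqrt{k}_+}(\Sigma)(\Qbar) \sqcup X_{\SL_2, \sqrt{k}_-}(\Sigma)(\Qbar)$. Thus the image of the morphism $X_{\SL_2,\sqrt{k}_{+}}(\Sigma) \sqcup X_{\SL_2,\sqrt{k}_{-}}(\Sigma) \to X_{\PGL_2, k}(\Sigma)$ contains every $\Qbar$-point of the target.

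Finally, since $X_{\PGL_2, k}(\Sigma)$ is of finite type over $\Qbar$, it is a Jacobson scheme: its closed points are Zariski dense, and (as $\Qbar$ is algebraically closed) each closed point is a $\Qbar$-point. Consequently the image of our map contains a Zariski-dense subset of $X_{\PGL_2, k}(\Sigma)$, so the map is dominant. There is no real obstacle here beyond reading off the pointwise lifting statement of \autoref{proposition:pgl2-lift}; the work has already been done in establishing that proposition. (Note also that if $X_{\PGL_2, k}(\Sigma)$ happens to be empty the conclusion is vacuous, so nothing needs to be said in that case.)
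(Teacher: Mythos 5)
Your argument is correct and is exactly the (implicit) reasoning behind the paper's corollary: \autoref{proposition:pgl2-lift} gives surjectivity on $\Qbar$-points, and since $X_{\PGL_2,k}(\Sigma)$ is of finite type over $\Qbar$ its $\Qbar$-points (equivalently, closed points) are Zariski dense, so the constructible image is dense and the morphism is dominant. The paper states the corollary without proof precisely because this deduction is routine once \autoref{proposition:pgl2-lift} is in hand.
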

In order to obtain a statement analogous to the above in the non-punctured case, we need to introduce the moduli space of $\SL_2$-local systems on a once-punctured surface with monodromy $-I$ at the puncture. Let $\Sigma$ be a surface of type $(g,1)$ and consider a standard presentation of its fundamental group
\begin{equation*}
	\pi_1(\Sigma)=\langle \alpha_1, \beta_1, \dots, \alpha_g, \beta_g, \gamma \, \vert \, [\alpha_1,\beta_1] \cdots [\alpha_g,\beta_g] \gamma =1 \rangle
\end{equation*}
where $\gamma$ is a loop around the puncture. Let $\Hom(\pi_1(\Sigma), \SL_2)_{-I}$ be the scheme representing the functor
\begin{equation*}
	A \mapsto \{\rho \in \Hom(\pi_1(\Sigma), \SL_2(A)) \, \vert \, \rho(\gamma)=-I \}.
\end{equation*} 
and let $X_{g,-I} \coloneqq \Hom(\pi_1(\Sigma), \SL_2)_{-I} /\SL_2$ be the categorical quotient under the action of  $\SL_2$ by  conjugation.
We have the following analogue of \autoref{proposition:pgl2-lift}:
\begin{proposition}\label{proposition:pgl2-lift-closed-surface}
	Let $\Sigma$ be a surface of type $(g,0)$ with $g\ge 1$. Then each point of $X_{\PGL_2}(\Sigma)(\Qbar)$ lifts to either $X_{\SL_2}(\Sigma)(\overline{\mb{Q}})$ or $X_{g,-I}(\overline{\mb{Q}})$.
\end{proposition}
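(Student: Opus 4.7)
The plan is to mimic the proof of \autoref{proposition:pgl2-lift} almost verbatim, using that the closed surface case differs from the punctured case only in that we cannot adjust the parity by flipping the sign on a peripheral generator: the parity is a rigid invariant of the lift, and the two possibilities correspond precisely to the two target moduli spaces.

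Concretely, I would fix the standard presentation
\begin{equation*}
\pi_1(\Sigma)=\langle \alpha_1, \beta_1, \dots, \alpha_g, \beta_g \, \vert \, [\alpha_1,\beta_1] \cdots [\alpha_g,\beta_g]=1 \rangle,
\end{equation*}
let $p\in X_{\PGL_2}(\Sigma)(\Qbar)$, pick a representative $\rho\in\Hom(\pi_1(\Sigma),\PGL_2(\Qbar))$, and for each $i$ choose arbitrary lifts $\widetilde{\rho}(\alpha_i),\widetilde{\rho}(\beta_i)\in\SL_2(\Qbar)$ of $\rho(\alpha_i),\rho(\beta_i)$. The word $w\coloneqq[\widetilde{\rho}(\alpha_1),\widetilde{\rho}(\beta_1)]\cdots[\widetilde{\rho}(\alpha_g),\widetilde{\rho}(\beta_g)]$ is forced into the kernel $\{\pm I\}$ of $\SL_2\to\PGL_2$, since its image in $\PGL_2$ satisfies the surface relation.

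In the case $w=+I$, the assignment $\alpha_i,\beta_i\mapsto\widetilde{\rho}(\alpha_i),\widetilde{\rho}(\beta_i)$ respects the single relation of $\pi_1(\Sigma)$ and therefore defines a genuine representation $\widetilde{\rho}\in\Hom(\pi_1(\Sigma),\SL_2(\Qbar))$ lifting $\rho$, whose class lies in $X_{\SL_2}(\Sigma)(\Qbar)$. In the case $w=-I$, I view $\pi_1(\Sigma_{g,1})$ as the free group on $\alpha_1,\beta_1,\dots,\alpha_g,\beta_g$ together with a boundary loop $\gamma$ satisfying $[\alpha_1,\beta_1]\cdots[\alpha_g,\beta_g]\gamma=1$. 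Then the tuple $(\widetilde{\rho}(\alpha_i),\widetilde{\rho}(\beta_i))$ extends uniquely to a representation $\widetilde{\rho}\colon\pi_1(\Sigma_{g,1})\to\SL_2(\Qbar)$ by declaring $\widetilde{\rho}(\gamma)\coloneqq w^{-1}=-I$, giving a class in $X_{g,-I}(\Qbar)$. Finally, I observe that in either case the image of $\widetilde{\rho}$ under the natural projection to $X_{\PGL_2}(\Sigma)$ (which is well-defined because $\widetilde{\rho}(\gamma)\in\{\pm I\}$ is trivial in $\PGL_2$, so the representation descends to $\pi_1(\Sigma)$) recovers $p$.

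There is no substantive obstacle: the only content beyond the punctured version of \autoref{proposition:pgl2-lift} is the bookkeeping reinterpretation of ``$w=-I$" as a representation on the once-punctured surface with prescribed peripheral monodromy $-I$, which is exactly the moduli problem represented by $X_{g,-I}$. The lack of a trick to toggle between $+I$ and $-I$ (as is available in the punctured case by rescaling a boundary monodromy) is precisely the reason the target of the lift must be allowed to be either $X_{\SL_2}(\Sigma)$ or $X_{g,-I}$, rather than a single space.
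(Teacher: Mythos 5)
Your proposal is correct and follows exactly the route the paper intends: the paper's own proof consists of the single line ``Analogous to \autoref{proposition:pgl2-lift},'' and you have accurately spelled out that analogy, including the reinterpretation of the case $w=-I$ as a point of $X_{g,-I}$ and the observation that the sign cannot be toggled in the absence of a peripheral generator. Nothing to add or correct.
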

\begin{proof}
	Analogous to \autoref{proposition:pgl2-lift}.
\end{proof}
\begin{corollary}\label{corollary:pgl2-lift-closed-surface}
	The map $X_{\SL_2}(\Sigma) \sqcup X_{g,-I} \to X_{\PGL_2}(\Sigma)$ is dominant.
\end{corollary}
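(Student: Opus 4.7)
My plan is to deduce the corollary directly from \autoref{proposition:pgl2-lift-closed-surface}, which provides a lift on the level of $\overline{\mathbb{Q}}$-points. In more detail, to establish dominance of the morphism
\[
\phi \colon X_{\SL_2}(\Sigma) \sqcup X_{g,-I} \longrightarrow X_{\PGL_2}(\Sigma),
\]
it suffices to show that its image is Zariski-dense in the target. Since all the schemes in sight are of finite type (the representation varieties embed as closed subschemes of a product of copies of $\SL_2$ or $\PGL_2$ cut out by finitely many relations, and GIT quotients of finite-type schemes are of finite type), this is the natural notion to check.

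The key observation is that \autoref{proposition:pgl2-lift-closed-surface} guarantees that every $\overline{\mathbb{Q}}$-point of $X_{\PGL_2}(\Sigma)$ lies in the set-theoretic image of $\phi$. Since $X_{\PGL_2}(\Sigma)$ is a finite-type scheme over $\mathbb{Q}$ (hence Jacobson), its closed points, and a fortiori its $\overline{\mathbb{Q}}$-points, are Zariski-dense. Combined with Chevalley's theorem, which tells us that the image of $\phi$ is a constructible subset of $X_{\PGL_2}(\Sigma)$, we conclude that this image contains a Zariski-dense constructible subset, and therefore its Zariski closure is the whole of $X_{\PGL_2}(\Sigma)$.

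I do not anticipate any substantive obstacle here: the essential content --- the lifting of conjugacy classes of $\PGL_2$-representations to $\SL_2$-representations with monodromy $\pm I$ at the single puncture after unpuncturing --- has already been established in \autoref{proposition:pgl2-lift-closed-surface}. The corollary is simply the translation of pointwise surjectivity into scheme-theoretic dominance via constructibility of images and density of $\overline{\mathbb{Q}}$-points. Consequently, the proof will be a one- or two-line appeal to the preceding proposition.
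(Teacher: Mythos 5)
Your proof is correct and is the expected (and only reasonable) deduction; the paper states the corollary with no proof, treating it as immediate from \autoref{proposition:pgl2-lift-closed-surface} in exactly the way you spell out. One small remark: once you know the set-theoretic image contains all $\overline{\mathbb{Q}}$-points and these are Zariski-dense (Jacobson, characteristic $0$), dominance follows directly; the appeal to Chevalley's theorem on constructibility of the image is harmless but not actually used in the argument.
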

The goal of the incoming sections is to prove the following:
\begin{theorem}\label{theorem:pgl2-density-minus-id}
	There exists a number field $K$ for which $X_{g,-I}(\mc O_K)$ is Zariski dense.
\end{theorem}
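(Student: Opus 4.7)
Plan. For the base case $g=1$, \autoref{proposition:degenerate-character-varieties} identifies $X_{1,-I}$ with the single point $(0,0,0)$ on the Markoff surface $x^2+y^2+z^2-xyz=0$, which is already $\mathbb Z$-integral; density is immediate. So assume $g\ge 2$ from now on.

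For $g\ge 2$ the strategy is to reproduce the program of \S\ref{section:construction_integral_representation}--\S\ref{sec:potential-density-sl2}: construct an integral $\mathcal P$-good representation $\rho:\pi_1(\Sigma_{g,1})\to\SL_2(\mathcal O_L)$ with $\rho(\gamma)=-I$, and then argue that $\Gamma_{g,1}\cdot[\rho]$ is Zariski dense in $X_{g,-I,\overline{\mathbb Q}}$ by running the proof of \autoref{proposition:zariski-dense-orbit}. The construction proceeds by gluing along a separating simple closed curve $\delta$ that decomposes $\Sigma_{g,1}=\Sigma_1\cup_\delta\Sigma_2$ with $\Sigma_1$ of type $(1,2)$ containing the original puncture and $\Sigma_2$ of type $(g-1,1)$. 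Choose a number field $K$ admitting a unit $v$ as in \autoref{remark:special-unit}, pick a generic $M\in\mathcal N_K$ with $\operatorname{tr} M\notin E$, and apply \autoref{proposition:existence-good-representation} on $\Sigma_2$ to obtain an integral $\mathcal P_2$-good representation $\rho_2$ with peripheral monodromy $M^{-1}$ along $\delta$ (over some quadratic extension $L/K$).

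On $\Sigma_1$ we directly solve $[\rho_1(\alpha),\rho_1(\beta)]=-M^{-1}$ (the condition imposed by $\rho_1(\gamma)=-I$, $\rho_1(\delta)=M$ and the defining relation of $\pi_1(\Sigma_{1,2})$) integrally. Taking $\rho_1(\alpha)=\operatorname{diag}(i,-i)$ after adjoining $i$ to $L$, the equation becomes a system for $\rho_1(\beta)\in\SL_2(\mathcal O_L)$ whose solvability follows by a direct matrix computation analogous to \autoref{lemma:existence-representation-two-holed-torus}, using that $M\in\mathcal N_K\subset\mathcal M_K$ has its bottom-left entry a unit. Gluing $\rho_1$ and $\rho_2$ produces $\rho:\pi_1(\Sigma_{g,1})\to\SL_2(\mathcal O_L)$ with $\rho(\gamma)=-I$, and simultaneously a pants decomposition $\mathcal P$ of $\Sigma_{g,1}$ containing $\delta$ for which $\rho$ is $\mathcal P$-good in the appropriate sense.

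Finally, we adapt \autoref{proposition:zariski-dense-orbit} to the subvariety $X_{g,-I}\subset X_{g,1,(-2)}$. Because the peripheral curve $\gamma$ is never an element of $\mathcal P$, the inductive trace-spreading argument (combining \autoref{proposition:whang-perfect-fibers}, \autoref{lemma:zariski-dense-fiber}, and \autoref{lemma:dominant}) applies verbatim to the interior pants curves of $\mathcal P$, whose traces remain free to vary through generic values; one concludes that $\operatorname{tr}_{\mathcal P}(\Gamma_{g,1}\cdot[\rho])$ is Zariski dense in $\mathbb A^{3g-2}$, and then fiber-by-fiber density in $X_{g,-I,\overline{\mathbb Q}}$ follows.

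The main obstacle is precisely that $-2\in E$, so the density proposition \autoref{proposition:zariski-dense-orbit} does not apply off-the-shelf at the peripheral parameter. The required novelty is a perfect-fiber analysis for the stratum $\rho(\gamma)=-I$: one must check that fibers $X^{\mathcal P}_{(-2),t}\cap X_{g,-I}$ for generic $t$ still admit a toric parameterization of the type in \autoref{proposition:whang-perfect-fibers}, reflecting the fact that on the genus-one handle where the commutator is pinned to $-I$ the representation is essentially rigid (cf.\ \autoref{proposition:degenerate-character-varieties}), so that all nontriviality of the dynamics is carried by the interior pants curves of $\Sigma_2$ together with the $\Sigma_1$-$\Sigma_2$ gluing parameter.
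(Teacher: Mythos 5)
There is a genuine gap, and it lies exactly where you flag uncertainty at the end. Your argument asserts that $\tr_{\mathcal P}(\Gamma_{g,1}\cdot[\rho])$ is Zariski dense in $\mathbb A^{3g-2}$, but this is impossible: if $\gamma$ and $\delta$ are the two curves of $\mathcal P$ bounding the pair of pants containing the puncture, then $\rho(\gamma)\rho(\delta)\rho(\text{puncture})$ is conjugate to the identity, so $\rho(\delta)=-\rho(\gamma)^{-1}$ and hence $\tr_\delta=-\tr_\gamma$ \emph{identically} on $X_{g,-I}$. The image of $\tr_{\mathcal P}$ therefore lives in a hyperplane, and indeed $\dim X_{g,-I}=6g-6$, two less than $\dim X_{g,1,-2}=6g-4$. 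More to the point, the trace-spreading step fails at $\gamma$: the subsurface $\Sigma_\gamma$ glued along $\gamma$ is a four-holed sphere containing the puncture, and the relative $\SL_2$-character variety of $\Sigma_{0,4}$ with one boundary monodromy pinned to the central element $-I$ (not just trace $-2$) and the other three boundary traces fixed is $0$-dimensional. Dehn twisting along curves in $\Sigma_\gamma$ therefore moves $[\rho|_{\Sigma_\gamma}]$ through a finite set, so \autoref{lemma:dominant} cannot be invoked to produce infinitely many values of $\tr_\gamma$. The obstruction is not that $-2\in E$ (indeed $X_{g,1,-2}$ itself poses no such problem); it is the exact, central, peripheral monodromy that rigidifies the subsurface.

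The paper resolves this in two ways you do not. First, it does not try to move $\tr_\gamma$ at all: it introduces $\tr_{\mathcal P,-I}\colon X_{g,-I}\to\mathbb A^{3g-3}$ (dropping $\delta$), the subgroup $\Gamma_{-I}$ of the mapping class group acting trivially on both $\gamma$ and $\delta$, and \autoref{proposition:zariski-dense-orbit-minus-identity}, which yields density only in the fiber $\tr_\gamma^{-1}(\tr_\gamma(p))$. Full density of integral points in $X_{g,-I}$ is then obtained by varying the gluing matrix over an infinite family $\{M_n\}\subset\mathcal L_K$ with infinitely many distinct traces, producing dense integral points fiber by fiber. Second, the class $\mathcal N_K$ is replaced by a new class $\mathcal L_K$ (over extensions of $\mathbb Q(i)$): integral matrices of the form $\bigl[\begin{smallmatrix}ad+bc&2ab\\2cd&ad+bc\end{smallmatrix}\bigr]$, i.e.\ commutators $[A,\mathrm{diag}(i,-i)]$. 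Unlike $\mathcal N_K$, this class is closed under \emph{both} inversion and negation (\autoref{rem:minus-inverse}), which is forced by the relation $\rho(\delta)=-\rho(\gamma)^{-1}$ in the $-I$-pants. Your proposed direct solve of $[\rho_1(\alpha),\rho_1(\beta)]=-M^{-1}$ with $\rho_1(\alpha)=\mathrm{diag}(i,-i)$ cannot work for $M\in\mathcal N_K$: a commutator with $\mathrm{diag}(i,-i)$ has equal diagonal entries, whereas for $M\in\mathcal N_K$ the diagonal entries of $-M^{-1}$ are equal only when $a=1$, which forces $\tr M=2\in E$ and is excluded.
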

As a corollary of \autoref{theorem:pgl2-density-minus-id} and \autoref{theorem:main-theorem-sl2}, we can now obtain \autoref{theorem:pgl2-density}.
\begin{proof}[Proof of \autoref{theorem:pgl2-density}, assuming \autoref{theorem:pgl2-density-minus-id}]
	The statement follows from \autoref{corollary:pgl2-lift} and \autoref{theorem:main-theorem-sl2} if $n>0$, and from \autoref{corollary:pgl2-lift-closed-surface} and \autoref{theorem:pgl2-density-minus-id} if $n=0$.
\end{proof}

\subsection{Dynamics on $X_{g, -I} $}
Let $\Sigma$ be a surface of type $(g,1)$ with $g \ge 2$ and let $\mc P$ be a pants decomposition of $\Sigma$. The pair of pants containing the puncture of $\Sigma$ will have two more punctures corresponding to paths of $\mc P$, which we denote by $\gamma$ and $\delta$. Let us fix an enumeration $\mc P = \{a_1, \dots, a_{3g-3}, a_{3g-3+1} \}$, where $a_{3g-3}=\gamma$ and $ a_{3g-3+1}=\delta$. We consider the modified trace map
\begin{equation*}
	\tr_{\mc P, -I} \colon X_{g, -I} \to \mb A^{3g-3}
\end{equation*}
that sends a point of $X_{g, -I}$ to the traces along all paths of $\mc P$ \emph{except} $\delta$.

For $t \in \mb A^{3g-3}$, we denote $X^\mathcal{P}_{t,-I} \coloneqq \tr_{\mc P,-I}^{-1}(t)$.
Let
\begin{equation*}
	(-) \vert_{\Sigma \setminus \mc P} \colon X_{g,-I} \to X(\Sigma_1)\times \cdots \times X(\Sigma_{2g-2})
\end{equation*}
be the morphism induced by the immersion $\Sigma \setminus \mc P \to \Sigma$, where the product on the right hand side is taken over all the pair of pants associated to $\mc P$, with the exception of the pair of pants containing the puncture of $\Sigma$. Clearly $(-) \vert_{\Sigma \setminus \mc P}$ is constant along each fiber $X^\mathcal{P}_{t,-I}$.

\begin{definition}
	We say that $X^\mathcal{P}_{t,-I}$ is \emph{perfect} if
	\begin{itemize}
		\item for all $a_i \in \mc P$, we have $\tr_{a_i}(X^\mathcal{P}_{t,-I}) \neq \pm 2$ and
		\item $g=1$ or, for each $[\rho]$ in $X^\mathcal{P}_{t,-I}(\mb C)$, its restriction to each component of ${\Sigma \setminus \mc P}$ is irreducible, with the exception of the component containing the puncture of $\Sigma$.
	\end{itemize}
\end{definition}

Let $X^\mathcal{P}_{t,-I}$ be a perfect fiber. Notice that the action of $\Gamma_{\mc P}$ preserves $X^\mathcal{P}_{t,-I}$. Fix $(\lambda_1, \dots, \lambda_{3g-3}) \in (\Qbar^{\times})^{3g-3}$ such that $\lambda_i+\lambda_i^{-1}=t_i$. We denote by $T_{z_i} \colon \mb{G}_m^{3g-3} \to \mb{G}_m^{3g-3}$ the multiplication of the $i$-th coordinate by $\lambda_i$.
\begin{proposition}
	If $X^\mathcal{P}_{t,-I}$ be a perfect fiber, then there is a morphism
	\begin{equation*}
		F \colon \mb{G}_m^{3g-3} \to X^\mathcal{P}_{t,-I}
	\end{equation*}
	defined over $\Qbar$ satisfying the following:
	\begin{enumerate}
		\item at the level of $\Qbar$-points, $F$ is surjective with finite fibers,
		\item the action of $T_{z_i}$ on $\mb{G}_m^{3g-3}$ lifts the action of the Dehn twist $\tau_{a_i}$ on $X^\mathcal{P}_{t,-I}$.
	\end{enumerate}
\end{proposition}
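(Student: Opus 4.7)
The plan is to follow the strategy of Whang's proof of \autoref{proposition:whang-perfect-fibers}, with modifications to handle the distinguished pair of pants $P_0\subset\Sigma\setminus\mc P$ containing the puncture. The key structural novelty, compared to Whang, is that the constraint $\rho(\text{puncture})=-I$ forces $\rho(\gamma)\rho(\delta)=-I$, so the restriction $\rho|_{\pi_1(P_0)}$ is always reducible (both boundary monodromies share the same pair of eigenvectors), and the simultaneous centralizer of the boundary pair $(M_\gamma,M_\delta)$ in $\SL_2$ is a full one-dimensional torus rather than the finite group $\{\pm I\}$ that appears for the other (irreducible) pants. This is precisely why only $3g-3$ rather than $3g-2$ twist parameters arise, matching the dimension count $\dim X^{\mc P}_{t,-I} = (6g-6)-(3g-3) = 3g-3$.

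Concretely, I would fix reference diagonal monodromies $M_{a_i}:=\on{diag}(\lambda_i,\lambda_i^{-1})$ for $i=1,\ldots,3g-3$, and set $M_\delta:=-M_\gamma^{-1}=\on{diag}(-\lambda_{3g-3}^{-1},-\lambda_{3g-3})$, which is diagonal and satisfies $M_\gamma M_\delta=-I$. For each pair of pants $P_j$ with $j\neq 0$, the perfect-fiber assumption implies (via \cite[Lemma 3.3]{whang:ant}) that there is an irreducible representation $\rho_j^{\text{std}}\colon\pi_1(P_j)\to\SL_2(\Qbar)$ with boundary monodromies equal to the prescribed references, uniquely determined up to the finite stabilizer $\{\pm I\}$ under simultaneous conjugation. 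On $P_0$, the unique representation with $\rho_0^{\text{std}}(\gamma)=M_\gamma$ and $\rho_0^{\text{std}}(\delta)=M_\delta$ automatically has $\rho_0^{\text{std}}(\text{puncture})=-I$, with stabilizer $\on{Cent}(M_\gamma)=\mb G_m$. I would then define $F$ by gluing these standard local representations, introducing for each $i=1,\ldots,3g-3$ a twist parameter $\mu_i\in\mb G_m$ that conjugates one side of $a_i$ by $\on{diag}(\mu_i,\mu_i^{-1})$. No independent twist is introduced along $\delta$: conjugating the whole representation by an element $h\in\on{Cent}(M_\gamma)$ leaves $\rho_0^{\text{std}}$ pointwise fixed while reproducing on the neighboring pants the exact effect of a $\delta$-twist by $h$, so such twists yield no new points of $X^{\mc P}_{t,-I}$.

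The main obstacle will be verifying properties $(1)$ and $(2)$ in the presence of this $\delta$-absorption. For $(1)$, one argues as in Whang that any $[\rho]\in X^{\mc P}_{t,-I}(\Qbar)$ can be conjugated globally to a form matching each reference $M_{a_i}$ along $a_i$; rigidity of irreducible $\Sigma_{0,3}$-representations on each $P_j$ ($j\neq 0$) then recovers the corresponding $\mu_i$ up to a product of $\{\pm I\}$-factors, giving finite fibers, and the equidimensionality of source and target upgrades this to surjectivity. For $(2)$, the Dehn twist action is a local computation at $a_i$ identical to Whang's: $\tau_{a_i}$ acts on a local system by insertion of the monodromy $M_{a_i}$ along a parallel arc, and in the basis of $M_{a_i}$-eigenvectors this amounts to multiplication of the gluing parameter $\mu_i$ by the eigenvalue $\lambda_i$. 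The only genuinely new verification, beyond what is already done in \autoref{proposition:whang-perfect-fibers}, is the $\delta$-absorption argument sketched above.
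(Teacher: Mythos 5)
Your proof takes essentially the same approach as the paper's: both defer to Whang's \cite[Proposition 4.3]{whang:ant} and isolate the single modification, namely that the pair of pants $P_0$ containing the puncture contributes no twist parameter because $\rho(\delta)=-\rho(\gamma)^{-1}$ forces $\rho\vert_{P_0}$ to be determined by $\rho(\gamma)$. The paper's proof is a one-sentence pointer to Whang noting this reduction from $\mb{G}_m^{3g-3+1}$ to $\mb{G}_m^{3g-3}$, whereas you spell out the diagonal $\mb{G}_m$-stabilizer mechanism that absorbs the would-be $\delta$-twist, but the core observation is identical.
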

\begin{proof}
	We refer the reader to the proof of \cite[Proposition $4.3$]{whang:ant}, as the argument is the same, with the only difference that in our case one obtains a morphism from $\mb{G}_m^{3g-3}$ to $X^\mathcal{P}_{t,-I}$, rather than from $\mb{G}_m^{3g-3+1}$ as in \cite[Proposition $4.3$]{whang:ant}. This is due to the fact that the restriction of a representation $\rho \in \Hom(\pi_1(\Sigma), \SL_2)_{-I}(\Qbar)$ to the pair of pants containing the puncture of $\Sigma$ is uniquely determined by $\rho(\gamma)$, since $\rho(\delta)=-\rho(\gamma)^{-1}$ as the monodromy at the puncture is $-I$.
\end{proof}

\begin{definition}
	Let $\mc P$ be a pants decomposition of $\Sigma$, $p \in X_{g,-I}$ and $t \coloneqq \tr_{\mc P,-I}(p)$. We say that $p$ is $\mc P$-good if $X^{\mc P}_{t,-I}$ is perfect and, for any $a \in \mc P \setminus \{ \gamma \cup \delta \}$, we have $\tr_a(X^\mathcal{P}_{t,-I}) \notin E$.
\end{definition}
Let $\Gamma_{-I}$ be the subgroup of the mapping class group of $\Sigma$ which is the identity on $\gamma$ and $\delta$. In particular $\Gamma_{-I}$ preserves the fibers of $\tr_{\gamma}$. We have the following:
\begin{proposition}\label{proposition:zariski-dense-orbit-minus-identity}
	Let $\mc P$ be a pants decomposition and let $p \in X_{g,-I}(\Qbar)$ be a $\mc P$-good point. Then $\Gamma_{-I} \cdot p$ is Zariski dense in $\tr_{\gamma}^{-1}(\tr_{\gamma}(p))$.
\end{proposition}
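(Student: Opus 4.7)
The plan is to adapt the proof of \autoref{proposition:zariski-dense-orbit} to the constrained setting of $\Gamma_{-I}$ acting on $X_{g,-I}$. Fix a number field $K$ containing the fields of definition of $p$ and $X_{g,-I}$, so that $\Gamma_{-I}\cdot p\subseteq X_{g,-I}(K)$ and $E\cap K$ is finite, and set $k:=\tr_\gamma(p)$. Since $\tr_\gamma^{-1}(k)$ is the union $\bigcup_{t'\in\mb A^{3g-4}} X^{\mc P}_{(t',k),-I}$ fibered over the coordinates $\tr_{a_1},\dots,\tr_{a_{3g-4}}$, the problem splits into (i) showing that $\tr_{(a_1,\dots,a_{3g-4})}(\Gamma_{-I}\cdot p)$ is Zariski dense in $\mb A^{3g-4}$, and (ii) showing that, for a Zariski dense set of $t'$, the intersection $\Gamma_{-I}\cdot p \,\cap\, X^{\mc P}_{(t',k),-I}$ is Zariski dense in the fiber. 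Together these force $\overline{\Gamma_{-I}\cdot p}=\tr_\gamma^{-1}(k)$.

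For (i) I mimic the inductive argument at the end of the proof of \autoref{proposition:zariski-dense-orbit}. Observing that $\mc P\setminus\{\gamma,\delta\}$ is a pants decomposition of the subsurface $\Sigma':=\Sigma_{g-1,2}$ complementary to the puncture-containing pair of pants $P$, for each $i=1,\dots,3g-4$ I choose a simple closed curve $b_i\subset\Sigma'$ with $i(a_i,b_i)\in\{1,2\}$ (as in \autoref{fig:curve-choice}) and with the pairs $\{a_i,b_i\}$ pairwise disjoint. Since each $b_i\subset\Sigma'$, each $\tau_{b_i}$ belongs to $\Gamma_{-I}$, and the induction of \autoref{proposition:zariski-dense-orbit}---invoking \autoref{lemma:zariski-dense-fiber} and \autoref{lemma:dominant} on the $(1,1)$- or $(0,4)$-subsurfaces glued along $a_i$ (each contained in $\Sigma'$, since neither pants adjacent to $a_i$ is the puncture-containing $P$)---carries over verbatim.

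For (ii), pick $t'$ in the Zariski dense open locus where $t'_i\notin E$ and $X^{\mc P}_{(t',k),-I}$ is perfect, and pick $p'\in\Gamma_{-I}\cdot p$ mapping to $t'$. The Dehn twists $\tau_{a_1},\dots,\tau_{a_{3g-4}},\tau_\gamma$ all lie in $\Gamma_{-I}$ (each admits a representative that is the identity on $\gamma\cup\delta$, as the twist annulus for $\tau_\gamma$ can be isotoped to one side of $\gamma$), and by the adapted Whang parametrization $F\colon\mb G_m^{3g-3}\to X^{\mc P}_{(t',k),-I}$ they lift to the full torus $T_{z_1},\dots,T_{z_{3g-3}}$. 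Provided $k\notin E$, the orbit of any lift of $p'$ under this torus is Zariski dense in $\mb G_m^{3g-3}$, and the finite-to-one surjection $F$ yields (ii).

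The main obstacle is the edge case $k\in E$: then the eigenvalue of the $\gamma$-monodromy has finite multiplicative order, so $T_{z_{3g-3}}$ has finite order and the torus orbit in (ii) is only $(3g-4)$-dimensional inside the $(3g-3)$-dimensional fiber. I expect this case requires exploiting that as $p'$ ranges over all preimages of a fixed $t'$ under $\tr_{(a_1,\dots,a_{3g-4})}$ in the orbit, the corresponding sheets $F(\mb G_m^{3g-4}\times\{u\})$ sweep out a Zariski dense subset of the fiber; alternatively, one can try to produce elements of $\Gamma_{-I}$ (e.g.\ commutators of the form $\tau_c\tau_{c'}\tau_c^{-1}\tau_{c'}^{-1}$ with $c,c'$ chosen in $\Sigma'$ so as to leave all $\tr_{a_i}$ and $\tr_\gamma$ invariant) whose action on the fiber shifts the residual $z_{3g-3}$-coordinate.
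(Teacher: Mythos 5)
Your proposal follows the same strategy as the paper, which gives only the one-line proof ``the same as \autoref{proposition:zariski-dense-orbit}, using only Dehn twists along curves disjoint from $\gamma$ and $\delta$.'' Your steps (i) and (ii) correctly unpack this: for (i), the subsurfaces $\Sigma_i$ glued along $a_i$ for $i\le 3g-4$ never contain the puncture-pant (since that pant touches $\mc P$ only through $\gamma,\delta$), so the $b_i$ can indeed be taken in $\Sigma'$ and the induction from \autoref{proposition:zariski-dense-orbit} applies verbatim; for (ii), the torus parametrization $F\colon\mb G_m^{3g-3}\to X^{\mc P}_{(t',\tr_\gamma(p)),-I}$ together with $\tau_{a_1},\dots,\tau_{a_{3g-4}},\tau_\gamma\in\Gamma_{-I}$ gives fiberwise density exactly as in \autoref{lemma:zariski-dense-fiber}.

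The edge case $\tr_\gamma(p)\in E$ that you flag is a genuine subtlety, and the paper's one-line proof does not address it either. However, it is not in force where the paper applies the proposition: in \autoref{proposition:existence-good-representation-minus-id} the constructed $p$ has $\tr_\gamma(p)=\tr M$ with $M\in\mc L_K$ satisfying $\tr M\notin E$, and likewise in the proof of \autoref{theorem:pgl2-density-minus-id} the matrices $M_n$ are chosen with $\tr M_n\notin E$. So the hypothesis $\tr_\gamma(p)\notin E$ is always satisfied in the paper's use of the proposition (and should arguably have been added to the ``$\mc P$-good'' definition or the statement). Your two suggested repairs for the $\tr_\gamma(p)\in E$ case go beyond what the paper does and are not needed for the paper's purposes, so I would not pursue them; it suffices to record the standing assumption $\tr_\gamma(p)\notin E$, under which your argument is complete.
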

\begin{proof}
	The proof is the same as \autoref{proposition:zariski-dense-orbit}, except that we only use Dehn twists along paths that do not intersect $\gamma$ and $\delta$.
\end{proof}
In light of the previous proposition, in order to prove \autoref{theorem:pgl2-density-minus-id} it is sufficient to find a number field $K$ and a pants decomposition $\mc P$ of $\Sigma$ for which there exists infinitely many $\mc P$-good points $p \in X_{g,-I}(\mc O_K)$ with \emph{distinct} traces along $\gamma$. We will construct these points in the next section.

\subsection{Construction of an integral representation}
We introduce a class of matrices that will play the same role as the class $\mc N_K$ used in \autoref{section:construction_integral_representation}.
\begin{definition}
	Let $K$ be a finite extension of $\mb Q(i)$. We define $\mc L_{K} \subset \SL_2(\mc O_K)$ as the set of matrices
	\begin{align*}
		\mc L_{K} &= \left\{ \begin{pmatrix}
			ad+bc & 2ab \\
			2cd & ad+bc \\
		\end{pmatrix}  \, \Big | \, a, b, c, d \in \mc O_K, ad-bc=1, cd \neq 0  \right\}  \\
		&= \left\{ \left[ \begin{pmatrix}
			a & b \\
			c & d \\
		\end{pmatrix} , \begin{pmatrix}
			i & 0 \\
			0 & -i \\
		\end{pmatrix} \right]   \, \Big | \, a, b, c, d \in \mc O_K, ad-bc=1, cd \neq 0 \right\}
	\end{align*}
\end{definition}

\begin{remark}\label{remark:unit-scaling}
	For $\lambda \in \mc O_K^{\times}$ we have
	\begin{equation*}
		 \left[ \begin{pmatrix}
			a \lambda & b \lambda^{-1} \\
			c \lambda & d \lambda^{-1} \\
		\end{pmatrix} , \begin{pmatrix}
			i & 0 \\
			0 & -i \\
		\end{pmatrix} \right]= \left[ \begin{pmatrix}
		a & b \\
		c & d \\
		\end{pmatrix} , \begin{pmatrix}
		i & 0 \\
		0 & -i \\
		\end{pmatrix} \right].
	\end{equation*}
	In particular, if $\mc O_K^{\times}$ is infinite, for any $M \in \mc L_K$ the set $\left\{ \tr A \, \Big | \, \left[A, \begin{pmatrix} 
	i & 0 \\
	0 & -i \\
	\end{pmatrix}\right]=M \right\}$ is infinite.
\end{remark}

\begin{remark}\label{rem:minus-inverse}
	If $M \in \mc L_K$ then $M^{-1} \in \mc L_K$ and $-M \in \mc L_K$. Indeed
	\begin{equation*}
		\left[ \begin{pmatrix}
			a & b \\
			c & d \\
		\end{pmatrix} , \begin{pmatrix}
			i & 0 \\
			0 & -i \\
		\end{pmatrix} \right]^{-1} = \left[ \begin{pmatrix}
		-a & b \\
		c & -d \\
		\end{pmatrix} , \begin{pmatrix}
		i & 0 \\
		0 & -i \\
		\end{pmatrix} \right] 
	\end{equation*} 
	and
	\begin{equation*}
		-\left[ \begin{pmatrix}
			a & b \\
			c & d \\
		\end{pmatrix} , \begin{pmatrix}
			i & 0 \\
			0 & -i \\
		\end{pmatrix} \right] = \left[ \begin{pmatrix}
			-b & a \\
			-d & c \\
		\end{pmatrix} , \begin{pmatrix}
			i & 0 \\
			0 & -i \\
		\end{pmatrix} \right] 
	\end{equation*} 
\end{remark}
The class of matrices $\mc L_K$ satisfies similar properties as $\mc N_K$.
Specifically, we have the following:
\begin{lemma}\label{lemma:existence-representation-two-holed-torus-minus-id}
	Assume $\mc O_K^{\times}$ is infinite. Let $M \in \mc L_K$. Then there exists a pants decomposition $\mc P$ of $\Sigma_{1,2}$ and a $\mc P$-good representation $\rho \colon \pi_1(\Sigma_{1,2}) \to \SL_2(\mc O_K)$ such that the monodromies along the first and second puncture are $M$ and $M^{-1}$, respectively.
\end{lemma}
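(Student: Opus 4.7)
My plan is to imitate the proof of \autoref{lemma:existence-representation-two-holed-torus} almost verbatim, with $\mc L_K$ playing the role of $\mc M_K$. I would use the same generators $\alpha, \beta, \gamma_1, \gamma_2$ of $\pi_1(\Sigma_{1,2})$ pictured in \autoref{figure:two-holed-torus}, with defining relations $[\alpha,\beta]\gamma_1\gamma_2 = 1$ and $\delta = \gamma_2\alpha$, and the pants decomposition $\mc P = \{\alpha, \delta\}$. Setting $\rho(\gamma_1) = M$ and $\rho(\gamma_2) = M^{-1}$, the relation forces $[\rho(\alpha),\rho(\beta)] = I$, so it suffices to choose $A = \rho(\alpha)$ and $B = \rho(\beta)$ in $\SL_2(\mc O_K)$ that commute. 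I would take $A = \begin{pmatrix} \lambda & t \\ 0 & \lambda^{-1} \end{pmatrix}$ with $\lambda \in \mc O_K^{\times}$ and $t \in \mc O_K$, and $B = A$ (or any power of $A$), which is automatically integral and commutes with $A$.

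Next I would verify the $\mc P$-good conditions. The traces of the pants curves are $\tr A = \lambda + \lambda^{-1}$ and $\tr\delta = \tr(M^{-1}A)$; both must lie outside $E$, and both must avoid $\pm 2$. Since $\mc O_K^\times$ is infinite and $E \cap K$ is finite, these hold for all but finitely many $(\lambda, t)$. The remaining requirement is the irreducibility condition \eqref{equation:numerical-condition-irreducibility} for the triple $\tr A, \tr(M^{-1}A), \tr M$ on the pair of pants containing the punctures, which by \cite[Lemma $3.3$]{whang:ant} is equivalent to saying that $A$ and $M$ share no common fixed point on $\mb P^1$ (the third monodromy $M^{-1}A$ is generated by these two, so adds no further constraint; and the second pant's irreducibility reduces to the same condition, since the fixed points of $A$ and $A^{-1}$ coincide).

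The only place where the class $\mc M_K$ enters the original proof is in observing that the bottom-left entry of $M$ is nonzero, so that $[1:0]$ (a fixed point of the upper-triangular $A$) is not fixed by $M$. For $M \in \mc L_K$, the bottom-left entry is $2cd$, which is nonzero by the defining condition $cd \neq 0$; so the same argument applies. The other fixed point of $A$, namely $[t : \lambda^{-1} - \lambda]$, is avoided by $M$'s (at most two) fixed points for all but a proper Zariski-closed set of $(\lambda, t) \in \mc O_K^\times \times \mc O_K$. Picking $(\lambda, t)$ avoiding the finite/closed set of obstructions produces the desired $\mc P$-good integral representation. The only conceptual point is the bottom-left entry observation above; the rest is routine calculation.
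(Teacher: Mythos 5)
Your proposal is correct and matches the paper's approach exactly: the paper's proof of this lemma is a one-line reference to \autoref{lemma:existence-representation-two-holed-torus}, with the remark that the argument "crucially uses that the bottom left entry of $M$ is non-zero." You have correctly identified precisely this point — for $M \in \mc L_K$ the bottom-left entry is $2cd \neq 0$, which is all that is needed to rule out $[1:0]$ as a common fixed point (the original lemma invokes that $u$ is a unit for $M\in\mc M_K$, but nonvanishing is sufficient) — and the remainder of your argument reproduces the original proof.
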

\begin{proof}
	The proof is the same as \autoref{lemma:existence-representation-two-holed-torus}, and crucially uses that the bottom left entry of $M$ is non-zero.
\end{proof}

\begin{lemma}\label{lemma:existence-integral-representation-torus-minus-id}
	Assume $\mc O_K^{\times}$ is infinite. For any $M \in \mc L_K$ there exists a pants decomposition $\mc P$ of $\Sigma_{1,1}$ and a $\mc P$-good representation $\rho \colon \pi_1(\Sigma_{1,1}) \to \SL_2(\mc O_K)$ with monodromy $M$ along the puncture.
\end{lemma}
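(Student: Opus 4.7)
The plan is to mimic the proof of \autoref{lemma:existence-integral-representation-torus}, using the scaling flexibility recorded in \autoref{remark:unit-scaling} in place of the one used there for $\mc N_K$. Fix the standard presentation $\pi_1(\Sigma_{1,1}) = \langle \alpha, \beta, \gamma \mid [\alpha,\beta]\gamma = 1 \rangle$ with $\gamma$ the loop around the puncture. By the definition of $\mc L_K$, we may write $M = [A, B_0]$ with $B_0 = \begin{pmatrix} i & 0 \\ 0 & -i \end{pmatrix}$ and $A = \begin{pmatrix} a & b \\ c & d \end{pmatrix} \in \SL_2(\mc O_K)$ satisfying $cd \neq 0$. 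The plan is to set $\rho(\alpha) = B_0$, $\rho(\beta) = A$, and $\rho(\gamma) = M$; the group relation is then satisfied because $[B_0, A] = [A, B_0]^{-1} = M^{-1}$. Choose the pants decomposition $\mc P = \{\beta\}$.

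The conditions for $\mc P$-goodness reduce, just as in the proof of \autoref{lemma:existence-integral-representation-torus}, to: $\tr A \notin E$ (which already implies $\tr A \neq \pm 2$), together with either $\tr M = 2$ (in which case the $(g,n,k) = (1,1,2)$ exception in \autoref{definition:perfect-fiber} applies and nothing further is needed) or the irreducibility condition \eqref{equation:numerical-condition-irreducibility} applied to the three boundary traces $\tr A, \tr A, \tr M$ of the pair of pants $\Sigma_{1,1} \setminus \mc P$; the latter factors as $(2 - \tr M)\bigl((\tr A)^2 - (2 + \tr M)\bigr) \neq 0$.

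The key input is \autoref{remark:unit-scaling}: for $\lambda \in \mc O_K^\times$, replacing $A$ by $A_\lambda = \begin{pmatrix} a\lambda & b\lambda^{-1} \\ c\lambda & d\lambda^{-1} \end{pmatrix}$ keeps $A_\lambda \in \SL_2(\mc O_K)$ and leaves the commutator $[A_\lambda, B_0] = M$ unchanged, while varying the trace to $\tr A_\lambda = a\lambda + d\lambda^{-1}$. Since $cd \neq 0$ forces $d \neq 0$, this is a genuinely non-constant Laurent polynomial in $\lambda$, so $\tr A_\lambda$ takes infinitely many values as $\lambda$ ranges over the infinite group $\mc O_K^\times$. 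Only finitely many $\lambda$ need to be excluded: those producing $\tr A_\lambda \in E \cap K$ (a finite set because $K$ is a number field), and, when $\tr M \neq 2$, the at most two solutions of $(\tr A_\lambda)^2 = 2 + \tr M$. Picking any $\lambda$ outside this finite exceptional set yields the desired $\mc P$-good representation $\rho_\lambda \colon \pi_1(\Sigma_{1,1}) \to \SL_2(\mc O_K)$.

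The only real obstacle is securing a one-parameter family $\{A_\lambda\}$ in $\SL_2(\mc O_K)$ all satisfying $[A_\lambda, B_0] = M$ but with unboundedly varying trace. The condition $cd \neq 0$ built into the definition of $\mc L_K$ is exactly what excludes the pathological anti-diagonal case (where $\tr A_\lambda$ would be identically $0$ regardless of scaling, and which in any event forces $M = -I \notin \mc L_K$), so this obstacle dissolves on inspection and the rest is a finite exclusion argument.
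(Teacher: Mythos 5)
Your proposal is correct and follows exactly the paper's intended route: set $\rho(\alpha) = \begin{pmatrix} i & 0 \\ 0 & -i \end{pmatrix}$, $\rho(\beta) = A$, $\rho(\gamma) = M$, take $\mc P = \{\beta\}$, and use the scaling in \autoref{remark:unit-scaling} (together with the fact that $d \neq 0$ forces $\tr A_\lambda = a\lambda + d\lambda^{-1}$ to be non-constant) to choose $\lambda$ so that $\tr A_\lambda \notin E$ and $(\tr A_\lambda)^2 \neq 2 + \tr M$. This is precisely what the paper means by calling the lemma ``an immediate consequence of Remark~\ref{remark:unit-scaling},'' paralleling the proof of \autoref{lemma:existence-integral-representation-torus} with $\mc L_K$ in place of $\mc N_K$.
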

\begin{proof}
	This is an immediate consequence of \autoref{remark:unit-scaling}.
\end{proof}

\begin{proposition}\label{proposition:existence-representation-positive-genus-minus-id}
	Let $K$ be a number field, $g \ge 1$ and $M \in \mc L_K$ such that $\tr M \notin E$. Assume that $\mc O_K^{\times}$ is infinite. There exists a pants decomposition $\mathcal{P}$ of $\Sigma=\Sigma_{g,1}$ and a $\mc P$-good representation $\rho \colon \pi_1(\Sigma) \to \SL_2(\mc O_K)$ with monodromy $M$ at the puncture.
\end{proposition}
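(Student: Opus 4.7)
The plan is to mimic the proof of \autoref{proposition:existence-representation-positive-genus}, replacing $\mc N_K$ by $\mc L_K$ throughout, with the analogous input lemmas already established in this subsection.

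I would argue by induction on $g$. The base case $g=1$ is immediate from \autoref{lemma:existence-integral-representation-torus-minus-id}, which produces a pants decomposition $\mc P$ of $\Sigma_{1,1}$ and a $\mc P$-good representation $\rho\colon \pi_1(\Sigma_{1,1})\to \SL_2(\mc O_K)$ whose monodromy around the puncture is exactly $M$. (The hypothesis $\tr M\notin E$ is not even needed in this step, but will be used in the induction.)

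For $g\ge 2$, I would choose a simple closed curve $\gamma$ in $\Sigma=\Sigma_{g,1}$ separating $\Sigma$ into a genus one subsurface $\Sigma_1$ of type $(1,2)$ containing the puncture of $\Sigma$ (with the second puncture along $\gamma$) and a subsurface $\Sigma_2$ of type $(g-1,1)$ with its puncture along $\gamma$. By \autoref{lemma:existence-representation-two-holed-torus-minus-id}, there exists a pants decomposition $\mc P_1$ of $\Sigma_1$ and a $\mc P_1$-good representation $\rho_1\colon \pi_1(\Sigma_1)\to \SL_2(\mc O_K)$ whose monodromy at the puncture of $\Sigma$ is $M$ and whose monodromy along $\gamma$ is $M^{-1}$. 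By \autoref{rem:minus-inverse}, $M^{-1}\in \mc L_K$, and clearly $\tr(M^{-1})=\tr M\notin E$, so the inductive hypothesis applies to $\Sigma_2$ with boundary monodromy $M$ (the matrix on the $\Sigma_2$ side agrees with the inverse of $M^{-1}$ up to conjugation), producing a pants decomposition $\mc P_2$ of $\Sigma_2$ and a $\mc P_2$-good representation $\rho_2\colon\pi_1(\Sigma_2)\to \SL_2(\mc O_K)$ with the correct boundary monodromy. Gluing $\rho_1$ and $\rho_2$ along $\gamma$ yields a representation $\rho$ of $\pi_1(\Sigma)$ with monodromy $M$ at the puncture.

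The main thing to verify, rather than an obstacle, is that the pants decomposition $\mc P\coloneqq \{\gamma\}\cup \mc P_1\cup \mc P_2$ makes $\rho$ a $\mc P$-good representation in the sense of \autoref{defn:p-good}. The trace-non-in-$E$ and trace-$\neq\pm 2$ conditions along curves of $\mc P_1$ and $\mc P_2$ hold by the good-ness of $\rho_1$ and $\rho_2$ respectively; along $\gamma$ they follow from $\tr(M^{-1})=\tr M\notin E$ (which in particular forces $\tr M\neq \pm 2$, since $\pm 2\in E$). Irreducibility on each pair of pants is inherited from $\mc P_i$-goodness of $\rho_i$, since the pants decompositions of $\Sigma_1$ and $\Sigma_2$ together with $\gamma$ induce exactly the pants of $\mc P$. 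This completes the induction.
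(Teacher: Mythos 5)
Your proposal is correct and follows essentially the same route as the paper. The paper's own proof simply states that the argument is analogous to that of \autoref{proposition:existence-representation-positive-genus} and ``crucially uses that $\mc L_K$ is closed under inversion''; your proposal correctly unwinds this: base case via \autoref{lemma:existence-integral-representation-torus-minus-id}, inductive step via \autoref{lemma:existence-representation-two-holed-torus-minus-id} together with \autoref{rem:minus-inverse} and the observation that inversion preserves trace, then gluing. One small clarification on your parenthetical about the monodromy on the $\Sigma_2$ side: to glue $\rho_1$ and $\rho_2$ along $\gamma$ one needs $\rho_1(\gamma)=\rho_2(\gamma)$ as matrices (not merely up to conjugacy), so one applies the induction hypothesis to $\Sigma_2$ with the boundary matrix $M^{-1}$ exactly as produced by \autoref{lemma:existence-representation-two-holed-torus-minus-id}; this is licit precisely because $M^{-1}\in\mc L_K$ by \autoref{rem:minus-inverse} and $\tr(M^{-1})=\tr M\notin E$. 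With that phrasing fixed, your verification of $\mc P$-goodness for $\mc P=\{\gamma\}\cup\mc P_1\cup\mc P_2$ is correct.
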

\begin{proof}
	The proof is analogous to \autoref{proposition:existence-representation-positive-genus} and crucially uses that $\mc L_K$ is closed under inversion.
\end{proof}

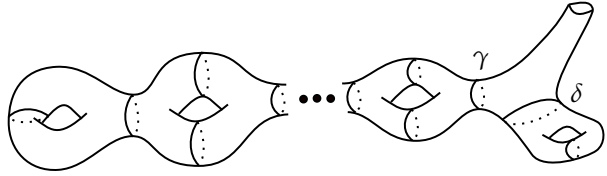
\begin{figure}[h]

\tikzset{every picture/.style={line width=0.75pt}} 

\begin{tikzpicture}[x=0.75pt,y=0.75pt,yscale=-1,xscale=1]

\draw    (257,161.8) .. controls (234,162.8) and (236,187.8) .. (212,187.8) .. controls (188,187.8) and (190.38,172.43) .. (178,172.8) .. controls (165.63,173.18) and (156,188.8) .. (141,189.8) .. controls (126,190.8) and (116,179.8) .. (116,165.8) .. controls (116,151.8) and (122,138.8) .. (140,137.8) .. controls (158,136.8) and (167.75,152.8) .. (180,151.8) .. controls (192.25,150.8) and (187,131.8) .. (212,130.8) .. controls (237,129.8) and (231,146.8) .. (256,146.8) ;
\draw    (132.95,166) .. controls (144.72,152) and (145.26,157.2) .. (152.86,164) ;
\draw    (128.6,163.6) .. controls (137.11,169.4) and (140.55,174.8) .. (155.4,161.2) ;
\draw    (201.8,161.6) .. controls (214.8,147.6) and (215.4,152.8) .. (223.8,159.6) ;
\draw    (197,159.2) .. controls (206.4,165) and (210.2,170.4) .. (226.6,156.8) ;
\draw  [dash pattern={on 0.84pt off 2.51pt}]  (251.67,147.14) .. controls (254.67,150.73) and (257.07,158.73) .. (252.67,162.14) ;
\draw    (251.67,147.14) .. controls (247.07,151.93) and (247.47,158.73) .. (252.67,162.14) ;
\draw  [fill={rgb, 255:red, 14; green, 0; blue, 0 }  ,fill opacity=1 ] (263,154.1) .. controls (263,153.16) and (263.76,152.4) .. (264.7,152.4) .. controls (265.64,152.4) and (266.4,153.16) .. (266.4,154.1) .. controls (266.4,155.04) and (265.64,155.8) .. (264.7,155.8) .. controls (263.76,155.8) and (263,155.04) .. (263,154.1) -- cycle ;
\draw  [fill={rgb, 255:red, 14; green, 0; blue, 0 }  ,fill opacity=1 ] (269.8,153.7) .. controls (269.8,152.76) and (270.56,152) .. (271.5,152) .. controls (272.44,152) and (273.2,152.76) .. (273.2,153.7) .. controls (273.2,154.64) and (272.44,155.4) .. (271.5,155.4) .. controls (270.56,155.4) and (269.8,154.64) .. (269.8,153.7) -- cycle ;
\draw  [fill={rgb, 255:red, 14; green, 0; blue, 0 }  ,fill opacity=1 ] (276.71,153.81) .. controls (276.71,152.88) and (277.48,152.11) .. (278.41,152.11) .. controls (279.35,152.11) and (280.11,152.88) .. (280.11,153.81) .. controls (280.11,154.75) and (279.35,155.51) .. (278.41,155.51) .. controls (277.48,155.51) and (276.71,154.75) .. (276.71,153.81) -- cycle ;
\draw    (284.17,146.17) .. controls (296.83,146.67) and (302.5,133.83) .. (319.5,133.67) .. controls (336.5,133.5) and (339.83,144.67) .. (350.5,144.67) .. controls (361.17,144.67) and (371.5,138.67) .. (379.5,130.33) .. controls (387.5,122) and (392.83,117) .. (398.5,106.33) ;
\draw    (287,160.64) .. controls (299.67,161.14) and (304.67,175.3) .. (321.67,175.14) .. controls (338.67,174.97) and (342.67,159.14) .. (353.33,159.14) .. controls (364,159.14) and (373.5,173.33) .. (379.83,182) .. controls (386.17,190.67) and (407.67,183.47) .. (412.33,180.14) .. controls (417,176.8) and (416.83,169.33) .. (413.83,166.33) .. controls (410.83,163.33) and (396.75,161.13) .. (392.83,153.67) .. controls (388.92,146.21) and (408.67,117.8) .. (410.67,109.14) ;
\draw  [dash pattern={on 0.84pt off 2.51pt}]  (352.33,144.14) .. controls (355.33,147.73) and (357.73,155.73) .. (353.33,159.14) ;
\draw    (352.33,144.14) .. controls (347.73,148.93) and (348.13,155.73) .. (353.33,159.14) ;
\draw    (308.8,155.6) .. controls (321.8,141.6) and (322.4,146.8) .. (330.8,153.6) ;
\draw    (304,153.2) .. controls (313.4,159) and (317.2,164.4) .. (333.6,150.8) ;
\draw    (388.75,173.62) .. controls (398.46,164.72) and (398.9,168.03) .. (405.18,172.35) ;
\draw    (385.17,172.1) .. controls (392.18,175.78) and (395.02,179.22) .. (407.27,170.57) ;
\draw    (364.83,164.67) .. controls (368.5,161.17) and (385.5,150.67) .. (394.17,155.67) ;
\draw  [dash pattern={on 0.84pt off 2.51pt}]  (394.17,155.67) .. controls (397.17,159.26) and (372.5,171.67) .. (364.83,164.67) ;
\draw    (398.5,106.33) .. controls (403.13,105.62) and (410.02,103.61) .. (410.67,109.14) ;
\draw    (398.63,106.2) .. controls (400.36,112.61) and (405.07,111.84) .. (410.67,109.14) ;
\draw  [dash pattern={on 0.84pt off 2.51pt}]  (290,145.8) .. controls (293,149.4) and (295.4,157.4) .. (291,160.8) ;
\draw    (290,145.8) .. controls (285.4,150.6) and (285.8,157.4) .. (291,160.8) ;
\draw  [dash pattern={on 0.84pt off 2.51pt}]  (178.29,152) .. controls (181.15,157.18) and (183.24,168.48) .. (178.71,173) ;
\draw    (178.29,152) .. controls (173.51,158.45) and (173.64,167.96) .. (178.71,173) ;
\draw  [dash pattern={on 0.84pt off 2.51pt}]  (213.55,130.33) .. controls (216.62,135.85) and (218.87,147.86) .. (214,152.67) ;
\draw    (213.55,130.33) .. controls (208.41,137.19) and (208.56,147.31) .. (214,152.67) ;
\draw  [dash pattern={on 0.84pt off 2.51pt}]  (211.29,165.67) .. controls (214.15,171.01) and (216.24,182.67) .. (211.71,187.33) ;
\draw    (211.29,165.67) .. controls (206.51,172.32) and (206.64,182.14) .. (211.71,187.33) ;
\draw  [dash pattern={on 0.84pt off 2.51pt}]  (319.29,159.33) .. controls (322.15,163.2) and (324.24,171.63) .. (319.71,175) ;
\draw    (319.29,159.33) .. controls (314.51,164.14) and (314.64,171.24) .. (319.71,175) ;
\draw  [dash pattern={on 0.84pt off 2.51pt}]  (320.62,134) .. controls (323.48,137.13) and (325.57,143.94) .. (321.04,146.67) ;
\draw    (320.62,134) .. controls (315.84,137.89) and (315.98,143.63) .. (321.04,146.67) ;
\draw  [dash pattern={on 0.84pt off 2.51pt}]  (136.17,162.73) .. controls (131.16,165.58) and (120.26,167.65) .. (115.91,163.12) ;
\draw    (136.17,162.73) .. controls (129.95,157.94) and (120.77,158.06) .. (115.91,163.12) ;
\draw  [dash pattern={on 0.84pt off 2.51pt}]  (400.62,174.33) .. controls (403.48,176.88) and (405.57,182.44) .. (401.04,184.67) ;
\draw    (400.62,174.33) .. controls (395.84,177.51) and (395.98,182.19) .. (401.04,184.67) ;

\draw (348.5,128.23) node [anchor=north west][inner sep=0.75pt]  [font=\footnotesize]  {$\gamma $};
\draw (397.83,144.57) node [anchor=north west][inner sep=0.75pt]  [font=\footnotesize]  {$\delta $};

\end{tikzpicture}

	\caption{A pants decomposition as in the proof of \autoref{proposition:existence-good-representation-minus-id}}
	\label{figure:pgl2construction}
	\centering
\end{figure}

We may now prove the main result of this section:
\begin{proposition}\label{proposition:existence-good-representation-minus-id}
	Let $K$ be a number field such that $\mc O_K^{\times}$ is infinite.
	Let $g \ge 2$, $\Sigma$ a surface of type $(g,1)$, $\mc P$ the pants decomposition of \autoref{figure:pgl2construction} and $M \in \mc L_K$ such that $\tr M \notin E$. There exists a $\mc P$-good  $\SL_2(\mc O_K)$-representation with monodromy $-I$ along the puncture and monodromy $M$ along $\gamma$.
\end{proposition}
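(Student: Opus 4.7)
The plan is to decompose $\Sigma = \Sigma_{g,1}$ along $\gamma$ and $\delta$ into three subsurfaces: the pair of pants $P_0$ containing the puncture, a one-holed torus $\Sigma_1$ bounded by $\gamma$, and a surface $\Sigma_2 = \Sigma_{g-1,1}$ bounded by $\delta$. This realizes the pants decomposition of \autoref{figure:pgl2construction} as $\mc P = \{\gamma, \delta\} \cup \mc P_1 \cup \mc P_2$, where $\mc P_i$ is a pants decomposition of $\Sigma_i$. On $P_0$, define $\rho(\gamma) = M$, $\rho(\delta) = -M^{-1}$, and monodromy $-I$ at the puncture; this is consistent with the pants relation since $M \cdot (-M^{-1}) \cdot (-I) = I$, and both $M$ and $-M^{-1}$ lie in $\mc L_K$ by \autoref{rem:minus-inverse}.

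On $\Sigma_1 = \Sigma_{1,1}$ I would apply \autoref{lemma:existence-integral-representation-torus-minus-id} with $C = M^{-1} \in \mc L_K$ (using that $\mc L_K$ is closed under inversion) to obtain a $\mc P_1$-good integral representation with monodromy $M^{-1}$ at the boundary, matching the $\Sigma_1$-side of $\gamma$. On $\Sigma_2$, I would apply \autoref{proposition:existence-representation-positive-genus-minus-id} with $-M \in \mc L_K$ (closure of $\mc L_K$ under negation) to obtain a $\mc P_2$-good integral representation with monodromy $-M$ at the boundary. The hypothesis $\tr(-M) \notin E$ of the latter follows from $\tr M \notin E$ together with the identity $E = -E$: indeed, if $\zeta \in \mu_\infty$ then $-\zeta \in \mu_\infty$, and $-\zeta + (-\zeta)^{-1} = -(\zeta + \zeta^{-1})$.

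Gluing the three representations along $\gamma$ and $\delta$ yields an $\SL_2(\mc O_K)$-representation on $\Sigma$ with the prescribed monodromies. The $\mc P$-goodness verification is then essentially local: along $\gamma$ and $\delta$, the traces $\pm \tr M$ avoid $\{-2, +2\} \subset E$ by the hypothesis $\tr M \notin E$; along the remaining curves of $\mc P$, both the trace-not-in-$E$ conditions and the irreducibility of the restriction to each pants component (other than $P_0$, which is excluded by the definition of $\mc P$-good in the $-I$ setting) follow from the $\mc P_i$-goodness of the subsurface representations produced above. The only real choice to be made is the topological one of splitting $\Sigma$ into this $P_0 \cup \Sigma_1 \cup \Sigma_2$ shape; once made, the construction of each piece is provided by the already-proven lemmas, and the main obstacle---that $\mc L_K$ is not closed under arbitrary products, so $-M^{-1}$ need not be expressible as $[A,J]$ with the same $A$---is side-stepped because we only need closure under inversion and negation.
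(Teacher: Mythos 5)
Your proof is correct and takes essentially the same approach as the paper's: decompose $\Sigma$ along $\gamma$ and $\delta$ into a pair of pants containing the puncture, a once-punctured torus, and a $\Sigma_{g-1,1}$, construct representations on each piece using \autoref{lemma:existence-integral-representation-torus-minus-id} and \autoref{proposition:existence-representation-positive-genus-minus-id} together with the closure of $\mc L_K$ under inversion and negation, and glue. The only (immaterial) differences are that you attach the once-punctured torus along $\gamma$ and the $\Sigma_{g-1,1}$ piece along $\delta$, whereas the paper does the reverse, and that you use the convention in which the monodromies on the two sides of a gluing curve are mutual inverses rather than equal.
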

\begin{proof}
	Consider separating paths $\gamma$ and $\delta$ as in \autoref{figure:pgl2construction} that cut $\Sigma$ into the following three surfaces:
	\begin{enumerate}
		\item $\Sigma_1$ of type $(g-1,1)$ with a puncture along $\gamma$,
		\item $\Sigma_2$ of type $(0,3)$, where the punctures correspond to $\gamma$, $\delta$ and the puncture of $\Sigma$,
		\item $\Sigma_3$ of type $(1,1)$, with a puncture along $\delta$.
	\end{enumerate}
	By \autoref{proposition:existence-representation-positive-genus-minus-id}, there exists a pants decomposition $\mc P_1$ of $\Sigma_1$ and a $\mc P_1$-good  $\SL_2(\mc O_K)$-representation $\rho_1$ on $\Sigma_1$ with monodromy $M$ along $\gamma$.
	Since $-M^{-1} \in \mc L_K$ (see \autoref{rem:minus-inverse}), by \autoref{lemma:existence-integral-representation-torus-minus-id} there exists a pants decomposition $\mc P_3$ of $\Sigma_3$ and a $\mc P_3$-good  $\SL_2(\mc O_K)$-representation $\rho_3$ on $\Sigma_3$ with monodromy $-M^{-1}$ along $\delta$.
	Finally, consider the  $\SL_2(\mc O_K)$-representation $\rho_2$ on $\Sigma_2$ with monodromy $M$ along $\gamma$, $-M^{-1}$ along $\delta$ and $-I$ at the puncture.
	
	Let $\mc P=\mc P_1 \cup \{ \gamma\} \cup \{\delta\} \cup \mc P_3$.
	Since $\rho_1,\rho_2$ and $\rho_3$ agree on $\gamma$ and $\delta$, we may glue them and find the sought $\mc P$-good  $\SL_2(\mc O_K)$-representation $\rho$ on $\Sigma$ with monodromy $-I$ along the puncture and monodromy $M$ along $\gamma$.	
\end{proof}

\begin{proof}[Proof of \autoref{theorem:pgl2-density-minus-id}]
	If $g=1$ the claim is trivial since $X_{1,-I}$ is a single point, as it corresponds to the point $(0,0,0)$ of the Markov surface $x^2+y^2+z^2-xyz=0$ (see \autoref{proposition:degenerate-character-varieties}). If $g>1$, enlarge $K$ so that $\mc O_K^{\times}$ is infinite and consider a subset $\{ M_n\}_{n \ge 1} \subset \mc L_K$ such that $\tr M_n \notin E$ for all $n$ and the set $\{\tr M_n \}_{n\ge 1}$ is infinite. By \autoref{proposition:existence-good-representation-minus-id} and \autoref{proposition:zariski-dense-orbit-minus-identity}, for each $n$ the fiber $\tr_{\gamma}^{-1}(\tr M_n)$ of $\tr_{\gamma} \colon X_{g,-I} \to \mb A^1 $ contains a Zariski dense set of $\mc O_K$-points. The claim follows from the infinitude of $\{\tr M_n \}_{n\ge 1}$.
\end{proof}

\section{Proof of \autoref{thm:main-theorem}}\label{sec:corlette-simpson}
We now begin preparations for the proof of \autoref{thm:main-theorem}, on potential density of integral points in relative character varieties of smooth quasi-projective varieties $Y$ with $\dim Y >1$. We will require a few preparatory lemmas. 
\subsection{Relative character varieties and morphisms}
let $Y$ be a smooth complex variety equipped with a projective simple normal crossings compactification $\overline{Y}$, with boundary divisor $D=\cup_{i=1}^n D_i$. Let $G$ be $\SL_{2, \mathbb{Z}}$ or $\PGL_{2, \mathbb{Z}}$, $K$ a number field, and $\underline{C}=(C_1, \cdots, C_n)\in (G/_{\text{ad}}G)(\mathscr{O}_K)^n$. Recall that we are studying $X_{G, \underline{C}}(Y)$, the relative character variety parametrizing representations with fixed traces along $D_i$, defined as in the introduction.

\begin{remark}
A priori $X_{G, \underline{C}}(Y)$ depends on the compactification $\overline{Y}$, but in fact this is not the case---further blow-ups add additional components $D_j'$ to $D$ but their boundary data is already determined by $\underline{C}$ (as a small loop around the exceptional divisor is a product of \emph{commuting} loops about the strict transforms of the components of $D$ it meets). Given two simple normal crossings compactifications of $Y$, we may thus dominate them by a third to compare relative character varieties. We leave verifying the details to the reader; we will in what follows freely replace $\overline{Y}$ with a blowup.
\end{remark}

\begin{lemma}\label{lem:integral-conjugacy-class}
Let $Z$ be an orbicurve, i.e.~a smooth Deligne-Mumford curve containing a scheme as a dense open subset. With notation as above, let $[\rho]\in X_{G,\underline{C}}(Y)$ be a point and $f: Y\to Z$ be a morphism with connected fibers so that $\rho$ factors through the induced map $\pi_1(Y)\to \pi_1(Z)$ (i.e.~$\rho$ ``factors through an orbicurve" in the language of \cite{corlette2008classification}). Then there exists $\underline{C}'\in (G/_{\text{ad}}G)(\mathscr{O}_K)^m$, for appropriate $m$, so that $[\rho]$ is in the image of the induced map $$f^*: X_{G, \underline{C}'}(Z)\to X_{G, \underline{C}}(Y).$$
\end{lemma}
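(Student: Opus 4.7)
The plan is to explicitly construct $\underline{C}'$ as the tuple of boundary traces of the descended representation $\bar\rho$, and then to verify integrality via an elementary $n$-th root argument. First, using the freedom in choosing the compactification (see the preceding remark), I would blow up $\overline{Y}$ so that $f$ extends to a morphism of orbifold compactifications $\bar f \colon \overline{Y} \to \overline{Z}$; each boundary component $D_i$ then either collapses to a single point of $\overline{Z}$ (vertical) or dominates $\overline{Z}$ (horizontal). Because $f$ has connected fibers, $f_* \colon \pi_1(Y) \twoheadrightarrow \pi_1(Z)$ is surjective, so the factorization hypothesis uniquely determines $\bar\rho \colon \pi_1(Z) \to G$ with $\rho = \bar\rho \circ f_*$, and the class $[\bar\rho] \in X_G(Z)$ shares the field of definition of $[\rho]$.

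Let $p_1, \dots, p_m$ be the punctures and stacky points of $Z$, with small (orbifold) loops $\gamma_j \in \pi_1(Z)$ around them. I would simply define $C'_j \coloneqq [\bar\rho(\gamma_j)] \in (G/_{\text{ad}} G)$; by construction $[\bar\rho] \in X_{G,\underline{C}'}(Z)$ and $f^*[\bar\rho] = [\rho]$, so the only remaining issue is to prove each $C'_j$ lies in $(G/_{\text{ad}} G)(\mc{O}_K)$. For each $p_j$, since $p_j$ lies in the orbifold boundary of $Z$ (hence outside the image of $Y$), the fiber $\bar f^{-1}(p_j)$ is entirely supported on $D$, so some vertical boundary component $D_{i_j}$ maps to $p_j$ with a well-defined multiplicity $n_j \ge 1$. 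A local-coordinate computation around a generic smooth point of $D_{i_j}$ (where $\bar f$ looks like $(x_1,\ldots,x_d) \mapsto x_1^{n_j} u(x_2,\ldots,x_d)$ for a unit $u$) shows that the meridian $\gamma_{i_j}$ of $D_{i_j}$ pushes forward to a conjugate of $\gamma_j^{n_j}$, giving
\begin{equation*}
\bar\rho(\gamma_j)^{n_j} \sim \rho(\gamma_{i_j}),
\end{equation*}
and hence the adjoint invariant of $\bar\rho(\gamma_j)^{n_j}$ equals $C_{i_j} \in \mc{O}_K$.

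The integrality of $C'_j$ then reduces to the following elementary fact: if $A \in \SL_2(\bar K)$ has eigenvalues $\lambda, \lambda^{-1}$ and $\tr(A^{n}) = \lambda^n + \lambda^{-n} \in \mc{O}_K$, then $\tr A \in \bar{\mathbb{Z}}$. Indeed $\lambda^n$ is a root of $X^2 - \tr(A^n) X + 1 \in \mc{O}_K[X]$, so $\lambda^n$ is an algebraic integer, whence $\lambda$ is a root of the monic integral polynomial $X^n - \lambda^n$ and is itself an algebraic integer, and finally $\tr A = \lambda + \lambda^{-1} \in \bar{\mathbb{Z}}$. Combined with the fact that $\tr A \in K$ (via $K$-rationality of $[\bar\rho]$, inherited from $[\rho]$, possibly after enlarging $K$ so that $[\rho]$ becomes $K$-rational), this gives $\tr A \in \mc{O}_K$. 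The $\PGL_2$ case is handled identically by applying the same argument to the function $f = \tr^2/\det = t + t^{-1} + 2$, where $t$ is the eigenvalue ratio of any lift to $\GL_2$; the powers $A^n$ give $t^n + t^{-n} + 2$, and roots of algebraic integers remain algebraic integers as before.

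The principal obstacle I anticipate is the topological identification $f_*(\gamma_{i_j}) \sim \gamma_j^{n_j}$: this needs care to handle base-point dependence (absorbed into the conjugation $\sim$, which is harmless for traces), to treat stacky $p_j$ via orbifold fundamental groups, and to confirm that horizontal components $D_i$ contribute trivially (their meridians push forward to null-homotopic loops, forcing $C_i = [I]$ automatically under the factorization hypothesis, which is consistent with what's needed). A secondary subtlety is the field of definition: the argument requires $[\bar\rho]$ to be $K$-rational so that traces lie in $K$, which is ensured if $[\rho]$ itself is $K$-rational; if not, one should first enlarge $K$ to a field over which $[\rho]$ is defined before invoking the lemma.
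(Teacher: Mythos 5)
Your proposal follows the same route as the paper's (very terse) proof: blow up so $f$ extends to $\overline{f}:\overline{Y}\to\overline{Z}$, observe that a meridian of a boundary component $D_{i_j}$ mapping to a puncture $p_j$ pushes forward to (a conjugate of) $\gamma_j^{n_j}$, and then extract the trace of $\bar\rho(\gamma_j)$ from the integrality of $\tr\bigl(\bar\rho(\gamma_j)^{n_j}\bigr)=C_{i_j}$ via an $n$-th root argument. The paper phrases this as a case analysis on each $D_i$ (dominating $\overline{Z}$, mapping into $Z$, or mapping into $E=\overline{Z}\setminus Z$) and dispatches case three in one line with exactly your observation that ``a small loop around $\overline{f}(D_i)$ has some multiple lifting to a small loop around $D_i$''; you instead argue from each boundary point $p_j$ backwards, which is a harmless reorganization. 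You also make explicit the elementary root-of-unity/algebraic-integer lemma and the $\PGL_2$ translation via $\tr^2/\det=t+t^{-1}+2$, both of which the paper leaves implicit, and you correctly flag the field-of-definition issue (the conclusion really lands in $\overline{\mathbb{Z}}$, and one should be willing to enlarge $K$; note that in the paper's application $[\rho]$ is a generic point of a positive-dimensional component, so $[\rho]$ itself is never $K$-rational, but the $n$-th root argument forces the boundary traces to be algebraic integers regardless).

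One small inaccuracy: you list $p_1,\dots,p_m$ as ``the punctures \emph{and stacky points} of $Z$'' and then assert that each $p_j$ ``lies in the orbifold boundary of $Z$ (hence outside the image of $Y$).'' That is false for an interior stacky point of $Z$, whose preimage under $\overline{f}$ need not lie in $D$, so the meridian-pushforward argument does not apply there. This is easily repaired: for an interior stacky point of order $m_j$ the orbifold loop satisfies $\gamma_j^{m_j}=1$ in $\pi_1(Z)$, so $\bar\rho(\gamma_j)$ is torsion and its trace is a sum of roots of unity, hence automatically an algebraic integer. (The paper sidesteps this by indexing $\underline{C}'$ only by the actual punctures $E$, leaving the finitely many possible conjugacy classes at interior stacky points to the component decomposition of the orbicurve character variety.) With that patch your argument is complete.
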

\begin{proof}
	That there exists some $\underline{C}'$ such that $[\rho]$ is in the image of  $f^*: X_{G, \underline{C}'}(Z)\to X_{G, \underline{C}}(Y)$ is clear by the assumption that $\rho$ factors through an orbicurve; all that needs to be checked is that we may take $\underline{C}'\in (G/_{\text{ad}}G)(\mathscr{O}_K)^m$, i.e.~that it is integral. After replacing $\overline{Y}$ by a blowup, we may assume $f$ extends to a map $$\overline{f}: \overline{Y}\to \overline{Z},$$ where $\overline{Z}$ is a smooth proper orbicurve containing $Z$. Let $E=\overline{Z}\setminus Z$. 
	
	Now each component $D_i$ of $D=\overline{Y}\setminus Y$ either 
	\begin{enumerate}
	\item 	dominates $\overline{Z}$,
	\item maps to a point of $Z$, or
	\item maps to a point of $E$.
	\end{enumerate}
In the first two cases, we have that $C_i$ is the class of the identity in $(G/_{\text{ad}}G)(\mathscr{O}_K)^m$.  In the last case, a small loop around $\overline{f}(D_i)$ has some multiple lifting to a small loop around $D_i$. Hence we may take $C'_i$ to be such that some integer power of it is in $C_i$, whence it is  in $(G/_{\text{ad}}G)(\mathscr{O}_K)$ as desired.
\end{proof}

\subsection{Non-Zariski dense representations} We start by recalling the classification of maximal-Zariski closed subgroups of $\SL_{2, \mathbb{C}}$, resp.~$PGL_{2,\mathbb{C}}$. Any such is either finite, a Borel (conjugate to the subgroup of matrices of the form $$\begin{pmatrix} a & b\\0 & c\end{pmatrix},$$ where the $a,b,c\in \mathbb{C}$ and $ac=1$) or the normalizer of a maximal torus (conjugate to the subgroup of matrices of the form $$\begin{pmatrix} a & 0 \\ 0 & b\end{pmatrix} \text{ or } \begin{pmatrix} 0 & c \\ d & 0\end{pmatrix}$$ with $ab=1$, resp.~$cd=-1$). We first observe that integral points are potentially dense in the subset of $X_{G, \underline{C}}(Y)$ consisting of representations that may be conjugated into one of these maximal Zariski-closed subgroups.
\begin{lemma}\label{lemma:non-dense-reps}
Let $Y$ be a smooth complex variety equipped with a smooth projective simple normal crossings compactification $\overline{Y}$, with $D=\overline{Y}\setminus Y$, and $D=\cup_{i=1}^n D_i$ the irreducible components of $D$. Let $G=\PGL_{2, \mathbb{Z}}$ or $G=\SL_{2, \mathbb{Z}}$ and fix a tuple of points $\underline{C}=(C_1, \cdots, C_n)\in (G/_{\text{ad}} G)^n(\mathscr{O}_K)$ for some number field $K$. Let $Z\subset X_{G, \underline{C}}(Y)_{\overline{\mathbb{Q}}}$ be the closed subscheme consisting of representations whose image is not Zariski-dense. There exists a number field $K'\supset K$ such that $Z(\mathscr{O}_{K'})$ is Zariski-dense in $Z$.
\end{lemma}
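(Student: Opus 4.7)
The plan is to stratify $Z$ by the type of the image. Since points of $X_{G,\underline{C}}(Y)(\overline{\mb Q})$ correspond to semisimple representations, and since the maximal proper Zariski-closed subgroups of $\SL_2$ (or $\PGL_2$) are Borel subgroups, normalizers of maximal tori, and finite subgroups, each $[\rho] \in Z(\overline{\mb Q})$ either (a) has image in a maximal torus $T$ (the \emph{abelian} locus $Z_{ab}$), or (b) has image in $N(T)$ but not in $T$ (the \emph{dihedral} locus $Z_{dih}$), possibly with finite image as a further specialization of either. I would first show that $Z = Z_{ab} \cup Z_{dih}$ is a finite union of subschemes each of which is, up to taking finite covers and extending the base, a torsor under an algebraic torus, and then invoke Dirichlet's unit theorem to produce Zariski-dense integral points in algebraic tori after extending $K$.

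For $Z_{ab}$: since $T$ is abelian, $\on{Hom}(\pi_1(Y), T) \cong \on{Hom}(H_1(Y,\mb Z), T)$ is a group scheme of multiplicative type, and the abelian locus is its quotient by the Weyl group $W = \mb Z/2$. The boundary condition $p_D([\rho]) = \underline{C}$ pins each $\chi(\gamma_i)$ to one of two values $a_i^{\pm 1}$ satisfying $a_i + a_i^{-1} = \tr(C_i)$ (resp.\ $\tr^2/\det$ for $\PGL_2$); since $\underline{C}$ is integral, each $a_i \in \overline{\mb Z}^{\times}$. After fixing one of these finitely many choices, $Z_{ab}$ is a torsor under the subtorus $T' = \on{Hom}(H_1(Y, \mb Z)/\langle \gamma_i \rangle, T)/W$. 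Any torsor under a torus over $K$ is trivialized over some finite extension, so after enlarging $K$ we may find an integral point; translating by $T'(\mc O_K)$, which is Zariski-dense in $T'$ for $K$ with enough units (by Dirichlet), gives the desired density.

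For $Z_{dih}$: a dihedral representation determines a surjection $\pi_1(Y) \to N(T)/T = \mb Z/2$, and hence a connected étale double cover $\tilde Y \to Y$, of which there are only finitely many (since $H_1(Y, \mb Z/2)$ is finite). For each such cover, $\rho|_{\pi_1(\tilde Y)}$ takes values in $T$ and satisfies $\sigma^* \tilde\chi = \tilde\chi^{-1}$, where $\sigma$ is the deck transformation. This realizes $Z_{dih}$ as a union of anti-invariant subsets of $Z_{ab}(\tilde Y)$ for an appropriate compactification of $\tilde Y$ with boundary data induced from $\underline{C}$ (as in the spirit of \autoref{lem:integral-conjugacy-class}), which are again torsors under subtori. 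The abelian argument then applies.

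The main obstacle is the careful bookkeeping in the dihedral case: one must verify that the boundary data for $\tilde Y$ remains integral, that the anti-invariant locus is indeed a torsor under a subtorus (the $(-1)$-eigenspace of $\sigma$ acting on the character group), and that the parametrization by double covers is compatible with the scheme structure on $Z$. The abelian step is conceptually straightforward, but the dihedral step requires one to track how the boundary components of $Y$ lift or split in $\tilde Y$, and to handle the finite-image representations that may lie on the boundary of several strata simultaneously.
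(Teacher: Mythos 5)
Your proposal follows the same overall strategy as the paper: classify the possible non-dense images, handle the abelian (torus) case by exhibiting the relevant locus as a torsor under a torus and invoking Dirichlet's unit theorem, and reduce the dihedral case to the abelian case via the index-two \'etale cover associated to the surjection $\pi_1(Y) \to N(T)/T \cong \mathbb{Z}/2$. However, there is a genuine gap in your case analysis. You claim that every $[\rho] \in Z(\overline{\mathbb{Q}})$ either has image in a maximal torus or in $N(T)$, with finite-image representations arising only as ``further specializations of either.'' This is false: the exceptional finite subgroups of $\PGL_2(\mathbb{C})$ (tetrahedral, octahedral, icosahedral) and their binary lifts in $\SL_2(\mathbb{C})$ are not contained in the normalizer of any torus. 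Representations with such image are semisimple, non-Zariski-dense, and lie in $Z$, yet they fall outside your $Z_{ab} \cup Z_{dih}$. The paper treats them as a separate case: if the generic point of a component of $Z$ has finite image, that component is an isolated point, and a finite-image representation is automatically defined over the ring of integers of some number field; so the case is immediate. Your argument as written simply does not address these components.

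A smaller issue: your description of the abelian stratum as ``a torsor under the subtorus $T' = \on{Hom}(H_1(Y,\mathbb{Z})/\langle \gamma_i\rangle, T)/W$'' misplaces the Weyl quotient --- $(\cdot)/W$ is not a torus, so this $T'$ cannot act simply transitively. One should instead prove density of integral points in the torsor $X_{T,\underline{C}}(Y)$ (which sits over the group $\on{Hom}(H_1(Y,\mathbb{Z}), T)$, a scheme of multiplicative type, hence a torus times a finite group) and then observe that the image under the finite map $X_T \to X_G$ inherits density. This is what the paper does. The flagged ``obstacles'' in your dihedral discussion (integrality of boundary data on $\tilde Y$, the anti-invariant locus being a torsor) are real bookkeeping points but not fundamental --- the paper's treatment via $\rho = \on{Ind}_{\pi_1(\tilde Y)}^{\pi_1(Y)}\tilde\rho$ is similarly terse --- whereas the omission of the exceptional finite image case is a logical hole that must be patched.
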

\begin{proof}
	Let $Z_i$ be an irreducible component of $Z$, and $\eta_i$ the generic point of $Z_i$. Let $\overline{\kappa(\eta_i)}$ be an algebraic closure of the residue field of $\eta_i$, and $\rho_i: \pi_1(Y)\to G(\overline{\kappa(\eta_i)})$ the corresponding representation. There are three cases:
	\begin{enumerate}
		\item $\rho_i$ has finite image: in this case, $\rho_i$ is already defined over the ring of integers of some number field and hence is Zariski-dense in $Z_i$, which is a point.
		\item $\rho_i$ has image contained in a Borel. In this case the same is true for all $[\rho]$ in $Z_i$. Each such $\rho$ is $S$-equivalent to a representation factoring through a maximal torus $T$ of $G$, i.e.~it corresponds to the same point of the character variety as such a representation.
			
		Any representation of $\pi_1(Y)$ into $T$ factors through $\pi_1(Y)^{\text{ab}}$, a finitely-generated Abelian group, say $\mathbb{Z}^r\oplus F$ with $F$ finite. The set of such is isomorphic to $T^r\times F^\vee$, where $F^\vee=\text{Hom}(F, T)$. Let $X_{T, \underline{C}}(Y)$ be the preimage of $X_{G, \underline{C}}(Y)$ in $X_{T}(Y)$, under the map induced by $T\hookrightarrow G$.
		
		The local monodromy condition is affine-linear on $T^r\times F^\vee$, whence $X_{T, \underline{C}}(Y)$ is a component of a torsor for a torus $T'$ times a finite group $W$. But any such admits a potentially dense set of integral points---simply enlarge $K$ so it splits this torsor, and so that integral points are dense in $T'\times W$; to obtain potential density of integral points, adjoin enough roots of unity to split $W$, enlarge $K$ to split $T$, and further enlarge it to have infinite unit group. 
		
		\item $\rho_i$ has image contained in the normalizer of a maximal torus, $D$. Note that $D$ has identity component a maximal torus $T$, and $D/T$ has order $2$, acting on $T$ by inversion; $D$ evidently splits as $T\rtimes \{\pm 1\}$. Let $$\psi: \pi_1(Y)\overset{\rho_i}{\longrightarrow} D(\overline{\kappa(\eta_i)})\to D/T(\overline{\kappa(\eta_i)})=\{\pm 1\}$$ be the composition of $\rho_i$ with the natural quotient map $D\to D/T$. Let $\tilde Y$ be the \'etale double cover of $Y$ corresponding to the kernel of $\psi$. For any $[\rho]$ in $Z_i$, $\rho|_{\pi_1(\tilde Y)}$ factors through $T$; denote by $\tilde\rho$ this (one-dimensional) representation $\pi_1(\tilde Y)\to T$.
		
		Now for $G=\SL_2$, we have $\rho=\text{Ind}_{\pi_1(\tilde Y)}^{\pi_1(Y)}\tilde \rho$. Hence potential density of integral points follows as in point (2) above---it suffices to prove density for representations $\pi_1(\tilde Y)\to T$, which we did above (replacing $Y$ with $\tilde Y$). For $G=\PGL_2$, $\rho=\mathbb{P}\text{Ind}_{\pi_1(\tilde Y)}^{\pi_1(Y)}\tilde \rho$, and the same argument suffices.
	\end{enumerate}
\end{proof}
\subsection{Lifting from $\PGL_2$ to $\SL_2$}
\begin{lemma}\label{lemma:lifting-reps}
	Let $\Gamma$ be a finitely-generated group, $K$ a number field, and $K'$ the compositum of all degree $2$ extensions of $K$ ramified only over the prime $2$. Let $\rho: \Gamma\to\PGL_2(\mathscr{O}_K)$ be a representation. Suppose there exists $\rho': \Gamma\to\SL_2(\mathbb{C})$ such that the composition $$\Gamma\overset{\rho'}{\longrightarrow}\SL_2(\mathbb{C})\to\PGL_2(\mathbb{C})$$ agrees with $$\Gamma\overset{\rho}{\longrightarrow}\PGL_2(\mathscr{O}_K)\to\PGL_2(\mathbb{C}).$$ Then $\rho$ lifts (up to conjugacy) to a representation $\Gamma\to\SL_2(\mathscr{O}_{K'})$.
\end{lemma}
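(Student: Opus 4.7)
The plan is to construct the lift $\tilde\rho$ explicitly by adjoining square roots of units. Pick a finite set of generators $g_1,\ldots,g_r$ of $\Gamma$ and arbitrary set-theoretic lifts $M_i \in \GL_2(\mathscr{O}_K)$ of $\rho(g_i) \in \PGL_2(\mathscr{O}_K)$. Since each $M_i$ is invertible over $\mathscr{O}_K$, the determinants $u_i := \det(M_i)$ are units. Fix square roots $\sqrt{u_i} \in \overline{\mathbb{Q}}$ and set $L := K(\sqrt{u_1},\ldots,\sqrt{u_r})$.

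The key arithmetic point is that $L \subseteq K'$. Each intermediate extension $K(\sqrt{u_i})/K$ is at most quadratic and the order $\mathscr{O}_K[\sqrt{u_i}]$ has relative discriminant $4u_i$ over $\mathscr{O}_K$; since $u_i$ is a unit, any prime of $K$ ramified in $K(\sqrt{u_i})$ must divide $4$ and hence lie above $2$. Thus $K(\sqrt{u_i}) \subseteq K'$ for each $i$, giving $L \subseteq K'$.

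The hypothesized $\mathbb{C}$-valued lift $\rho'$ must satisfy $\rho'(g_i) = \mu_i M_i$ for some $\mu_i \in \mathbb{C}^\times$ with $\mu_i^2 = u_i^{-1}$. After fixing an embedding $\overline{\mathbb{Q}} \hookrightarrow \mathbb{C}$, each $\mu_i = \epsilon_i/\sqrt{u_i}$ for a sign $\epsilon_i \in \{\pm 1\}$. Set $\lambda_i := \epsilon_i/\sqrt{u_i} \in \mathscr{O}_L^\times$ (a unit because its inverse $\epsilon_i\sqrt{u_i}$ is an algebraic integer) and $\tilde\rho(g_i) := \lambda_i M_i \in \SL_2(\mathscr{O}_L)$, extending to arbitrary words in the generators multiplicatively. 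To see this is well-defined, note that for any relation $w(g_1,\ldots,g_r) = 1$ in $\Gamma$, the matrix $w(\tilde\rho(g_1),\ldots,\tilde\rho(g_r)) \in \SL_2(\mathscr{O}_L)$ is carried by the fixed embedding $\mathscr{O}_L \hookrightarrow \mathbb{C}$ to $w(\rho'(g_1),\ldots,\rho'(g_r)) = I$, and injectivity of the embedding then forces the original matrix to already equal $I$ in $\SL_2(\mathscr{O}_L)$. This produces the desired $\tilde\rho \colon \Gamma \to \SL_2(\mathscr{O}_L) \subseteq \SL_2(\mathscr{O}_{K'})$, which reduces to $\rho$ modulo the center because $\tilde\rho(g_i) = \lambda_i M_i$ maps to $[M_i] = \rho(g_i)$.

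The only genuine input is the discriminant computation that forces $L \subseteq K'$; the rest is a concrete unwinding of the standard fact that the obstruction to lifting a $\PGL_2$-representation to $\SL_2$ lives in $H^2(\Gamma,\{\pm 1\})$ and vanishes as soon as a lift exists over any overring whose unit group contains $\{\pm 1\}$. I expect no serious obstacle.
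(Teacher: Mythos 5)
Your overall plan—write the lift explicitly rather than invoking an $H^2$ obstruction—is a perfectly good concretization of the paper's cohomological argument, and the well-definedness step (compare $w(\tilde\rho(g_i))$ with $w(\rho'(g_i))=I$ under a fixed embedding $\mathscr{O}_L\hookrightarrow\mathbb{C}$ and use injectivity) is fine and even sidesteps a small point the paper leaves implicit, namely that the obstruction class in $H^2(\Gamma,\{\pm1\})$ is the same computed over $\mathscr{O}_{K'}$ and over $\mathbb{C}$. However, the very first step has a genuine gap: you take ``arbitrary set-theoretic lifts $M_i\in\GL_2(\mathscr{O}_K)$ of $\rho(g_i)\in\PGL_2(\mathscr{O}_K)$,'' but such lifts need not exist. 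Since $\PGL_2=\GL_2/\mathbb{G}_m$ as an fppf quotient, one has an exact sequence $1\to \mathscr{O}_K^\times\to\GL_2(\mathscr{O}_K)\to\PGL_2(\mathscr{O}_K)\to\Pic(\mathscr{O}_K)$, and the boundary map lands in $\Pic(\mathscr{O}_K)[2]=\mathrm{Cl}(K)[2]$ and can be nontrivial. So $\rho(g_i)$ may simply fail to lift to $\GL_2(\mathscr{O}_K)$, and your whole arithmetic argument rests on $u_i=\det(M_i)$ being a unit.

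The fix essentially reinstates the observation the paper uses. Option (a): lift $\rho(g_i)$ to $M_i\in\GL_2(K)$ (always possible since $\Pic(K)=0$) and argue that for every prime $\mathfrak{p}\nmid 2$, $\rho(g_i)$ lifts to $\GL_2(\mathscr{O}_{K_\mathfrak{p}})$ because local rings have trivial Picard group; hence $v_\mathfrak{p}(\det M_i)$ is even for all $\mathfrak{p}\nmid 2$, so $K(\sqrt{\det M_i})/K$ is ramified only over $2$. One then still needs to check that the resulting $\lambda_i M_i$ actually lands in $\SL_2(\mathscr{O}_L)$ rather than merely $\SL_2(L)$, which follows because $\rho'(g_i)$ is a point of the pullback of the finite $\mathscr{O}_K$-scheme $\SL_2\times_{\PGL_2}\Spec\mathscr{O}_K$ and hence has entries in $\overline{\mathbb{Z}}$. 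Option (b), closer to the paper's phrasing: observe directly that the $\mu_2$-torsor $\rho(g_i)^*(\SL_2\to\PGL_2)$ over $\Spec\mathscr{O}_K$ is étale over $\mathscr{O}_K[1/2]$, so it is split by a quadratic extension $K_i/K$ unramified away from $2$, and the lift of $\rho(g_i)$ already has entries in $\mathscr{O}_{K_i}\subseteq\mathscr{O}_{K'}$. With either repair, the remainder of your proposal goes through.
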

\begin{proof}
	Choose generators $\gamma_1, \cdots, \gamma_n$ of $\Gamma$. For each $i$, $\rho(\gamma_i)$ lifts to $\SL_2(\mathscr{O}_{K'})$, as $\SL_{2, \mathbb{Z}}\to\PGL_{2,\mathbb{Z}}$ is finite flat of degree $2$, and \'etale away from the prime $2$. The obstruction to choosing such lifts such that one obtains an honest representation of $\pi_1(Y)$ lies in $H^2(\pi_1(Y), \{\pm 1\})$, but it vanishes by the assumption of the existence of $\rho'$.
\end{proof}

\subsection{The proof}
\begin{proof}[Proof of \autoref{thm:main-theorem}]
	As in the introduction, let $Y$ be a smooth complex variety equipped with a smooth projective simple normal crossings compactification $\overline{Y}$, with $D=\overline{Y}\setminus Y$, and $D=\cup_{i=1}^n D_i$ the irreducible components of $D$. Let $G=\PGL_{2, \mathbb{Z}}$ or $G=\SL_{2, \mathbb{Z}}$ and fix points $\underline{C}=(C_1, \cdots, C_n)\in (G/_{\text{ad}}G)^n(\mathscr{O}_K)$. We will show that there exists an extension $K'$ of $K$ such that $\mathscr{O}_{K'}$-points are Zariski-dense in $X_{G, \underline{C}}(Y)$.
	
	We first do this in the case $G=\PGL_{2, \mathbb{Z}}$. Let $W\subset X_{G, \underline{C}}(Y)_{\overline{\mathbb{Q}}}$ be an irreducible component. Let $\eta$ be the generic point of $W$ and $$\rho: \pi_1(Y)\to\PGL_2(\overline{\kappa(\eta)})$$ the corresponding representation. If $\rho$ is not Zariski-dense in $\PGL_2$, then integral points are dense in $W$ by \autoref{lemma:non-dense-reps}. We may thus assume $\rho$ has Zariski-dense image.
	
	Case 1: $W$ is zero-dimensional and $\underline{C}$ is quasi-unipotent. In this case, $W=\{[\rho]\}$, which is integral by \cite[Theorem 7.3]{corlette2008classification}.
	
	Case 2: $W$ is positive-dimensional or $\underline{C}$ is not quasi-unipotent. In this case, $\rho$ factors through a map $Y\to Z$, with $Z$ a orbicurve, by \cite[Theorem 1]{corlette2008classification} in the case $W$ is positive-dimensional and $\underline{C}$ is quasi-unipotent, and by \cite[Theorem A]{loray2016representations}  in the case $\underline{C}$ is not quasi-unipotent. By Stein factorization we may assume this map has connected fibers. The point $[\rho]$ is in the image of the induced map $X_{G, \underline{C}'}(Z)\to X_{G,\underline{C}}(Y)$ for appropriate $\underline{C}'$, by \autoref{lem:integral-conjugacy-class}. But there exists $K'$ such that $\mathscr{O}_{K'}$-points are Zariski-dense in $X_{G, \underline{C}'}(Z)$ by \autoref{theorem:pgl2-density} (here we use that relative character varieties of orbicurves are disjoint unions of relative character varieties of surface groups). Thus $[\rho]$ is in the Zariski-closure of the $\mathscr{O}_{K'}$-points of $W$; as $[\rho]$ is itself Zariski-dense in $W$, the proof is complete.
	
	We now consider the case $G'=\SL_{2, \mathbb{Z}}$; we still write $G=\PGL_{2, \mathbb{Z}}$. Let $\underline{C}'\in (G'/_{\text{ad}}G')^n(\mathscr{O}_K)$ be a tuple, and $\underline{C}$ its image in $(G/_{\text{ad}}G)^n(\mathscr{O}_K)$, and consider the map $X_{G', \underline{C}'}(Y)\to X_{G, \underline{C}}(Y)$. Let $W'$ be an irreducible component of $X_{G', \underline{C}'}(Y)_{\overline{\mathbb{Q}}}$ and $[\rho]$ its generic point. Again if $\rho$ is not Zariski-dense in $\SL_2$, then integral points are dense in $W'$ by \autoref{lemma:non-dense-reps}. 
	
	If $[\rho]$ is Zariski-dense, consider the image $W$ of $W'$ in $X_{G,\underline{C}}(Y)$. $W$ is an irreducible component of $X_{G,\underline{C}}(Y)$, so we have by the previous paragraph that $\mathscr{O}_{K'}$-points are dense in $W$ for some $K'$. After replacing $K'$ by a finite extension as in \autoref{lemma:lifting-reps}, all these points lift to $W'$, as the same is true for $\rho$ (here we use that the cohomological obstruction to lifting is constant on connected components).  As $W'\to W$ is finite (by e.g.~\cite[Theorem 1.1]{cotner2024morphisms}) we thus have that $[\rho]$ is in the closure of the $\mathscr{O}_{K'}$-points of $W'$, and the proof is complete.
\end{proof}

\bibliographystyle{alpha}
\bibliography{bibliography-rank-2}

\end{document}